\newtheorem{theorem}{\bf Theorem}[section]
\newtheorem{proposition}{\bf Proposition}[section]
\newtheorem{lemma}{\bf Lemma}[section]
\newtheorem{remark}{\bf Remark}[section]
\newtheorem{definition}{\bf Definition}[section]
\newcommand{\average}{{\mathchoice {\kern1ex\vcenter{\hrule
height.4pt width 8pt depth0pt}
\kern-11pt} {\kern1ex\vcenter{\hrule height.4pt width 4.3pt
depth0pt} \kern-7pt} {} {} }}
\newcommand{\ave}{\average\int}
\newcommand{\I}{\mathcal{I}}
\newcommand{\ad}{\mathrm{ad}}
\newcommand{\Airy}{\mathsf{A}}
\newcommand{\Dive}{\mathrm{Div}\,}
\newcommand{\de}{\bm{\delta}}
\newcommand{\sym}{\mathrm{sym}}
\newcommand{\ED}{\mathscr{ED}}
\newcommand{\WD}{\mathscr{WD}}
\newcommand{\defects}{\mathsf{D}}
\newcommand{\cof}{\mathrm{cof\,}}
\newcommand{\ep}{\varepsilon}
\newcommand{\C}{\mathbb{C}}
\newcommand{\R}{\mathbb{R}}
\newcommand{\N}{\mathbb{N}}
\newcommand{\E}{\mathcal{E}}
\newcommand{\spt}{\mathrm{supp}}
\newcommand{\ud}{\mathrm{d}}
\newcommand{\Div}{\mathrm{Div}}
\newcommand{\Huno}{\mathcal H^1}
\newcommand{\Ccal}{{\mathcal{C}}}
\newcommand{\dist}{\mathrm{dist}}
\newcommand{\nsc}{\mathrm{nsc}}
\newcommand{\ii}{\mathrm{in}}
\newcommand{\cc}{\mathrm{c}}
\newcommand{\G}{\mathcal{G}}
\newcommand{\ce}{\ep}
\newcommand{\argmin}{\operatorname*{argmin}}
\def\red#1{\textcolor{red}{#1}}
\def\XXint#1#2#3{{\setbox0=\hbox{$#1{#2#3}{\int}$}
     \vcenter{\hbox{$#2#3$}}\kern-.5\wd0}}
\def\@splitop#1#2\@nil{$\mathscr{#1}\!\!$\calligra#2\,\,}
\newcommand*\DeclareCursiveOperator[2]{%

 \newcommand#1{\mathop{\mbox{\@splitop#2\@nil}}\nolimits}}
\DeclareCursiveOperator{\Anew}{A}
\DeclareCursiveOperator{\Bnew}{B}
\DeclareCursiveOperator{\Cnew}{C}
\DeclareCursiveOperator{\Dnew}{D}
\DeclareCursiveOperator{\Enew}{E}
\DeclareCursiveOperator{\Qnew}{Q}
\DeclareCursiveOperator{\Tnew}{T}
\DeclareMathOperator{\curl}{curl}
\DeclareMathOperator{\Curl}{Curl}
\DeclareMathOperator{\CURL}{\textsc{curl}}
\DeclareMathOperator{\inc}{inc}
\DeclareMathOperator{\INC}{INC}
\DeclareMathOperator{\tr}{tr}
\title[Planar linearized elasticity with incompatible kinematics]{Variational formulation of planar linearized elasticity with incompatible kinematics
}
\author[Pierluigi Cesana]
{Pierluigi Cesana}
\address[Pierluigi Cesana]{Institute of Mathematics for Industry, Kyushu University, 744 Motooka, Fukuoka 819-0395, Japan}
\email[P. Cesana]{cesana@math.kyushu-u.ac.jp}
\author[Edoardo Fabbrini]
{Edoardo Fabbrini}
\address[Edoardo Fabbrini]{Graduate School of Mathematics, Kyushu University, 744 Motooka, Fukuoka 819-0395, Japan}
\email[E. Fabbrini]{fabbrini.edoardo.840@s.kyushu-u.ac.jp}
\author[Marco Morandotti]{Marco Morandotti}
\address[Marco Morandotti]{Dipartimento di Scienze Matematiche ``G.~L.~Lagrange'', Politecnico di Torino, Corso Duca degli Abruzzi, 24, 10129 Torino, Italy}
\email[M. Morandotti]{marco.morandotti@polito.it}
\date{\today}
\begin{document}

%%%% Keyword entries to be placed here %%%%
\keywords{Wedge Disclinations, Edge Dislocations, Incompatible Kinematics, Linearized Elasticity, Airy Stress Function}

%%%% Abstract text to be placed here %%%%%%%%%%%%
\begin{abstract}
We present a variational characterization of mechanical equilibrium in the planar strain regime for systems with incompatible kinematics. For  non-simply connected domains, we show that the equilibrium problem for a non-liftable strain-stress pair can be reformulated as a well-posed minimization problem for the Airy potential of the system. 
We characterize kinematic incompatibilities on internal boundaries as rotational or translational mismatches, in agreement with Volterra's modeling of disclinations and dislocations. Finally, we establish that the minimization problem for the Airy potential can be reduced to a finite-dimensional optimization involving cell formulas.
\end{abstract}
%%%%%%%%%%%%%%%%%%%%%%%%%%%

\subjclass{49J45,   %Existence theories in calculus of variations and optimal control, optimization; Methods involving semicontinuity and convergence; relaxation
%74Q05 %Homogenization, determination of effective properties in solid mechanics; Homogenization in equilibrium problems of solid mechanics
%35Q56 %Partial differential equations of mathematical physics and other areas of application; Ginzburg-Landau equations  
49J10,  %Existence theories in calculus of variations and optimal control, Existence theories for free problems in two or more independent variables
74B15.   %Elastic materials; Equations linearized about a deformed state (small deformations superposed on large)
}

%%%%%%%%%% Insert the texts which can accomdate on firstpage in the tag "fmtext" %%%%%

%\section{Insert A head here}
%%%%% Insert A head here
%
%This demo file is intended to serve as a ``starter file''
%for rsproca journal papers produced under \LaTeX\ using
%rsproca\_new.cls v1.0.
%
%\subsection{Insert B head here}
%%%%% Insert B head here
%Subsection text here.
%
%\subsubsection{Insert C head here}
%%%%% Insert C head here
%Subsubsection text here.
%
%\section{Equations}
%
%Sample equations.
%
%%%% Numbered equation
%\begin{align}\label{1.1}
%\begin{split}
%\frac{\partial u(t,x)}{\partial t} &= Au(t,x) \left(1-\frac{u(t,x)}{K}\right)-B\frac{u(t-\tau,x) w(t,x)}{1+Eu(t-\tau,x)},\\
%\frac{\partial w(t,x)}{\partial t} &=\delta \frac{\partial^2w(t,x)}{\partial x^2}-Cw(t,x)+D\frac{u(t-\tau,x)w(t,x)}{1+Eu(t-\tau,x)},
%\end{split}
%\end{align}
%
%\begin{align}\label{1.2}
%\begin{split}
%\frac{dU}{dt} &=\alpha U(t)(\gamma -U(t))-\frac{U(t-\tau)W(t)}{1+U(t-\tau)},\\
%\frac{dW}{dt} &=-W(t)+\beta\frac{U(t-\tau)W(t)}{1+U(t-\tau)}.
%\end{split}
%\end{align}
%
%%%%% Unnumbered equation
%\begin{eqnarray}
%\frac{\partial(F_1,F_2)}{\partial(c,\omega)}_{(c_0,\omega_0)} = \left|
%\begin{array}{ll}
%\frac{\partial F_1}{\partial c} &\frac{\partial F_1}{\partial \omega} \\\noalign{\vskip3pt}
%\frac{\partial F_2}{\partial c}&\frac{\partial F_2}{\partial \omega}
%\end{array}\right|_{(c_0,\omega_0)}\notag\\
%=-4c_0q\omega_0 -4c_0\omega_0p^2 =-4c_0\omega_0(q+p^2)>0.
%\end{eqnarray}

%%%%%%%%%%%%%%% End of first page %%%%%%%%%%%%%%%%%%%%%

\maketitle

\section{Introduction}
The study of compatibility conditions in elasticity  has played a fundamental role in the mathematical formulation of continuum mechanics, with roots tracing back to the mid-19th century through the works of Beltrami, Saint Venant, Michell, among others. These conditions guarantee that the mechanical strains within a material are consistent with the assumed displacement field, which is typically considered to be single-valued. 
Historically, these conditions were first examined in simply connected domains, such as
$\mathbb{R}^3$, where the elasticity theory was initially developed. 
In contrast, non-simply connected domains, which possess topological features like voids or holes, introduce significant complexity to the problem. In such domains,  the strain and displacement fields must be carefully examined to ensure that they are compatible with the topology of the material,  namely complementary loop integral conditions around each hole must be imposed in addition to the usual local compatibility condition.

The importance of understanding elasticity in non-simply connected settings has grown significantly with the emergence of modern applications, including the modeling of porous materials, fracture mechanics, additive manufacturing (3D printing), and materials with complex microstructures, such as metamaterials. These materials frequently exhibit internal loops, voids, and dislocation structures that demand compatibility conditions capable of accounting for nontrivial topology.

A systematic mathematical treatment of compatibility in non-simply connected domains has been developed in recent years, particularly through the work of Yavari and collaborators \cite{Angoshtari2016,Yavari2013,Yavari2020,Yavari12,yavari13}.
In the nonlinear setting, their analysis addresses compatibility conditions for both the deformation gradient $F$ and the right Cauchy-Green strain tensor  
$C = F^\top F$ \cite{Yavari2013} (see also \cite{Yavari2020}),
while related work by Acharya addressed compatibility conditions formulated in terms of the left Cauchy-Green tensor $B=FF^\top$  \cite{Acharya99a}. Further developments have extended these ideas to $L^2$ deformation fields in multiphase materials with internal voids, highlighting the interplay between compatibility conditions and material microstructure \cite{Angoshtari2016}. 

In this paper, we revisit the classical setting of linearized elasticity in two dimensions, focusing on non-simply connected planar domains.
In this context, the classical question of determining the necessary and sufficient conditions for the existence of a single-valued displacement field~$u$
such that the compatible strain tensor $\epsilon$ is given by $\nabla^{\sym}u=\frac12(\nabla u + (\nabla u)^\top)$  was first addressed by Michell in 1899 \cite{Michell}.
Our goal is to establish a rigorous variational formulation to characterize the mechanical equilibrium conditions in non-simply connected domains, by extending Michell's results to non-compatible strain fields.
Specifically, we identify the equations that characterize displacement fields that are not globally single-valued on the boundaries of the domain, when the system is in mechanical equilibrium.

We rigorously demonstrate that the violation of these compatibility conditions gives rise to precisely two types of kinematic incompatibility: translational and rotational. Importantly, we show that the translational incompatibility corresponds to the classical distortion induced by an edge dislocation, the canonical translational lattice defect introduced by Volterra in his seminal 1907 paper \cite{V07} (see also \cite{W68,RV92,Z97}; {see also \cite{Delphenich} for a recent translation of Volterra's paper into English}). The rotational incompatibility, on the other hand, corresponds to a wedge disclination, which reflects rotational irregularities caused by the failure of rotation closure around a loop in the crystal lattice in the undeformed configuration.

Let $\Omega\subset\R^2$ be a bounded, open, and simply connected domain, with $J$ dislocations and $K$ disclinations located at $x^1,\ldots,x^J\in\Omega$ and $y^1,\ldots,y^K\in\Omega$, respectively.
Let $\varepsilon>0$ be sufficiently small so that the disks $B_\varepsilon(x^j)$ and $B_\varepsilon(y^k)$ are mutually disjoint and all contained in~$\Omega$, let  $\sigma=\C\epsilon$ denote the  mechanical stress, and let $v$ denote the Airy stress function of the system, linked to $\sigma$ via the classical relations
$\sigma_{11}=v_{x_2x_2}$, $\sigma_{12}=\sigma_{21}= -v_{x_1x_2}$, $ \sigma_{22}=v_{x_1x_1}$\,.
Our main result, Theorem~\ref{2502282100}, establishes the following equivalence: the pair $\epsilon-\sigma$ solves
\begin{equation}\label{2503041822}
\begin{cases}
\Dive\sigma = 0 & \text{in $\Omega_{\varepsilon}$\,,}\\	
\sigma n = 0 & \text{on $\partial\Omega_{\varepsilon}$\,,} \\[2mm]
\curl\Curl\epsilon = 0 & \text{in $\Omega_{\varepsilon}$\,,} \\
\displaystyle \int_{\partial B_\varepsilon^k} (\epsilon_{rq,c}-\epsilon_{qc,r})\,\ud x_q = s^k  & \text{for $k = 1,\ldots, K$,} \\[3mm]
\displaystyle \int_{\partial B_\varepsilon^j} [\epsilon_{rc}-x_q(\epsilon_{rc,q}-\epsilon_{cq,r})]\,\ud x_c = b^j_r & \text{for $j = 1,\ldots, J$ and $r=1,2$}
\end{cases}
\end{equation}
where $s^k$ ($k=1,\ldots,K$) are the Frank angles of the disclinations and $b^j$ ($j=1,\ldots,J$) are the Burgers vectors of the dislocations, if and only if $v$ minimizes the functional $\I\colon H^2(\Omega_\varepsilon)\to\R$ defined by
\begin{equation*}\label{}
\begin{split}
\I(v)\coloneqq &\,\frac12\frac{1+\nu}{E} \int_{\Omega_{\varepsilon}} \big[|\nabla^2 v|^2-\nu(\Delta v)^2\big]\,\ud x+\sum_{j=1}^J %\frac{1}{2\pi\ce}
\ave_{\partial B_{\ce}^j} \langle\nabla v, \Pi(b^j)\rangle\,\ud\Huno
   +\sum_{k=1}^K %\frac{s^k}{2\pi\ce}
   \ave_{\partial B_\ce^k} s^k v\,\ud\Huno\,,
   \end{split}
\end{equation*}
where $\Omega_\varepsilon=\Omega\setminus\big((\cup_{j=1}^J \overline{B}_\varepsilon(x^j))\cup(\cup_{k=1}^K \overline{B}_\varepsilon(y^k))\big)$, in a suitably defined functions space that accounts for stress-free boundary conditions.
More specifically, the minimization of $\I$ is taken over a set of $H^2$-Sobolev functions, whose traces on the internal boundaries $\partial B_\varepsilon^j$ and $\partial B_\varepsilon^k$ are parametrized by coefficients of undetermined affine functions. These trace conditions ensure zero normal stress on these internal boundaries. This set was introduced in \cite{CDLM2024}, and these boundary conditions are fully consistent with Michell’s result \cite{Michell}.

This equivalence establishes a direct correspondence between the mechanical equilibrium conditions expressed in terms of stress and strain tensors and a variational minimization problem formulated in terms of the Airy stress function.

This functional $\mathcal{I}$ consists of two parts: a bulk term, representing the elastic energy stored in the system; and a finite sum of surface integrals, that quantifies the work required to create kinematic incompatibilities at the boundaries of the holes.
We emphasize that, in line with classical defect theory, Volterra disclinations and dislocations emerge from~$\mathcal{I}$ only in the limit as $\varepsilon \to 0$. 
In this asymptotic limit, the forcing term for disclinations converges to a Dirac delta supported at the center of the disclination core, modulated by the Frank angle~$s^k$.
Similarly, the forcing term for dislocations converges to a term proportional to partial derivatives of a Dirac delta, modulated by the Burgers vector~$b^j$.
Consequently, the model $\mathcal{I}$ studied in this paper for fixed $\varepsilon > 0$ should be regarded as a \textit{regularized} model, where the singularities are replaced by smooth boundary integrals, and the fully singular defect structure emerges only in the limit for $\varepsilon\to0$.

This regularization framework, know in the literature as the \emph{core radius approach}, is consistent with the earlier asymptotic analysis of Cermelli and Leoni for dislocations \cite{CermelliLeoni06}, later generalized by \cite{CDLM2024} for disclinations, and is directly inspired by the extensive literature on Ginzburg--Landau vortices, including the work of Bethuel, Brezis, and H\'elein \cite{BBH1994}.
We emphasize that the core radius approach is not merely a mathematical abstraction (see \cite{Peierls40}, see also the discussion in the introduction to \cite{ContiGarroniOrtiz15}). The core represents a region surrounding the dislocation line where the material undergoes significant distortion and where classical elasticity theory breaks down. The core can be experimentally observed and its size is typically estimated on the order of 1 to 10 Burgers vectors, which corresponds to a range of a few nanometers.

In the system \eqref{2503041822}, the first two equations describe the conservation of linear momentum in the absence of body forces, for a material with zero boundary stress. The final three conditions govern the kinematics, characterizing the displacement and strain incompatibilities. In particular, when 
$s^k=0$, $b^j=0$, these conditions reduce to the classical compatibility conditions derived by Yavari \cite{Yavari2013}, which are necessary and sufficient for the existence of a single-valued displacement field in non-simply connected domains $\Omega_\varepsilon$\,.
We further note that, in the case of simply connected domains, the final two line integrals disappear, and the system reduces to the classical Saint-Venant compatibility condition, $\curl\Curl\epsilon = 0$, that is necessary and sufficient to guarantee kinematic compatibility in the absence of defects.

We emphasize that, in this paper, we specifically focus on incompatibility arising from violations of single-valuedness of the displacement field along the boundaries of holes, 
$\partial B_{\ce}^j$ or $\partial B_{\ce}^k$.
As a result, the material occupying the open set 
$\Omega_{\varepsilon}$  is assumed to be free of bulk defects.
For a complementary analysis addressing incompatibility due to bulk disclinations and dislocations, we refer to \cite{CDLM2024}.

Finally, we highlight that, relying on our variational formulation and leveraging on the linearity of the problem and on some properties of the Monge-Amp\`{e}re operator, we characterize the minimum point of the functional $\I$ by explicit cell formulas, which result from an auxiliary finite-dimensional minimization problem. 
This has the two-fold advantage of complying with homogeneous Neumann boundary conditions in the internal boundaries and of reducing the computational complexity.

The paper is structured as follows. In Section \ref{sec_prel}, we provide background on kinematic compatibility in linearized elasticity and the modeling of disclinations and dislocations. Our main results are presented in Section \ref{202503121428}. We conclude the paper with an Appendix containing technical results and proofs.

\section{Preliminaries}\label{sec_prel}
We recall the standard framework of linearized elasticity, considering infinitesimal kinematics in the planar strain regime. Throughout this work, our reference domain is given~by
\begin{equation}\label{eq_Omega}
\text{$\Omega\subset\R^2$, a bounded, open, simply connected set.}
\end{equation}
Let $u\colon \Omega\to\mathbb{R}^2$ be the displacement field. 
Let $\epsilon\coloneqq\nabla^{\sym} u=\frac{1}{2}(\nabla u+(\nabla u)^\top)$ be the $2\times2$  symmetric strain tensor and let $\sigma$ be the $2\times2$ symmetric  stress tensor; we will consider a linear dependence of $\sigma$ on $\epsilon$, in such a way that the application $(\epsilon,\sigma)\mapsto \sigma:\epsilon$ be a bilinear, positive-definite, symmetric form.
One way to model this is to consider the operator 
$$\R^{2\times2}\ni m\mapsto\underline{m}\coloneqq
\begin{pmatrix}
m_{11} & m_{12} & 0\\
m_{21} & m_{22} & 0\\
0 & 0 & 0
\end{pmatrix}\in \R^{3\times3}$$
that completes a $2\times2$ matrix into a $3\times3$ matrix, and to consider the elasticity tensor $\mathbb{C} \colon \mathbb{R}^{3\times3}_{\sym} \to \mathbb{R}^{3\times3}_{\sym}$ of three-dimensional linearized elasticity with the request that it enjoys the major and minor symmetries for $\underline{\sigma}:\underline{\epsilon} \coloneqq \C\underline{\epsilon}:\underline{\epsilon}$ to be a bilinear, positive-definite, symmetric form.

Throughout the paper, we will make the assumption that whenever we write $\C m$ (with $m\in\R^{2\times2}$) we mean $\C\underline{m}$. In the context of plane strain elasticity, though, there is a well-known expression for $\sigma\in\R^{2\times2}$ as a function of $\epsilon\in\R^{2\times2}$, which is the following
\begin{subequations}\label{eq_epsilonsigma}
\begin{equation}\label{eq_epsilonsigma-1}
\sigma=\mathbb{C}\epsilon=\frac{E\nu}{(1-\nu)(1-2\nu)}\cof\epsilon+\frac{E}{1-2\nu}\epsilon
\end{equation}
together with its inverse
\begin{equation}\label{eq_epsilonsigma-2}
\epsilon=\mathbb{C}^{-1}\sigma=\frac{1-\nu^2}{E}\sigma-\frac{(1+\nu)\nu}{E}\cof\sigma\,,
\end{equation}
\end{subequations}
where $E>0$ is the Young's modulus and $\nu\in(-1,1/2)$ is the Poisson's ratio. 
In \eqref{eq_epsilonsigma}, the cofactor operator $\cof\colon\R^{2\times2}\to\R^{2\times2}$ acts on $2\times2$ matrices in the following way
$$\begin{pmatrix}
m_{11} & m_{12} \\
m_{21} & m_{22}
\end{pmatrix}=m\mapsto \cof(m)=
\begin{pmatrix}
m_{22} & -m_{21} \\
-m_{12} & m_{11}
\end{pmatrix},$$
and notice that the operations in \eqref{eq_epsilonsigma} preserve the symmetry of the $2\times2$ matrices involved.
Still in the context of plane strain elasticity, the Poisson effect might arise so that we will always consider
\begin{equation}\label{eq_sigmaPoisson}
\C\underline{\epsilon}\eqqcolon\underline{\sigma} = \begin{pmatrix}
\sigma_{11} & \sigma_{12} & 0 \\
\sigma_{12} & \sigma_{22} & 0 \\
0 & 0 & \sigma_{33}
\end{pmatrix},
\end{equation}
where $\sigma_{33}\neq 0$. 
To summarize, the matrices we will deal with (including their $3\times3$ extensions) are 
$$
\epsilon=\begin{pmatrix}
\epsilon_{11} & \epsilon_{12} \\
\epsilon_{12} & \epsilon_{22}
\end{pmatrix}
\,\,\text{and}\,\,
\sigma=\begin{pmatrix}
\sigma_{11} & \sigma_{12} \\
\sigma_{12} & \sigma_{22}
\end{pmatrix};
\underline{\epsilon}=\begin{pmatrix}
\epsilon_{11} & \epsilon_{12} & 0\\
\epsilon_{12} & \epsilon_{22} & 0\\
0 & 0 & 0
\end{pmatrix}
\,\,\text{and}\,\,
\underline{\sigma}=\begin{pmatrix}
\sigma_{11} & \sigma_{12} & 0\\
\sigma_{12} & \sigma_{22} & 0\\
0 & 0 & \sigma_{33}
\end{pmatrix}
$$
and we will write $\sigma=\C \epsilon$ and $\underline{\sigma}=\C \underline{\epsilon}$ indifferently (only in the latter case we will consider the presence of $\sigma_{33}$).
Notice that the equality 
$\sigma:\epsilon = \underline{\sigma}:\underline{\epsilon}$
holds true, as an immediate computation reveals.

The elastic energy for a planar body in planar strain regime reads
\begin{equation}\label{eq_energysigma}
\mathcal{F}(\sigma;\Omega)\coloneqq \frac{1}{2}\frac{1+\nu}{E} \int_\Omega \big[|\sigma|^2-\nu(\tr(\sigma))^2\big]\,\ud x\,.
\end{equation}
If the body is in mechanical equilibrium in absence of body forces, then the \emph{equilibrium equation}, i.e., the balance of linear momentum, $\Div\sigma=0$ in $\Omega$ must be satisfied.
The Airy stress function method assumes the existence of the \emph{Airy potential} or \emph{Airy stress function} $v\colon\Omega\to\mathbb{R}$ in terms of which the stress can be written as
\begin{subequations}\label{eq_Airyoperatorgen}
\begin{equation}\label{eq_Airyoperator1}
\sigma=\sigma[v]=\Airy(v),
\end{equation}
where $\Airy\colon\mathcal{C}^k(\Omega)\to\mathcal{C}^{k-2}(\Omega;\mathbb{R}^{2\times2}_{\sym})$ is defined by
\begin{equation}\label{eq_Airyoperator}
\Airy(v)\coloneqq\cof(\nabla^2 v)=\begin{pmatrix}
v_{x_2x_2} & -v_{x_1x_2} \\
-v_{x_2x_1} & v_{x_1x_1}
\end{pmatrix}.
\end{equation}
\end{subequations}
The main advantage of introducing such a function $v$ is that, by virtue of \eqref{eq_Airyoperator}, the equilibrium equation 
\begin{equation}\label{eq_Div_sigma}
\Div\sigma[v]=\Div(\cof(\nabla^2 v))=0
\end{equation}
is automatically satisfied as an identity at all points $x\in\Omega$.
Moreover, the elastic energy \eqref{eq_energysigma} is now expressed as 
\begin{equation}\label{eq_energyv}
\mathcal{G}(v;\Omega) \coloneqq \mathcal{F}(\sigma[v];\Omega)=\frac12\frac{1+\nu}{E} \int_\Omega \big[|\nabla^2 v|^2-\nu(\Delta v)^2\big]\,\ud x.
\end{equation}

\subsection{Compatibility in non-simply connected domains: stress-strain formulation}
For a displacement $u\colon\Omega\to\mathbb{R}^2$ of class $\mathcal{C}^3$, we have that 
\begin{equation}\label{eq_SVincstrong}
\inc\epsilon\coloneqq \curl\Curl\epsilon=0\,,
\end{equation}
as it can be easily verified by a simple computation.
Here recall for a vector field $V=(V^1,V^2)\colon\Omega\to\R^2$, we define $\curl V\coloneqq \partial_{x_1}V^2-\partial_{x_2} V^1$, and for a $2\times2$ matrix $M$, we define $\Curl M\coloneqq (\curl M_1,\curl M_2)$, $M_r$ being the $r$-th row of $M$.\footnote{
An alternative way to carry out the computations is to perform them on the trivial extension $\underline{u}$ of $u$ to three dimensions and use the $\CURL$ operator defined row-wise on $3\times3$ matrices. First, let $(x_1,x_2,x_3)\mapsto \underline{u}(x_1,x_2,x_3)\coloneqq (u^1(x_1,x_2),u^2(x_1,x_2),0)$, then construct the corresponding $\underline{\epsilon}=\nabla^{\sym}\underline{u}\in\mathbb{R}^{3\times3}$, and finally apply twice $\CURL$ in the sense of matrices. In formulas, we have, for a matrix $M\in\mathbb{R}^{3\times3}$, that $(\CURL M)_{rs} \coloneqq \varepsilon_{rpm}\partial_{x_p}M_{sm}$ ($\epsilon_{rpm}$ being the Levi-Civita alternating symbol), revealing that $(\CURL M)_{rs}=(\CURL M_s)^r$.
Therefore, in our case we have
$$\CURL \underline{\epsilon}=\begin{pmatrix}
0 & 0 & \curl \epsilon_1 \\
0 & 0 & \curl \epsilon_2 \\
0 & 0 & 0
\end{pmatrix}\qquad\text{and}\qquad
\INC\underline{\epsilon}\coloneqq\CURL\CURL\underline{\epsilon}=\begin{pmatrix}
0 & 0 & 0 \\
0 & 0 & 0 \\
0 & 0 & \curl\Curl\epsilon
\end{pmatrix},$$
showing that, in the two-dimensional case, there holds $\inc \epsilon=(\INC\underline{\epsilon})_{33}$.}
The operator $\inc$ is called the \emph{incompatibility operator} and returns the zero value when the displacement is \emph{compatible}, i.e., it is single-valued.
The celebrated \emph{Saint-Venant principle} \cite{SV1855} states the converse: if \eqref{eq_SVincstrong} holds, then there must exist a field $u\in\mathcal{C}^3(\Omega;\R^{2})$ such that $\epsilon=\nabla^{\sym}u$.
We refer the reader to \cite{ACGK,ciarlet05,Geymonat09} for extensions of this result to Sobolev spaces on simply connected domains~$\Omega$; we resume the result in the following proposition.
\begin{proposition}\label{prop_weakSV}
Let $\Omega\subset\R^2$ be as in \eqref{eq_Omega} and let $\epsilon\in L^2(\Omega;\R^{2\times 2}_{\sym})$. 
Then \eqref{eq_SVincstrong} holds in $H^{-2}(\Omega)$ if and only if there exists $u\in H^1(\Omega;\R^2)$ such that $\epsilon =\nabla^{\sym}u$. Moreover, $u$ is unique
up to rigid motions.
\end{proposition}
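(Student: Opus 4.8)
The plan is to prove this weak Saint-Venant theorem by treating the two implications separately, the forward direction being the substantive one.

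For the sufficiency, suppose $\epsilon=\nabla^{\sym}u$ with $u\in H^1(\Omega;\R^2)$. The map $u\mapsto\inc(\nabla^{\sym}u)$ is a constant-coefficient, third-order linear differential operator; written out on a smooth field it equals $\partial_{x_1}^2(\partial_{x_2}u^2)-\partial_{x_1}\partial_{x_2}(\partial_{x_1}u^2+\partial_{x_2}u^1)+\partial_{x_2}^2(\partial_{x_1}u^1)$, which vanishes identically by Schwarz's theorem. A constant-coefficient operator that annihilates every $C^\infty$ field has zero symbol (test on exponentials), hence annihilates every distribution; in particular $\inc\epsilon=0$, and this is meaningful in $H^{-2}(\Omega)$ since $\Curl\epsilon\in H^{-1}(\Omega;\R^2)$ and $\inc\epsilon\in H^{-2}(\Omega)$ whenever $\epsilon\in L^2$.

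For the necessity, assume $\inc\epsilon=\curl(\Curl\epsilon)=0$ in $H^{-2}(\Omega)$. I would build $u$ by two applications of a Poincaré-type lemma on the simply connected domain $\Omega$. Since $\Curl\epsilon\in H^{-1}(\Omega;\R^2)$ is curl-free, the first application produces a scalar $q\in L^2(\Omega)$, unique up to an additive constant, with $\nabla q=\Curl\epsilon$, i.e.\ $\partial_{x_r}q=\curl\epsilon_r$ for $r=1,2$ (morally, $q=\tfrac12\curl u$ is the infinitesimal rotation). Let $\omega\in L^2(\Omega;\R^{2\times2})$ be the skew-symmetric field with $\omega_{12}=-\omega_{21}=-q$ and set $\gamma\coloneqq\epsilon+\omega$. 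A one-line computation using $\partial_{x_r}q=\curl\epsilon_r$ shows that each row satisfies $\curl\gamma_r=\curl\epsilon_r-\partial_{x_r}q=0$ in $H^{-1}(\Omega)$. Applying the Poincaré lemma again, now to each $L^2$ curl-free row $\gamma_r$, yields $u^r\in H^1(\Omega)$ with $\nabla u^r=\gamma_r$; then $u\coloneqq(u^1,u^2)\in H^1(\Omega;\R^2)$ satisfies $\nabla u=\epsilon+\omega$, and since $\omega$ is skew-symmetric we get $\nabla^{\sym}u=\epsilon$. For uniqueness, if $u,\tilde u$ both yield $\epsilon$ then $w\coloneqq u-\tilde u$ has $\nabla^{\sym}w=0$; differentiating $\partial_iw^j+\partial_jw^i=0$ and cycling the indices gives $\partial_i\partial_jw^k=0$ in $\mathcal D'(\Omega)$, so, $\Omega$ being connected, $w$ is affine with constant skew-symmetric gradient, i.e.\ an infinitesimal rigid motion.

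The main obstacle is the Poincaré-type lemma itself, in the two forms just used: (i) a curl-free field in $H^{-1}(\Omega;\R^2)$ over a bounded, simply connected, Lipschitz domain is the gradient of an $L^2(\Omega)$ function; (ii) a curl-free field in $L^2(\Omega;\R^2)$ is the gradient of an $H^1(\Omega)$ function. Form (ii) is the classical line-integral Poincaré lemma on simply connected domains (smooth case plus density), and (i) follows from it together with Ne\v{c}as's lemma (a distribution whose full gradient lies in $H^{-1}$ belongs to $L^2$, with the attendant estimate). These are precisely the Sobolev-space extensions of Saint-Venant's principle established in \cite{ACGK,ciarlet05,Geymonat09}, to which I would appeal for steps (i)--(ii); everything else is the elementary bookkeeping above.
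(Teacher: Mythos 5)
Your argument is correct in substance, but note that the paper itself does not prove this proposition: it is stated as a summary of the Sobolev-space Saint-Venant results in the cited references \cite{ACGK,ciarlet05,Geymonat09}, so there is no ``paper proof'' to match against. What you have written is essentially a reconstruction of the standard argument from those very references: sufficiency by the pointwise/distributional identity $\inc(\nabla^{\sym}u)=0$, and necessity by first integrating the curl-free field $\Curl\epsilon\in H^{-1}(\Omega;\R^2)$ to an infinitesimal rotation $q\in L^2(\Omega)$ (this is the Ne\v{c}as-lemma step), then integrating the row-wise curl-free field $\epsilon+\omega\in L^2$ to $u\in H^1$, with uniqueness up to rigid motions from $\partial_i\partial_j w^k=\partial_i e_{jk}+\partial_j e_{ik}-\partial_k e_{ij}=0$. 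The bookkeeping (signs of $\omega$, the identity $\curl\gamma_r=\curl\epsilon_r-\partial_{x_r}q$, the symbol argument for the forward direction) all checks out. One caveat you should make explicit: the two Poincar\'e-type lemmas you invoke --- in particular the $H^{-1}$ version resting on Ne\v{c}as's lemma, and the conclusion $u\in L^2(\Omega)$ from $\nabla u\in L^2(\Omega)$ --- require at least a Lipschitz boundary, whereas \eqref{eq_Omega} only declares $\Omega$ bounded, open, and simply connected; this mismatch is inherited from the paper (the cited references assume Lipschitz regularity), but since you are writing the proof rather than citing it, you should either add the Lipschitz hypothesis or say that $\Omega$ is tacitly assumed regular enough for the Ne\v{c}as lemma to apply.
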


The Saint-Venant principle requires the simple connectedness of the domain $\Omega$ to work; at the very core of it, there is the application of the Poincaré Lemma, which cannot dispensate of this topological condition. 
To treat the case of non simply connected planar domains $\Omega\subset\R^2$, in which the $\inc=\curl\Curl$ operator cannot be inverted, additional compatibility conditions have been identified that are both necessary and sufficient to establish the Saint-Venant principle.
We are looking, here, at domains $\Omega^{\nsc}\subset\R^2{}$ of the kind
\begin{equation}\label{eq_Omega_nsc}
\Omega^{\nsc}=\Omega\setminus\bigcup_{i=1}^N \overline\Omega^i\,,
\end{equation}
where $\Omega\subset\R^2$ is as in \eqref{eq_Omega} 
%a simply connected domain 
and the open sets $\Omega^i\subset\Omega$ ($i=1,\ldots,N$) are the pairwise well disjoint (i.e., $\overline\Omega^{i_1}\cap\overline\Omega^{i_2}=\emptyset$ for $i_1\neq i_2$) ``holes'' that make the domain non simply connected. 

According to \cite[Proposition 2.8, Eqs. (2.44) and (2.45)]{Yavari2013}, the necessary and sufficient kinematic compatibility conditions that the strain $\epsilon$ must satisfy are:
\begin{enumerate}[(i)]
\item the Saint-Venant condition \eqref{eq_SVincstrong} in the bulk, namely
\begin{equation}\label{202502151537}
\inc \epsilon=0\qquad\text{in $\Omega^{\nsc}$;}
\end{equation}
\item the additional conditions on the internal boundaries
\begin{equation}\label{202308101409}
\begin{cases}
\displaystyle \int_{\gamma^i} \sum_{q=1}^2 (\epsilon_{rq,c}-\epsilon_{qc,r})\,\ud x_q=0 & \text{for $i=1,\ldots,N$, and $c,r=1,2%\beta_1(\Omega)
$.}\\[5mm]
\displaystyle \int_{\gamma^i} \sum_{c,q=1}^2 [\epsilon_{rc}-x_q(\epsilon_{rc,q}-\epsilon_{cq,r})]\,\ud x_c=0 & \text{for $i=1,\ldots,N$, and $r=1,2%\beta_1(\Omega)
$,}
\end{cases}
\end{equation}
\end{enumerate}
Regarding the first equation in \eqref{202308101409}, we observe that the integrand vanishes identically when $c=r=1$ and when $c=r=2$ (by the symmetry of $\epsilon$) and that the integrand for $c=1$ and $r=2$ is the opposite of that for $c=2$ and $r=1$; thus, out of the four equations contained in \eqref{202308101409}$_1$, only one is meaningful.
Notice that the second equation in \eqref{202308101409} actually contains two equations, corresponding to the two possible values of $r\in\{1,2\}$. 

To describe the mechanical equilibrium of the system, conditions 
\eqref{202502151537} and \eqref{202308101409} should be complemented by
\begin{enumerate}
\item[(iii)] $\Div\sigma=\rho$  in $\Omega^{\nsc}$,
\item[(iv)] $\sigma n=\sigma_o$ on $\partial\Omega^{\nsc},$
\end{enumerate}
where $n$ is the outer unit normal to $\partial\Omega^{\nsc}=\cup_{i=0}^N \partial\Omega^i\eqqcolon \cup_{i=0}^N \gamma^i$.
Here we have defined $\gamma^0=\partial\Omega^0\coloneqq\partial\Omega$; notice that $\partial\Omega^{\nsc}$ is the (disjoint) union of $N+1$ Jordan curves.

Conditions (i)--(iv) describe the equilibrium configurations of a body occupying a domain $\Omega^{\nsc}$    
subject to body forces $\rho\colon\Omega^{\nsc}\to\mathbb{R}^2$
and surface tension $\sigma_o\colon\partial\Omega^{\nsc}\to\mathbb{R}^2$.
In what follows, we will see that equilibrium configurations in our problems for incompatible kinematics are attained for $\rho=0$ and $\sigma_o=0$ and non-trivial solutions will emerge due to kinematic incompatibility violating~\eqref{202308101409}. 

We adopt the approach of intrinsic elasticity \cite{AMROUCHE2006116,CIARLET2004307,CIARLET2014_intrform}, formulating mechanical equilibrium conditions directly in terms of strains and stresses rather than in terms of the mechanical displacement.
This choice is natural in the present context for two main reasons. 
Firstly, the mechanical displacement is explicitly not required to be one-to-one due to the presence of kinematic incompatibility constraints; this makes it more convenient to formulate the problem directly in terms of strains and stresses, which are the physically observable quantities. 
Secondly, we are particularly interested in Neumann-type boundary conditions, where equilibrium is imposed directly in terms of stresses at the boundary, without requiring explicit access to the displacement field.
Nonetheless, as part of our main results in Section~\ref{202503121428}, we establish that once the inelastic strain component is isolated and removed, the remaining mechanical strain field can be lifted to a single-valued %\blu{elastic/compatible?}
 displacement field.

\subsection{Compatibility in non-simply connected domains: Airy potential formulation}
In his 1899 paper \cite{Michell}, Michell writes that for the displacement to be single-valued, i.e., kinematically compatible, the associated Airy function must satisfy the conditions (see \cite[formulas (11) and (12)]{Michell})
\begin{equation}\label{eq_Michell0}
\begin{cases}
\Delta^2 v=0 & \text{in $\Omega^{\nsc}$,}\\[2mm]
\displaystyle v(\lambda)=\int_0^\lambda (H_1^i\,\ud x_1+H_2^i\,\ud x_2)+a_1^i x_1+a_2^i x_2+a_0^i & \text{on $\gamma^i$, for $i=0,1,\ldots,N$,}\\[2mm]
\displaystyle \partial_n v=H_1^i\frac{\ud x_2}{\ud \lambda}-H_2^i\frac{\ud x_1}{\ud \lambda}+a_1^i\frac{\ud x_2}{\ud \lambda}-a_2^i\frac{\ud x_1}{\ud \lambda} & \text{on $\gamma^i$, for $i=0,1,\ldots,N$,}\\
\end{cases}
\end{equation}
where $\Delta^2\coloneqq\Delta\Delta$ is the bilaplacian operator, and the $3(N+1)$ compatibility conditions on the boundaries 
\begin{equation}\label{eq_Michell1}
\begin{cases}
\displaystyle \int_{\gamma^i} \partial_n(\Delta v)\,\ud \Huno=0 \\[3mm]
\displaystyle \int_{\gamma^i} \bigg(x_1\partial_t(\Delta v)-x_2\partial_n(\Delta v)+\frac{(\nabla^2 v\, t)_1}{1-\nu}\bigg)\,\ud \Huno=0 \\[3mm]
\displaystyle \int_{\gamma^i} \bigg(x_1\partial_n(\Delta v)+x_2\partial_t(\Delta v)+\frac{(\nabla^2 v\, t)_2}{1-\nu}\bigg)\,\ud \Huno=0 
\end{cases}
\quad\text{for $i=0,1,\ldots,N$.}
\end{equation}
In \eqref{eq_Michell0}, $\lambda\in[0,|\gamma^i|]$ for each of the $N+1$ equations in the second line,
where $H_1^i$ and $H_2^i$ are given functions on $\gamma^i$, and $a_0^i,a_1^i,a_2^i\in\R$ are undetermined for each $i=0,1,\ldots,N$. 
Notice that $(\ud x_2/\ud \lambda,-\ud x_1/\ud \lambda)=(n_1,n_2)$, so that the third equation in \eqref{eq_Michell0} can be written: $\partial_n v = (H_1^i+a_1^i) n_1 + (H_2^i+a_2^i) n_2$ on $\gamma^i$, for $i=0,1,\ldots,N$.

We set here the general notation for affine functions: the coefficients are denoted by $a_0^i,a_1^i,a_2^i\in\R$, so that 
$$a^i(x)=a^i(x_1,x_2)=a_0^i+\langle (a_1^i,a_2^i),x\rangle=a_0^i+a_1^ix_1+a_2^ix_2\,,
\qquad\text{for every $i=1,\ldots,N$.}$$

\begin{remark}
    We notice here the following facts:
    \begin{enumerate}
        \item There is the freedom to choose one triplet $a_0^{i^*}, a_1^{i^*}, a_2^{i^*}$ of the $3(N+1)$ constants $a_0^i, a_1^i, a_2^i$ in \eqref{eq_Michell0}, corresponding to a certain boundary $\gamma^{i^*}$, since the function $(x_1,x_2)\mapsto v(x_1,x_2)-a_1^{i^*}x_1-a_2^{i^*}x_2-a_0^{i^*}$ still solves \eqref{eq_Michell0}--\eqref{eq_Michell1}.
        \item The conditions in \eqref{eq_Michell1} are obtained from \cite[equations (6), (7), and (8)]{Michell} under the assumption that the body forces vanish. 
        The last summand in the second and third boundary integrals in \eqref{eq_Michell1} is obtained from \cite[equations (7) and (8)]{Michell} recalling that (see \eqref{eq_Airyoperatorgen} and also the second equation in \eqref{eq_formaltrans} below)
        \begin{equation}\label{eq_202409051159}
        (\sigma\, n)_1= (\nabla^2 v\, t)_2 
        \qquad\text{and}\qquad
        (\sigma\, n)_2= -(\nabla^2 v\, t)_1\,.
        \end{equation}
        This set of compatibility conditions is also given by Selvadurai in \cite[Theorem 8.8]{Selvadurai}.
        \item If $\Omega^{\nsc}=\Omega$ (it is simply connected), so that $\partial\Omega=\gamma^0$ ($N=0$), the corresponding set of conditions in \eqref{eq_Michell1} is trivially satisfied.
    \end{enumerate}
\end{remark}

In the next proposition, we prove that the boundary integrals in \eqref{202308101409} are the same as the boundary integrals in \eqref{eq_Michell1}, up to the constant $-E/(1-\nu^2)$.
\begin{proposition}\label{prop_202409051553}
Let $\Omega$ be as in \eqref{eq_Omega}, let $\Omega^1,\ldots, \Omega^N\subset\Omega$ be $N$ pairwise disjoint simply connected subsets with boundaries $\gamma^i\coloneqq \partial\Omega^i$.
Then, if $\epsilon=\C^{-1}\Airy(v)$, for $i=1,\ldots,N$,
\begin{subequations}\label{eq_YS}
\begin{eqnarray*}
    \displaystyle \frac{-E}{1-\nu^2} \int_{\gamma^i} \sum_{q=1}^2 (\epsilon_{1q,2}-\epsilon_{q2,1})\,\ud x_q &\!\!\!\! = &\!\!\!\! \int_{\gamma^i} \partial_n(\Delta v)\,\ud \Huno, \label{eq_YS_3}\\%[3mm]
    \displaystyle \frac{-E}{1-\nu^2} \int_{\gamma^i} \sum_{c,q=1}^2 [\epsilon_{2c}-x_q(\epsilon_{2c,q}-\epsilon_{cq,2})]\,\ud x_c &\!\!\!\! = &\!\!\!\! \int_{\gamma^i} \bigg(x_1\partial_n(\Delta v)+x_2\partial_t(\Delta v)+\frac{(\nabla^2 v\, t)_2}{1-\nu}\bigg)\,\ud \Huno, \label{eq_YS_2}\\%[3mm]
    \displaystyle \frac{-E}{1-\nu^2} \int_{\gamma^i} \sum_{c,q=1}^2 [\epsilon_{1c}-x_q(\epsilon_{1c,q}-\epsilon_{cq,1})]\,\ud x_c &\!\!\!\! = &\!\!\!\! \displaystyle \int_{\gamma^i} \bigg(x_1\partial_t(\Delta v)-x_2\partial_n(\Delta v)+\frac{(\nabla^2 v\, t)_1}{1-\nu}\bigg)\,\ud \Huno. \label{eq_YS_1}
\end{eqnarray*}
\end{subequations}
\end{proposition}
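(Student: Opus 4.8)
The plan is to substitute the expression of $\epsilon$ in terms of $v$ into the left-hand sides, carry out the differentiations, convert each line element $\ud x_c$ into an arc-length element along $\gamma^i$, and recognize the resulting integrands as the claimed combinations of $\partial_n(\Delta v)$, $\partial_t(\Delta v)$ and $(\nabla^2 v\,t)_r$. Throughout I assume $v$ to be regular enough (say of class $\mathcal{C}^3$ up to each $\gamma^i$) for all the integrals, and for the integration by parts along $\gamma^i$ below, to be legitimate; the general case then follows by density.

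First I would rewrite the strain through $v$ alone. Since $\sigma=\Airy(v)=\cof(\nabla^2 v)$ by \eqref{eq_Airyoperator} and $\cof$ is an involution on $2\times 2$ matrices, one has $\cof\sigma=\nabla^2 v$, so \eqref{eq_epsilonsigma-2} gives
\[
\frac{-E}{1-\nu^2}\,\epsilon \;=\; -\cof(\nabla^2 v)+\frac{\nu}{1-\nu}\,\nabla^2 v \;=:\; w,
\]
with entries $w_{11}=-v_{x_2x_2}+\tfrac{\nu}{1-\nu}v_{x_1x_1}$, $w_{12}=w_{21}=\tfrac{1}{1-\nu}v_{x_1x_2}$, and $w_{22}=-v_{x_1x_1}+\tfrac{\nu}{1-\nu}v_{x_2x_2}$. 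Multiplying the three claimed identities by $-E/(1-\nu^2)$ turns their left-hand sides into boundary integrals of the corresponding expressions in $w$.

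Next comes the key observation: all the first-derivative combinations collapse, the factors $\tfrac{1}{1-\nu}$ and $\tfrac{\nu}{1-\nu}$ recombining into $\pm 1$. A direct differentiation gives $w_{11,2}-w_{12,1}=-\partial_{x_2}(\Delta v)$ and $w_{12,2}-w_{22,1}=\partial_{x_1}(\Delta v)$, and, for the antisymmetrized combinations appearing in the other two identities, $w_{rc,q}-w_{cq,r}$ equals $0$ or $\pm\partial_{x_1}(\Delta v)$ or $\pm\partial_{x_2}(\Delta v)$ with the appropriate signs. Writing $\ud x_c=t_c\,\ud\Huno$ and using $n=(t_2,-t_1)$ (the orientation convention fixed just after \eqref{eq_Michell0}), one has the pointwise identities $-\partial_{x_2}g\,t_1+\partial_{x_1}g\,t_2=\partial_n g$ and $\partial_{x_1}g\,t_1+\partial_{x_2}g\,t_2=\partial_t g$ for any scalar $g$. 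Applying the first one with $g=\Delta v$ proves the first identity at once.

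For the two remaining identities, the $x_q$-weighted first-derivative terms yield, by the previous step, exactly the contributions $x_1\partial_t(\Delta v)-x_2\partial_n(\Delta v)$ and $x_1\partial_n(\Delta v)+x_2\partial_t(\Delta v)$. The only additional ingredient is the undifferentiated term: by the same cancellation, the sum over $c$ of $w_{rc}\,\ud x_c$ reduces to $\tfrac{1}{1-\nu}(\nabla^2 v\,t)_r\,\ud\Huno-\Delta v\,\ud x_r$, and the leftover $\Delta v\,\ud x_r$ is recast by integrating by parts along the \emph{closed} curve $\gamma^i$ (parametrized by arc length, so that differentiation along $\gamma^i$ is $\partial_t$ and there are no endpoint terms), giving $\int_{\gamma^i}\Delta v\,\ud x_r=-\int_{\gamma^i}x_r\,\partial_t(\Delta v)\,\ud\Huno$. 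Summing all the contributions reproduces the right-hand sides verbatim. I expect no conceptual difficulty; the only care required is bookkeeping — keeping the indices $c,q$ and the two values of $r$ straight, tracking signs through $\cof$ and through the substitution $n=(t_2,-t_1)$, and applying the curve integration by parts to the correct coordinate function $x_r$.
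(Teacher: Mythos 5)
Your proposal is correct and follows essentially the same route as the paper's proof: substitute $\epsilon=\C^{-1}\cof(\nabla^2 v)$, observe the cancellations that turn the antisymmetrized derivative combinations into $\pm\partial_{x_1}(\Delta v)$, $\pm\partial_{x_2}(\Delta v)$, use $n=\Pi t$ as in \eqref{202308152252}, and recast $\int_{\gamma^i}\Delta v\,\ud x_r$ via the closed-curve integration by parts, which is exactly Lemma~\ref{202308142312}. Your systematic bookkeeping through $w=-\cof(\nabla^2 v)+\tfrac{\nu}{1-\nu}\nabla^2 v$ is just a tidier organization of the same entry-by-entry computation the paper carries out for the representative case $r=1$.
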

\begin{proof}
The thesis follows from a direct computation, recalling the relationship between~$\epsilon$ and~$\sigma$ (see \eqref{eq_epsilonsigma}) and that between $\sigma$ and $v$ (see \eqref{eq_Airyoperatorgen}).
The computations below hold for ever $\gamma^i$, $i=1,\ldots,N$, so we will denote by $\gamma$ the generic curve; we illustrate the computation for the second equality: letting $r=1$, we have 
\begin{align*}
&\,\frac{-E}{1-\nu^2}\int_{\gamma}\big\{[\epsilon_{11}-x_2(\epsilon_{11,2}-\epsilon_{12,1})] \,\ud x_1+ [\epsilon_{12}-y(\epsilon_{12,2}-\epsilon_{22,1})] \,\ud x_2\big\}\\
=&\, \frac{-1}{1-\nu}\int_{\gamma} \big\{ [(1-\nu) v_{x_2x_2}-\nu v_{x_1 x_1}-x_2((1-\nu) v_{x_2x_2x_2}-\nu v_{x_1 x_1 x_2}+v_{x_1 x_2 x_1})] \,\ud x_1  \\
& \qquad\qquad+[-v_{x_1 x_2}-x_2(-v_{x_1 x_2 x_2}-(1-\nu) v_{x_1 x_1 x_1}+\nu v_{x_2 x_2 x_1})] \,\ud x_2 \big\} \\
=&\, \frac{-1}{1-\nu}\int_{\gamma} \{-v_{x_1x_1}\,\ud x_1-v_{x_1x_2}\,\ud x_2\} %(1-\nu)
-\int_{\gamma} \big\{[\Delta v-x_2\partial_{x_2}(\Delta v)]\,\ud x_1+x_2\partial_{x_1}(\Delta v)\,\ud x_2 \big\}\\
=&\, \int_\gamma \frac{(\nabla^2 v\,t)_1}{1-\nu}\,\ud\Huno - %(1-\nu)
\int_\gamma x_2\partial_n(\Delta v)\,\ud\Huno +%(1-\nu)
\int_\gamma x_1\partial_t(\Delta v)\,\ud\Huno,
\end{align*}
where the last equality follows from using \eqref{202308152252} and \eqref{202308142315}. 
We have obtained the first relationship in the thesis.
Letting $r=2$ in the first equation in \eqref{202308101409}, and performing analogous computations we have the last equality.
Finally, letting $r=1$ and $c=2$ in \eqref{202308101409}, and performing once again the same computations, we obtain the first equality.
\end{proof}

\subsection{Modeling disclinations and dislocations via kinematic incompatibility}
The study of dislocations and disclinations has attracted  significant attention from both modelers \cite{Acharya19, Acharya15, CPL14, KTT2024, ZA2018, ZHANG18} and mathematicians \cite{AlicandroDeLucaGarroniPonsiglione14, BlassFonsecaLeoniMorandotti15,BlassMorandotti17,ContiGarroniMueller11,ContiGarroniMueller22, HudsonMorandotti2017,Mora2017}.
For a more comprehensive overview, including references on statistical theories of disclinations and dislocations, we refer readers to \cite{CDLM2024} and \cite{ZA2018}. Additional insights into their dynamics can be found in \cite{BonaschiVanMeursMorandotti17,CGMP2025,vMP2024}.
For more work in modeling topological defects in linearized elasticity, see also
\cite{AmstutzVanGoethem2016,vG2017,vaGoethemDupret2012}.

In this work, we adopt the couple stress theory framework \cite{Toupin1962,Toupin1964} to model disclinations and dislocations. This approach allows us to account for rotational deformations and moment stresses at small scales, effects that are not captured by classical elasticity 
\cite{FRESSENGEAS20113499,TAUPIN2015277}.

Equation \eqref{eq_SVincstrong} emerges then as measures of kinematical incompatibility: 
in a simply connected domain~$\Omega$,
the displacement $u$ is incompatible if and only if the right-hand side of \eqref{eq_SVincstrong} does \emph{not} vanish. 
In the presence of isolated dislocations and disclinations in the material, we have (invoking the Closed Graph Theorem, see the discussion in \cite[Section 1.2]{CDLM2024})
\begin{equation}\label{eq_inc}
\inc\epsilon=\curl\Curl\epsilon=\curl\alpha-\theta \qquad\text{in $\mathcal{D}'(\Omega)$},
\end{equation}
where $\alpha\in\ED(\Omega)$ and $\theta\in\WD(\Omega)$ are the measures concentrated on edge dislocations and wedge disclinations, respectively, and 
\[\begin{split}
\ED(\Omega)\coloneqq & \bigg\{\alpha=\sum_{j=1}^J b^j\de_{x^j}: J\in\N, b^j\in\R^2\setminus\{0\}, x^j\in\Omega, x^{j_1}\neq x^{j_2}\text{ for }j_1\neq j_2\bigg\},\\
\WD(\Omega)\coloneqq & \bigg\{\theta=\sum_{k=1}^K s^k\de_{y^k}: K\in\N, s^k\in\R\setminus\{0\}, y^k\in\Omega, y^{k_1}\neq y^{k_2}\text{ for }k_1\neq k_2\bigg\}.
\end{split}\]
We will always suppose that the supports of disclinations and dislocations are disjoint, $\spt(\alpha)\cap\spt(\theta)=\emptyset$, namely
\begin{equation}\label{eq_disj_supp}
\{x^j\}_{j=1}^J\cap\{y^k\}_{k=1}^K=\emptyset.
\end{equation}
For such $\alpha$ and $\theta$, \eqref{eq_inc} reads
\begin{subequations}\label{eq_incexplicit}
\begin{equation}\label{eq_incexplicit_1}
\inc\epsilon=\curl\Curl\epsilon= \curl\alpha
-\sum_{k=1}^K s^k\de_{y^k} \qquad\text{in $\mathcal{D}'(\Omega)$},
\end{equation}
where $\curl\alpha$ can be expressed in the following three equivalent ways
\begin{equation}\label{eq_incexplicit_2}
\curl\alpha=-\!\sum_{j=1}^J |b^j|\partial_{\frac{(b^j)^\perp}{|b^j|}} \de_{x^j}\equiv - \!\sum_{j=1}^J \big((b^j)^1\partial_{x_2}\de_{x^j}-(b^j)^2\partial_{x_1}\de_{x^j}\big)\equiv
-\! \sum_{j=1}^J (b^j\times\nabla)\de_{x^j}\,.
\end{equation}
\end{subequations}
Considering $v\colon\Omega\to\R$, defining $\sigma[v]=\Airy(v)$ by \eqref{eq_Airyoperatorgen}, and letting $\epsilon[v]\coloneqq\mathbb{C}^{-1}\sigma[v]$ (see \eqref{eq_epsilonsigma}), we have the formal transformations
\begin{equation}\label{eq_formaltrans}
\inc\epsilon[v]=\curl\Curl\epsilon[v]=\frac{1-\nu^2}{E}\Delta^2 v\qquad\text{and}\qquad \sigma[v]n=\Pi(\nabla^2v\, t),
\end{equation}
where $n$ and $t$ are the outer unit normal and unit tangent vector, respectively, to $\partial\Omega$, and %$\Delta^2\coloneqq\Delta\Delta$ is the bilaplacian operator \footnote{\textcolor{red}{Nota: bilaplaciano definito qui ma utilizzato anche in precedenza in \ref{eq_Michell0} }}.
%The vector 
$\sigma n$ evaluated on $\partial\Omega$ is the \emph{traction at the boundary}; here, we use $\Pi$ to denote the rotation by $-\pi/2$, that is $\Pi(w_1,w_2)=(w_2,-w_1)$, for every $w=(w_1;w_2)\in\R^2$.
In view of \eqref{eq_formaltrans}, we write the two equivalent formulations for the traction-free equilibrium problem in the presence of defects
\begin{equation}\label{eq_equilibriumproblems}
\begin{cases}
    \inc\epsilon=\curl\alpha-\theta & \text{in $\Omega$,}\\
    \Div \sigma=0 & \text{in $\Omega$,}\\
    \sigma n=0 & \text{on $\partial\Omega$,}
\end{cases}\qquad\text{and}\qquad
\begin{cases}
    \displaystyle \frac{1-\nu^2}{E}\Delta^2 v=\curl\alpha-\theta &\text{in $\Omega$,}\\[2mm]
    \nabla^2 v\, t=0 & \text{on $\partial\Omega$.}
\end{cases}
\end{equation}
It has been proved in \cite[Corollaries 1.5 and 1.9]{CDLM2024} that the two formulations
for the traction-free equilibrium problem in the presence of disclinations are equivalent.
 
From the linearity of the bilaplacian operator $\Delta^2$, we can additively decompose the Airy function $v$ into 
%\emph{elastic} and \emph{plastic} 
{\emph{compatible} and \emph{incompatible}} contributions,
$$v=v^\cc+v^\ii,$$ 
where the %plastic
{incompatible} contribution $v^\ii$ is given by the superposition of the Green's functions corresponding to a defect located at a point $x^j$ (in case of dislocations) or at a point $y^k$ (in case of a disclination).
Therefore, we write
\begin{equation}\label{eq_vp}
v^\ii(x)=\sum_{j=1}^J (-b^j \times v_D(x-x^j)) +\sum_{k=1}^K (-s^k v_d(x-y^k)) \eqqcolon \sum_{j=1}^J v_D^j(x)+\sum_{k=1}^K v_d^k(x),
\end{equation}
where $v_d\colon\R^2\to\R$ and $v_D\colon\R^2\to\R^2$ are the functions defined on $\R^2$ by
\begin{subequations}\label{eq_v_dD}
\begin{eqnarray}
v_d(x) &\!\!\!\! \coloneqq &\!\!\!\! \frac{E}{1-\nu^2}\frac{|x|^2}{16\pi}\log|x|^2\quad (x\neq0),\qquad v_d(0)=0,\label{eq_vd}\\
v_D(x) &\!\!\!\! \coloneqq &\!\!\!\! \frac{E}{1-\nu^2}\frac{x}{8\pi}\big(\log|x|^2+1\big)\quad (x\neq0)\qquad v_D(0)=0,\label{eq_vD}
\end{eqnarray}
\end{subequations}
see \cite[equations (1.21) and (3.32)]{CDLM2024}.
From \eqref{eq_v_dD}, it is immediate to verify that
$$\frac{1-\nu^2}{E}\Delta^2 v_d=\de_0
\qquad\text{and}\qquad 
\frac{1-\nu^2}{E}\Delta^2 (e\times v_D)=(e\times\nabla)\de_0\,,$$
for every unit vector $e\in\mathbb{S}^1$,
so that the functions $v_D^j$ and $v_d^k$ defined in \eqref{eq_vp} for all $j=1,\ldots,J$ and $k=1,\ldots,K$ satisfy
\begin{equation}\label{eq_202409041452}
\frac{1-\nu^2}{E}\Delta^2 v_D^j=-(b^j\times\nabla)\de_{x^j}
\qquad\text{and}\qquad
\frac{1-\nu^2}{E}\Delta^2 v_d^k=-s^k\de_{y^k}\,.
\end{equation}
We can now define, via \eqref{eq_Airyoperatorgen} and \eqref{eq_epsilonsigma}, the %plastic
{incompatible} stress and strain
\begin{equation}\label{eq_sigmap_from_vp}
\sigma^\ii=\sigma^\ii[v^\ii]\coloneqq\cof(\nabla^2 v^\ii),\qquad
\epsilon^\ii=\epsilon^\ii[v^\ii]\coloneqq \mathbb{C}^{-1}\sigma^\ii[v^\ii]=\mathbb{C}^{-1}\cof(\nabla^2 v^\ii),
\end{equation}
and notice that, by \eqref{eq_202409041452}, the %plastic
{incompatible} strain $\epsilon^\ii$ solves \eqref{eq_incexplicit}.

\subsection{Core radius approach}
The core radius approach to study the equilibrium configuration for a defected planar body with dislocations and disclinations is set up by cutting away small disks (the cores) centered at the defects. 
We choose the radius $\varepsilon>0$ of the cores $B_\varepsilon^j=B_\varepsilon(x^j)$ ($j=1,\ldots,J$) and $B_\varepsilon^k=B_\varepsilon(y^k)$ ($k=1,\ldots,K$) centered at the defects with the only constraint that neither any two cores overlap nor a core crosses the boundary $\partial\Omega$ of the domain. 
Collectively, we let $N\coloneqq J+K$ and we let
\begin{equation}\label{eq_defects}
\defects=\{\xi^i\}_{i=1}^N\coloneqq \{x^j\}_{j=1}^J\cup\{y^k\}_{k=1}^K
\end{equation} 
denote the ensemble of all defects in the body.
The corresponding cores will be denoted by $B_\varepsilon^i=B_\varepsilon(\xi^i)$, for $\xi^i\in\defects$. 
Moreover, for $i=1,\ldots,N$, we set, for simplicity of notation,
\begin{equation}\label{eq_sb_general}
\begin{cases}
b^i\coloneqq 0 & \text{if $\xi^i=y^k$ for some $k=1,\ldots,K$,}\\
s^i\coloneqq 0 & \text{if $\xi^i=x^j$ for some $j=1,\ldots,J$,}
\end{cases}
\end{equation}
and we define the extended measures of defects $\tilde\alpha\in\ED(\Omega)$ and $\tilde\theta\in\WD(\Omega)$ as
\begin{equation}\label{eq_extended_measures}
\tilde\alpha\coloneqq\sum_{i=1}^N b^i\de_{\xi^i}\,,\qquad
\tilde\theta\coloneqq\sum_{i=1}^N s^i\de_{\xi^i}\,.
\end{equation}
Notice that, since $b^i=0$ if $\xi^i\notin\{x^j\}_{j=1}^J$\,, namely if $\xi^i$ is not a dislocation, then $\tilde\alpha(\Omega)=\alpha(\Omega)$; likewise, since $s^i=0$ if $\xi^i\notin\{y^k\}_{k=1}^K$\,, namely if $\xi^i$ is not a disclination, then $\tilde\theta(\Omega)=\theta(\Omega)$.
Finally, notice that, owing to \eqref{eq_disj_supp} and to \eqref{eq_sb_general}, we have $\spt(\tilde\alpha)\cap\spt(\tilde\theta)=\emptyset$.
With the positions \eqref{eq_defects}, \eqref{eq_sb_general}, and \eqref{eq_extended_measures}, we fix $\varepsilon\in(0,\varepsilon_0)$, where
\begin{equation}\label{eq_varepsilon0}
\varepsilon_0\coloneqq \min\bigg\{%&
\frac12\min\Big\{|\xi^{i_1}-\xi^{i_2}|: \xi^{i_1},\xi^{i_2}\in\defects, i_1\neq i_2\Big\}, \min \big\{\dist(\xi,\partial\Omega):\xi\in\defects\big\}\bigg\}
\end{equation}
and we consider the perforated domain 
\begin{equation}\label{eq_Omega_eps}
\Omega_\varepsilon=\Omega_\varepsilon(\defects)\coloneqq\Omega\setminus\bigg(\bigcup_{i=1}^N\overline{B}_\varepsilon^i\bigg),\quad\text{with}\quad \partial \Omega_\varepsilon = \partial \Omega\cup\bigg(\bigcup_{j=1}^J \partial B_\varepsilon^j\bigg)\cup \bigg(\bigcup_{k=1}^K \partial B_\varepsilon^k\bigg).
\end{equation}
Notice that $\Omega_\epsilon$ is a bounded, connected but not simply connected, open subset of $\R^2$, whose boundary $\partial \Omega_\varepsilon$ is made of $N+1=J+K+1$ connected $1$-dimensional components. 

We now study the properties of $v^\ii$ and highlight the effects of the defects on the boundaries of the cores $B_\varepsilon^i$ ($i=1,\ldots,N$).
\begin{proposition}[properties of $v^\ii$]\label{prop_boundary_cond_incomp}
Let $\Omega\subset\R^2$ be as in \eqref{eq_Omega}, and let $\alpha\in\ED(\Omega)$ and $\theta\in\WD(\Omega)$ be such that \eqref{eq_disj_supp} holds true.
Let $\defects$ be as in \eqref{eq_defects} and let $\Omega_\varepsilon(\defects)$ be defined as in \eqref{eq_Omega_eps}; let $\gamma^i\coloneqq\partial B_\varepsilon^i$.
The function $v^\ii$ defined in~\eqref{eq_vp} enjoys the following properties: 
\begin{enumerate}
\item[(i)] it solves, in $\mathcal{D}'(\Omega)$, the equation
\begin{equation}\label{eq_bilaplacianappendix}
\frac{1-\nu^2}{E}\Delta^2 v= \curl\tilde\alpha-\tilde\theta=-\sum_{i=1}^N (b^i\times\nabla)\de_{\xi^i}-\sum_{i=1}^N s^i\de_{\xi^i}\,,
\end{equation}
where the measures $\tilde\alpha\in\ED(\Omega)$ and $\tilde\theta\in\WD(\Omega)$ are defined in \eqref{eq_extended_measures}; 
\item[(ii)] it verifies
\begin{subequations}\label{eq_necessary_incompatible}
\begin{equation}\label{eq_necessary_incompatible_bulk}
\frac{1-\nu^2}{E}\Delta^2 v^\ii=0 \qquad\text{in $\Omega_\varepsilon(\defects)$;}
\end{equation}
\item[(iii)] for every $i=1,\ldots,N$, it satisfies 
\begin{equation}\label{eq_necessary_incompatible_boundary}
\begin{cases}
\displaystyle \frac{1-\nu^2}{E} \int_{\gamma^i} \partial_n(\Delta v^\ii)\,\ud\Huno=\tilde\theta(\Omega^i)\,,\\[3mm]
\displaystyle \frac{1-\nu^2}{E} \int_{\gamma^i} \bigg(x_1\partial_t(\Delta v^\ii)-x_2\partial_n(\Delta v^\ii)+\frac{(\nabla^2 v^\ii\,t)_1}{1-\nu}\bigg)\,\ud\Huno=(\tilde\alpha(\Omega^i))_1\,,\\[3mm]
\displaystyle \frac{1-\nu^2}{E} \int_{\gamma^i} \bigg(x_1\partial_n(\Delta v^\ii)+x_2\partial_t(\Delta v^\ii)+\frac{(\nabla^2 v^\ii\,t)_2}{1-\nu}\bigg)\,\ud\Huno=(\tilde\alpha(\Omega^i))_2\,,
\end{cases}
\end{equation}
where $n$ is the outer unit normal to $\partial\Omega_\varepsilon(\defects)$ and $t$ points clockwise.
\end{subequations}
\end{enumerate}
\end{proposition}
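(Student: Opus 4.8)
The plan is to dispose of items~(i) and~(ii) directly from the definition of $v^p$, and then to concentrate on item~(iii), which carries all the content. For~(i): by~\eqref{eq_vp} the function $v^p$ is a finite linear combination of the translated Green's functions $v_D^j(\cdot)=-b^j\times v_D(\cdot-x^j)$ and $v_d^k(\cdot)=-s^k v_d(\cdot-y^k)$; since $\Delta^2$ is linear and translation invariant, summing the identities \eqref{eq_202409041452} and invoking the conventions \eqref{eq_sb_general}, \eqref{eq_extended_measures} yields at once $\tfrac{1-\nu^2}{E}\Delta^2 v^p=-\sum_{i=1}^N(b^i\times\nabla)\de_{\xi^i}-\sum_{i=1}^N s^i\de_{\xi^i}=\curl\tilde\alpha-\tilde\theta$ in $\mathcal D'(\Omega)$. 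For~(ii): the right-hand side just obtained is supported on $\defects$, and each summand of $v^p$ is real analytic off its pole, so $v^p$ is biharmonic on $\R^2\setminus\defects\supset\Omega_\varepsilon(\defects)$, which is \eqref{eq_necessary_incompatible_bulk}.

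For item~(iii) fix $i$ and split $v^p=w_i+R_i$, where $w_i$ is the unique summand of $v^p$ with pole at $\xi^i$ and $R_i$ collects all the others. Because $\varepsilon<\varepsilon_0$ (see \eqref{eq_varepsilon0}), $R_i$ is biharmonic in a neighborhood of the simply connected closed disk $\overline{B}_\varepsilon^i$, so $\inc(\C^{-1}\Airy(R_i))=\tfrac{1-\nu^2}{E}\Delta^2 R_i=0$ in $B_\varepsilon^i$; the Saint-Venant principle (Proposition~\ref{prop_weakSV}) provides a smooth displacement $u_i$ on $B_\varepsilon^i$ with $\C^{-1}\Airy(R_i)=\nabla^{\sym}u_i$. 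Inserting $\epsilon=\nabla^{\sym}u_i$ into the line integrands of \eqref{202308101409} and using $\epsilon_{rc,q}-\epsilon_{cq,r}=\tfrac12\partial_c(\partial_q (u_i)_r-\partial_r (u_i)_q)$ shows, after one integration by parts, that each of these integrands is an exact $1$-form of a single-valued function on $B_\varepsilon^i$, hence integrates to zero over the closed curve $\gamma^i$; by Proposition~\ref{prop_202409051553} the three boundary integrals in \eqref{eq_necessary_incompatible_boundary} with $v^p$ replaced by $R_i$ vanish. (Equivalently, Michell's boundary conditions \eqref{eq_Michell1} hold automatically for any biharmonic function on a simply connected domain.) It thus suffices to evaluate \eqref{eq_necessary_incompatible_boundary} with $v^p$ replaced by the single defect contribution $w_i$.

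This last evaluation is carried out by explicit computation on the circle $\gamma^i=\partial B_\varepsilon(\xi^i)$ from the closed forms \eqref{eq_vd}--\eqref{eq_vD}, distinguishing two cases. If $\xi^i=y^k$ is a disclination, $w_i=v_d^k$ is radial about $y^k$, so on $\gamma^i$ one has $\partial_t(\Delta v_d^k)=0$, $\partial_n(\Delta v_d^k)$ is constant, and $\nabla^2 v_d^k\,t\parallel t$; the first integral collapses to $\tfrac{1-\nu^2}{E}\int_{\gamma^i}\partial_n(\Delta v_d^k)\,\ud\Huno=s^k=\tilde\theta(\Omega^i)$, while the remaining two reduce, through elementary integrals over $[0,2\pi]$, to the components of $\tilde\alpha(\Omega^i)$. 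If $\xi^i=x^j$ is a dislocation, $w_i=v_D^j$ and one differentiates $x\mapsto x(\log|x|^2+1)$ twice explicitly; the resulting $\partial_n(\Delta v_D^j)$ integrates to $0$ over $\gamma^i$ (giving $0=\tilde\theta(\Omega^i)$ for the first integral), while the two translational integrals evaluate to $b^j_1$ and $b^j_2$, i.e.\ to $(\tilde\alpha(\Omega^i))_1$ and $(\tilde\alpha(\Omega^i))_2$. Adding the vanishing $R_i$-contribution to the $w_i$-contribution yields \eqref{eq_necessary_incompatible_boundary}.

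I expect the main obstacle to be the explicit evaluation of the three boundary integrals for $w_i$ in the dislocation case: there $v_D$ is vector valued and lacks radial symmetry, so the Hessian $\nabla^2 v_D^j$, the tangential derivative $\partial_t(\Delta v_D^j)$, and the boundary term $(\nabla^2 v_D^j\,t)_r$ must all be tracked with care; one must also fix once and for all the orientation of the outer normal $n$ and of the clockwise tangent $t$ on the inner circles and keep it consistent, since a sign slip there would corrupt the final identification with $\tilde\theta(\Omega^i)$ and $\tilde\alpha(\Omega^i)$.
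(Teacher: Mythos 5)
Your items (i)--(ii) and your treatment of the ``other defects'' on $\gamma^i$ agree in substance with the paper: the paper disposes of the cross terms \eqref{eq_cross_terms} as an ``immediate consequence of the compatibility of $v^{i_2}$ away from $\xi^{i_1}$'', which is exactly your Saint-Venant/exact-form argument (only state it on a disk of radius strictly between $\varepsilon$ and $2\varepsilon_0$ containing $\overline B_\varepsilon^i$, so the lifted displacement exists on a neighborhood of $\gamma^i$ and the line integrals make sense). For the single-defect contribution, however, you take a genuinely different route: you propose to evaluate the three line integrals directly from the closed forms \eqref{eq_v_dD}, whereas the paper's Lemma~\ref{lemma_technical_defects} obtains them by duality, testing $\frac{1-\nu^2}{E}\Delta^2 v=-s\de_0$, resp.\ $-(b\times\nabla)\de_0$, against $H^2_0$ test functions that are affine in the core, integrating by parts in the annulus, and reading off the three identities from the choices $\varphi=a_0$, $a_1x_1$, $a_2x_2$; the only explicit computation there is $\int_\Gamma(\nabla^2 v\,t)_r\,\ud\Huno=0$. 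That duality shortcut is precisely what spares the paper the third-derivative and Hessian bookkeeping for $v_D$ that you correctly identify as the heaviest part of your plan.

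The concrete gap is in the disclination case for a defect not at the origin. The weights $x_1,x_2$ in \eqref{eq_necessary_incompatible_boundary} (and in \eqref{202308101409}) are absolute coordinates, and they are not translation invariant. For $w_i=v_d^k$ centered at $y^k$ you rightly note that $\partial_t(\Delta v_d^k)=0$ and $\partial_n(\Delta v_d^k)\equiv P$ is constant on $\gamma^i$, with $\frac{1-\nu^2}{E}\,2\pi\varepsilon\,P=s^k$, and that the Hessian terms integrate to zero; but then $-\frac{1-\nu^2}{E}\int_{\gamma^i}x_2\,\partial_n(\Delta v_d^k)\,\ud\Huno=-\frac{1-\nu^2}{E}\,P\,2\pi\varepsilon\,y^k_2=-s^k y^k_2$, and the third integral likewise yields $s^k y^k_1$, rather than the components of $\tilde\alpha(\Omega^i)=0$, unless $y^k=0$. (For dislocations translation is harmless, since the extra terms are multiples of $\int_{\gamma^i}\partial_t(\Delta v_D^j)\,\ud\Huno$ and $\int_{\gamma^i}\partial_n(\Delta v_D^j)\,\ud\Huno$, both of which vanish.) So your assertion that the two translational integrals ``reduce to the components of $\tilde\alpha(\Omega^i)$'' is not what the computation gives in the coordinates as written: you must either carry out the verification in coordinates centered at $\xi^i$ --- which is how the paper's prototype Lemma~\ref{lemma_technical_defects} is set up before it invokes superposition and translation --- or track these moment terms explicitly and reconcile them with the stated right-hand sides. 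Fix that centering convention, together with the orientation $n=-n_\rho$, $t=-t_\rho$ on the inner circles, before attacking the dislocation computation; otherwise the plan is sound.
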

The proof follows from a direct computation and is presented in Appendix~\ref{app_B}.

The following proposition translates the properties of $v^\ii$ listed in Proposition~\ref{prop_boundary_cond_incomp} to the associated strain $\epsilon^\ii$.
\begin{proposition}[properties of $\epsilon^\ii$]\label{prop_properties_of_eps_p}
Let the hypotheses of Proposition~\ref{prop_boundary_cond_incomp} hold. 
Then the strain $\epsilon^\ii$ defined in \eqref{eq_sigmap_from_vp} enjoys the following properties:
\begin{enumerate}
\item[(i)] it solves, in $\mathcal{D}'(\Omega)$, the equation
\begin{equation}\label{eq_inc-epsilon_full}
\inc\epsilon=\curl\Curl \epsilon= \curl\tilde\alpha-\tilde\theta=-\sum_{i=1}^N (b^i\times\nabla)\de_{\xi^i}-\sum_{i=1}^N s^i\de_{\xi^i}\,, 
\end{equation}
where the measures $\tilde\alpha\in\ED(\Omega)$ and $\tilde\theta\in\WD(\Omega)$ are defined in \eqref{eq_extended_measures}; 
\item[(ii)] it verifies
\begin{subequations}\label{eq_necessary_incompatible_epsilon}
\begin{equation}\label{eq_necessary_incompatible_bulk_epsilon}
\inc\epsilon^\ii=\curl\Curl\epsilon^\ii=0 \qquad\text{in $\Omega_\varepsilon(\defects)$;}
\end{equation}
\item[(iii)] for every $i=1,\ldots,N$, it satisfies 
\begin{equation}\label{eq_necessary_incompatible_boundary_epsilon}
\begin{cases}
\displaystyle \int_{\gamma^i} \sum_{q=1}^2 \big[(\epsilon^\ii)_{1q,2}-(\epsilon^\ii)_{q2,1}\big]\,\ud x_q=\tilde\theta(\Omega^i)\,,\\[4mm]
\displaystyle \int_{\gamma^i} \sum_{c,q=1}^2 \big[(\epsilon^\ii)_{rc}-x_q\big((\epsilon^\ii)_{rc,q}-(\epsilon^\ii)_{cq,r}\big)\big]\,\ud x_c= (\tilde\alpha(\Omega^i))_r\,,\quad\text{for $r=1,2$,}
\end{cases}
\end{equation}
\end{subequations}
where the boundaries $\gamma^i$ are oriented counter-clockwise.
\end{enumerate}
\end{proposition}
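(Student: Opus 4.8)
The plan is to read off all three statements from the corresponding properties of $v^p$ established in Proposition~\ref{prop_boundary_cond_incomp}, using the fixed algebraic dictionary between the Airy potential, the stress, and the strain. The key preliminary remark is that, by \eqref{eq_Airyoperatorgen} and the definition \eqref{eq_sigmap_from_vp}, one has $\epsilon^p=\mathbb{C}^{-1}\Airy(v^p)=\mathbb{C}^{-1}\cof(\nabla^2 v^p)$, so that the identities of Proposition~\ref{prop_202409051553} apply verbatim with $v=v^p$ and $\epsilon=\epsilon^p$, and the formal transformation $\inc\epsilon[v]=\frac{1-\nu^2}{E}\Delta^2 v$ from \eqref{eq_formaltrans} is available as well.

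For item (i), I would note that $\inc(\mathbb{C}^{-1}\cof(\nabla^2 w))=\frac{1-\nu^2}{E}\Delta^2 w$ holds pointwise for smooth $w$ and, being the identity of two constant-coefficient linear differential operators, extends to $\mathcal{D}'(\Omega)$; since $\nabla^2 v^p\in L^1_{\loc}(\Omega)$ (both $\nabla^2 v_d\sim\log|x|$ and $\nabla^2 v_D\sim 1/|x|$ are locally integrable near the defect points), $\epsilon^p$ is a well-defined distribution, and applying the identity to $v^p$ together with Proposition~\ref{prop_boundary_cond_incomp}(i), i.e.\ \eqref{eq_bilaplacianappendix}, yields \eqref{eq_inc-epsilon_full}. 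Item (ii) is the same computation restricted to $\Omega_\varepsilon(\defects)$, where $v^p$ is classical and, by \eqref{eq_necessary_incompatible_bulk}, biharmonic: there $\inc\epsilon^p=\frac{1-\nu^2}{E}\Delta^2 v^p=0$, which is \eqref{eq_necessary_incompatible_bulk_epsilon}.

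For item (iii), I would chain the three identities of Proposition~\ref{prop_202409051553} with the three boundary relations \eqref{eq_necessary_incompatible_boundary} of Proposition~\ref{prop_boundary_cond_incomp}(iii). For instance, Proposition~\ref{prop_202409051553} gives $\frac{-E}{1-\nu^2}\int_{\gamma^i}\sum_q\big((\epsilon^p)_{1q,2}-(\epsilon^p)_{q2,1}\big)\,\ud x_q=\int_{\gamma^i}\partial_n(\Delta v^p)\,\ud\Huno$, while \eqref{eq_necessary_incompatible_boundary} gives $\int_{\gamma^i}\partial_n(\Delta v^p)\,\ud\Huno=\frac{E}{1-\nu^2}\tilde\theta(\Omega^i)$; cancelling the constant produces the first relation in \eqref{eq_necessary_incompatible_boundary_epsilon} up to sign, and the two remaining relations (for $r=1,2$) follow by pairing the $\epsilon_{1c}$- and $\epsilon_{2c}$-identities of Proposition~\ref{prop_202409051553} with the third and second relations in \eqref{eq_necessary_incompatible_boundary}, respectively.

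The computations are entirely routine; the one point that genuinely needs care — and where a careless argument would pick up a wrong sign — is the orientation bookkeeping: Proposition~\ref{prop_boundary_cond_incomp}(iii) is stated with the tangent pointing clockwise and the normal outer to $\Omega_\varepsilon(\defects)$ (hence pointing into the holes), whereas \eqref{eq_necessary_incompatible_boundary_epsilon} orients the curves $\gamma^i$ counter-clockwise; reversing the parametrization flips the sign of every line integral $\int_{\gamma^i}(\cdots)\,\ud x_c$ and, simultaneously, the sign of $\partial_n$, and one must check that these cancellations turn the apparent sign discrepancy between \eqref{eq_necessary_incompatible_boundary} and \eqref{eq_necessary_incompatible_boundary_epsilon} into an equality. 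This, together with keeping the multiplicative constant $-E/(1-\nu^2)$ consistent throughout, is the only real content of the proof.
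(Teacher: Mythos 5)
Your proposal follows the paper's proof exactly: items (i) and (ii) are read off from \eqref{eq_formaltrans} together with Proposition~\ref{prop_boundary_cond_incomp}(i)--(ii), and item (iii) is obtained by chaining Proposition~\ref{prop_202409051553} with \eqref{eq_necessary_incompatible_boundary}, the orientation reversal taking care of the sign; your added remarks on the local integrability of $\nabla^2 v^p$ (so that $\epsilon^p$ is a well-defined distribution) and on the linked $t$--$n$ conventions are correct and are the only points the paper leaves implicit. One bookkeeping slip: the pairing you state for the dislocation relations is backwards. The identity of Proposition~\ref{prop_202409051553} involving $\epsilon_{1c}$ has right-hand side $\int_{\gamma^i}\big(x_1\partial_t(\Delta v)-x_2\partial_n(\Delta v)+\tfrac{(\nabla^2 v\,t)_1}{1-\nu}\big)\,\ud\Huno$ and must therefore be matched with the \emph{second} relation in \eqref{eq_necessary_incompatible_boundary} (the one equal to $(\tilde\alpha(\Omega^i))_1$), while the $\epsilon_{2c}$-identity matches the \emph{third} (equal to $(\tilde\alpha(\Omega^i))_2$); taken literally, your pairing would swap the two components of the Burgers vector. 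Since the matching is forced by the identical integrands on the right-hand sides, this is a slip rather than a gap, and the rest of the argument, including the sign and orientation analysis you single out as the real content, is sound.
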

\begin{proof}
The proof of \eqref{eq_inc-epsilon_full} and \eqref{eq_necessary_incompatible_bulk_epsilon} is an immediate consequence of~\eqref{eq_formaltrans}, and \eqref{eq_bilaplacianappendix} and  \eqref{eq_necessary_incompatible_bulk} from Proposition~\ref{prop_boundary_cond_incomp}; the proof of \eqref{eq_necessary_incompatible_boundary_epsilon} follows from \eqref{eq_epsilonsigma}, \eqref{eq_necessary_incompatible_boundary} from Proposition~\ref{prop_boundary_cond_incomp}, and  Proposition~\ref{prop_202409051553}. 
\end{proof}

\section{Main results}\label{202503121428}
In this section we present our results establishing the existence of solutions for the mechanical equilibrium problem in an elastic material containing prescribed rotational and translational incompatibilities, %along its internal rims, 
subject to homogeneous Neumann boundary conditions (see the system in \eqref{2503041822}). 
Specifically, Theorem~\ref{2502282100} below demonstrates the equivalence between this equilibrium problem (referred to as the stress-strain mechanical equilibrium formulation) 
and the minimization problem for a functional defined in terms of the Airy potential (see \eqref{eq_energyperforated} below).
Furthermore, we clarify that these solutions, whether expressed through the stress-strain pair or the corresponding Airy potential, can be lifted in a specific sense, that is, after removing an inelastic term that solely accounts for the incompatibilities, they yield the existence of a single-valued displacement field. %(see Theorem~\ref{thm_liftabilityE} below).

Recalling \eqref{eq_energyv}, we define the energy functional $\I(\cdot;\Omega_\varepsilon)\colon H^2(\Omega)\to\R$ as
\begin{equation}\label{eq_energyperforated}
\begin{split}
\I(v;\Omega_{\varepsilon})\coloneqq &\,
\G(v;\Omega_{\varepsilon})    +\sum_{j=1}^J\frac{1}{2\pi\ce}\int_{\partial B_{\ce}^j} \langle\nabla v, \Pi(b^j)\rangle\,\ud\Huno
   +\sum_{k=1}^K \frac{s^k}{2\pi\ce}\int_{\partial B_\ce^k} v\,\ud\Huno\,,
   \end{split}
\end{equation}
which we seek to minimize in the class
\begin{equation}\label{eq_competitorsperforatedfloat}
\begin{split}
\Cnew(\Omega_\varepsilon(\defects))\coloneqq 
\big\{v\in H^2_0(\Omega): \text{$v=a^i$ in $B_\ce^i$\,, for some affine functions $a^i$, $i=1,\dots,N$}\}\,.
\end{split}
\end{equation}
This class of functions provides the appropriate framework for identifying mechanical equilibrium solutions. It was first introduced in \cite{CDLM2024} and encodes unknown affine boundary conditions on the internal boundaries.  
It was proved in \cite[Proposition~A.2]{CDLM2024} that these boundary conditions characterize the fact that \begin{equation}\label{eq_hesstang0}
\nabla^2 v\,t=0\qquad \text{on $\partial\Omega_\varepsilon$,} 
\end{equation}
provided that $\partial\Omega\in\mathcal{C}^4$.
Owing to \eqref{eq_202409051159}, this condition ensures zero normal stress on $\partial\Omega_\varepsilon(\defects)$, and was first recognized by Michell \cite{Michell}, see \eqref{eq_Michell0}.
We notice that affine functions $v$ have zero Hessian, so that they induce zero stress $\sigma$ and hence correspond to rigid body motions.

The following is the main theorem of this work.
\begin{theorem}\label{2502282100}
Let $\Omega$ be as in \eqref{eq_Omega} and let us assume that $\partial \Omega$ is smooth. 
Let $\alpha\in\ED(\Omega)$ and $\theta\in\WD(\Omega)$ satisfy \eqref{eq_disj_supp}, and let $\defects$ be as in \eqref{eq_defects}.
Let $\I$ and $\Cnew(\Omega_\varepsilon(\defects))$ be defined as in \eqref{eq_energyperforated} and \eqref{eq_competitorsperforatedfloat}, respectively.
Then
\begin{enumerate}
\item[(I)]
the strain $\hat\epsilon\in \mathcal{C}^\infty(\Omega_\varepsilon(\defects);\R^{2\times2}_{\sym})$  is the unique solution to 
\begin{equation}\label{2502131927}
\begin{cases}
\Dive\C\epsilon = 0 & \text{in $\Omega_{\varepsilon}$\,,}\\			
\C\epsilon n = 0 & \text{on $\partial\Omega_{\varepsilon}$\,,} \\%[2mm]
\curl\Curl\epsilon = 0 & \text{in $\Omega_{\varepsilon}$\,,} \\
\displaystyle \int_{\partial B_\varepsilon^i} (\epsilon_{rq,c}-\epsilon_{qc,r})\,\ud x_q = \tilde\theta(B_\varepsilon^i) & \text{for $i = 1,\ldots, N$,}\\[3mm]
\displaystyle \int_{\partial B_\varepsilon^i} [\epsilon_{rc}-x_q(\epsilon_{rc,q}-\epsilon_{cq,r})]\,\ud x_c = (\tilde\alpha(B_\varepsilon^i))_r & \text{for $i = 1,\ldots, N$ and $r=1,2$,}
\end{cases}
\end{equation}
where the boundaries $\partial B_\varepsilon^i$ are oriented counter-clockwise,
if and only if $\hat{v}\in \Cnew(\Omega_\varepsilon(\defects))\cap \mathcal{C}^\infty(\overline{\Omega}_\varepsilon(\defects))$ is the unique solution to 
\begin{equation}\label{2502131930}
\min\big\{\I(v;\Omega_{\varepsilon}(\defects)):v\in \Cnew(\Omega_\varepsilon(\defects))\big\},
\end{equation}
where $\hat\epsilon$ and $\hat{v}$ are related by
\begin{equation}\label{eq_cofattore}
\C\hat\epsilon=\cof(\nabla^2 \hat{v}).
\end{equation}
In other words, $[\hat\epsilon$ solves \eqref{2502131927} $\Leftrightarrow$ $\hat{v}$ solves \eqref{2502131930}$]$ $\Leftrightarrow$ \eqref{eq_cofattore} holds.
\item[(II)]
The strain and the Airy potential found in (I) are liftable in the following sense. There exists a unique (up to rigid motions) displacement $\hat{u}^\cc\in \mathcal{C}^\infty(\overline{\Omega}_\varepsilon(\defects);\R^{2})$ such that $\hat{\epsilon}^\cc=\nabla^\sym \hat{u}^\cc$, where $\hat\epsilon^\cc$ is defined by $\hat{\epsilon}^\cc\coloneqq \C^{-1}[\Airy(\hat{v})-\sigma^\ii]=\C^{-1}[\cof(\nabla^2 \hat{v})-\sigma^\ii] $, for $\sigma^\ii=\Airy(v^\ii)=\cof(\nabla^2 v^\ii)$, as in \eqref{eq_Airyoperatorgen}.
\end{enumerate}
\end{theorem}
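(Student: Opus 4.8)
The plan is to read \eqref{2502131930} as the minimization of a strictly convex, coercive quadratic functional over a closed linear subspace, to write down its Euler--Lagrange equations, and to match those equations term by term with the system \eqref{2502131927} through the strain--Airy dictionary of Proposition~\ref{prop_202409051553}. First I would observe that $\Cnew(\Omega_\varepsilon(\defects))$ is a closed linear subspace of $H^2_0(\Omega)$ (a sum or scalar multiple of functions that are affine on a disk is again affine there), that the bulk part $\G(\cdot;\Omega_\varepsilon)$ is continuous and coercive on it --- the integrand $|\nabla^2 v|^2-\nu(\Delta v)^2$ is a positive-definite quadratic form in $\nabla^2 v$ for $\nu\in(-1,1/2)$, and the Poincar\'e inequality on $H^2_0$ upgrades this to control of the full $H^2$ norm --- and that the two families of boundary integrals in \eqref{eq_energyperforated} are continuous linear functionals by the trace theorem. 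Hence $\I(\cdot;\Omega_\varepsilon)$ admits a unique minimizer $\hat v\in\Cnew(\Omega_\varepsilon(\defects))$, characterized by the weak Euler--Lagrange identity
\[
\frac{1+\nu}{E}\int_{\Omega_\varepsilon}\big(\nabla^2\hat v:\nabla^2\varphi-\nu\,\Delta\hat v\,\Delta\varphi\big)\,\ud x
+\sum_{j=1}^J\frac{1}{2\pi\varepsilon}\int_{\partial B_\varepsilon^j}\langle\nabla\varphi,\Pi(b^j)\rangle\,\ud\Huno
+\sum_{k=1}^K\frac{s^k}{2\pi\varepsilon}\int_{\partial B_\varepsilon^k}\varphi\,\ud\Huno=0
\]
for all $\varphi\in\Cnew(\Omega_\varepsilon(\defects))$; this part follows the scheme of \cite{CDLM2024}.

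Next I would derive \eqref{2502131927} from the minimizer. Testing the identity against $\varphi\in\mathcal{C}^\infty_c(\Omega_\varepsilon)$ gives $\Delta^2\hat v=0$ in $\Omega_\varepsilon$, so $\hat v$ is analytic there; elliptic regularity for the biharmonic operator (with $\partial\Omega$ smooth, the internal boundaries circles, and the complementing conditions of the boundary operators in force, as in \cite{CDLM2024}) yields $\hat v\in\mathcal{C}^\infty(\overline{\Omega}_\varepsilon(\defects))$. Setting $\hat\sigma\coloneqq\cof(\nabla^2\hat v)$ and $\hat\epsilon\coloneqq\C^{-1}\hat\sigma$ --- so that \eqref{eq_cofattore} holds by construction --- identity \eqref{eq_Div_sigma} gives $\Div\C\hat\epsilon\equiv0$ and \eqref{eq_formaltrans} gives $\curl\Curl\hat\epsilon=\tfrac{1-\nu^2}{E}\Delta^2\hat v=0$ in $\Omega_\varepsilon$; moreover $\hat v\in\Cnew(\Omega_\varepsilon(\defects))$ forces, by \cite[Proposition~A.2]{CDLM2024}, $\nabla^2\hat v\,t=0$ on $\partial\Omega_\varepsilon$, hence $\C\hat\epsilon\,n=\Pi(\nabla^2\hat v\,t)=0$ there; this establishes the first three lines of \eqref{2502131927}. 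For the remaining two I would integrate by parts twice in the bulk term of the Euler--Lagrange identity; using $\Delta^2\hat v=0$ and the vanishing of $\varphi$ and $\nabla\varphi$ on $\partial\Omega$, the residue is a sum over the internal circles of the natural (Kirchhoff-plate-type) boundary operators built from $\partial_n(\Delta\hat v)$, $\partial_t(\Delta\hat v)$ and $\nabla^2\hat v\,t$, paired with $\varphi$ and $\partial_n\varphi$, which must balance the forcing. Since on each $\partial B_\varepsilon^i$ the competitor $\varphi$ equals the restriction of an arbitrary affine function $c_0^i+c_1^ix_1+c_2^ix_2$ (every affine datum being realized by a cutoff while staying in $\Cnew(\Omega_\varepsilon(\defects))$), I would collect the coefficients of $c_0^i$, $c_1^i$, $c_2^i$, simplify using $\nabla^2\hat v\,t=0$ and a tangential integration by parts on the circle, and recognize the three Michell boundary integrals of \eqref{eq_Michell1} over $\partial B_\varepsilon^i$ equated to $\tilde\theta(B_\varepsilon^i)$ and the two components of $\tilde\alpha(B_\varepsilon^i)$ (up to the factor $-E/(1-\nu^2)$); Proposition~\ref{prop_202409051553} then rewrites these exactly as lines four and five of \eqref{2502131927}, a careful check of the orientation of $\partial B_\varepsilon^i$ fixing the signs.

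To close the equivalence I would prove that \eqref{2502131927} has at most one solution and conclude by uniqueness. If two strains solved \eqref{2502131927}, their difference $\delta$ would satisfy the homogeneous system; its bulk equation $\curl\Curl\delta=0$ in $\Omega_\varepsilon$ together with the homogeneous versions of the last two conditions are precisely Yavari's kinematic compatibility conditions \eqref{202502151537}--\eqref{202308101409}, so by \cite{Yavari2013} there is a single-valued $u\in H^1(\Omega_\varepsilon;\R^2)$ with $\delta=\nabla_\sym u$; since then $\Div\C\delta=0$ in $\Omega_\varepsilon$ with $\C\delta\,n=0$ on $\partial\Omega_\varepsilon$, uniqueness for the pure-traction problem of linearized elasticity forces $\nabla_\sym u=0$, i.e.\ $\delta=0$. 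Combined with the previous step, \eqref{2502131927} has the unique solution $\hat\epsilon=\C^{-1}\cof(\nabla^2\hat v)$; since $\cof$ is an involution on $2\times2$ matrices, \eqref{eq_cofattore} determines $\nabla^2\hat v$ from $\hat\epsilon$ and hence $\hat v$ uniquely inside $\Cnew(\Omega_\varepsilon(\defects))$, which yields the three-fold equivalence of part~(I). (Alternatively one may build $\hat v$ directly from a solution $\hat\epsilon$: since $\Div\C\hat\epsilon=0$ and $\C\hat\epsilon\,n=0$ on the internal circles, the resultant force and moment on every hole vanish, so the Airy stress function is single-valued on $\Omega_\varepsilon$; it is then normalized to $\Cnew(\Omega_\varepsilon(\defects))$ using $\C\hat\epsilon\,n=0$ on all of $\partial\Omega_\varepsilon$ via \cite[Proposition~A.2]{CDLM2024}, and shown to solve the Euler--Lagrange identity, hence to minimize $\I$ by strict convexity.)

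For part~(II) I would set $\hat\epsilon^e\coloneqq\C^{-1}[\Airy(\hat v)-\sigma^p]=\hat\epsilon-\epsilon^p$ and subtract Proposition~\ref{prop_properties_of_eps_p} from the solved system \eqref{2502131927}: since $\epsilon^p$ realizes $\curl\Curl\epsilon^p=0$ in $\Omega_\varepsilon$ and the very same boundary integrals with right-hand sides $\tilde\theta(B_\varepsilon^i)$ and $\tilde\alpha(B_\varepsilon^i)$, the difference $\hat\epsilon^e$ satisfies $\curl\Curl\hat\epsilon^e=0$ in $\Omega_\varepsilon$ together with the homogeneous conditions \eqref{202308101409}; by the Saint--Venant theorem in non-simply connected domains \cite{Yavari2013} there exists $\hat u^e\in H^1(\Omega_\varepsilon;\R^2)$, unique up to rigid motions, with $\hat\epsilon^e=\nabla_\sym\hat u^e$. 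Since $v^p$, and hence $\epsilon^p$, is smooth away from the defect points --- all lying in the removed disks --- we have $\hat\epsilon^e\in\mathcal{C}^\infty(\overline{\Omega}_\varepsilon(\defects);\R^{2\times2}_{\sym})$, so $\hat u^e\in\mathcal{C}^\infty(\overline{\Omega}_\varepsilon(\defects);\R^2)$ by elliptic regularity (equivalently, by the Ces\`aro--Volterra path-integral formula). I expect the main obstacle to be the boundary computation in the second step: carrying the double integration by parts of the Kirchhoff-type energy through to the three scalar natural boundary conditions per hole, simplifying them by means of $\nabla^2\hat v\,t=0$ and a tangential integration by parts on each circle, and matching them --- with the correct orientation-dependent signs --- to the strain integrals of Proposition~\ref{prop_202409051553}; the rest is either soft functional analysis or a direct appeal to results already recorded above.
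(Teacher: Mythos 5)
Your proposal is correct in substance, and for the direction ``minimizer $\Rightarrow$ system'' it follows the paper's own path almost verbatim: direct method and strict convexity for existence/uniqueness of $\hat v$ (Proposition~\ref{2408291022}), interior biharmonicity plus boundary regularity, the double integration by parts with test functions that are affine near one internal circle and zero elsewhere (Proposition~\ref{cor_strongEL}), the traction-free condition via \cite[Proposition~A.2]{CDLM2024}, and the dictionary of Proposition~\ref{prop_202409051553} to land on the last two lines of \eqref{2502131927} (Proposition~\ref{2408301408}). Where you genuinely diverge is the converse direction: you prove uniqueness for the system \eqref{2502131927} directly, by lifting the homogeneous difference of two solutions through Yavari's necessary-and-sufficient compatibility conditions \eqref{202502151537}--\eqref{202308101409} and then killing it with the standard energy argument for the traction-free problem, whereas the paper instead constructs and identifies the solution through a second, strain-side variational problem: it extends to the cylinder $\Omega_\varepsilon\times(0,1)$, minimizes the energy \eqref{eq_E} over the weakly closed class $K'(\Omega_\varepsilon)$, and invokes Ting's Theorem~\ref{thm_Ting} to obtain liftability and the Euler--Lagrange system \eqref{cauchy_ee}, concluding by uniqueness for that auxiliary problem (Theorem~\ref{thm_liftabilityE}). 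Your route is shorter and more elementary (no 3D extension, no Ting), and it closes the three-fold equivalence neatly via the involutivity of $\cof$ on $2\times2$ matrices together with the clamped conditions on $\partial\Omega$; the paper's route buys more, namely an intrinsic stress--strain variational formulation of the equilibrium problem and the explicit displacement structure \eqref{eq_explicit_ue}, which is of independent mechanical interest. For part (II) the two arguments are essentially the same subtraction of the plastic part, yours performed at the strain level via Proposition~\ref{prop_properties_of_eps_p} and Yavari, the paper's at the Airy level via Proposition~\ref{prop_boundary_cond_incomp} and \cite[Theorem~8.8]{Selvadurai}. Two small points to tighten if you write this up: the line integrals in \eqref{2502131927} and the energy/trace manipulations in your uniqueness step require smoothness of the competing solutions up to the internal circles (smoothness on the open set $\Omega_\varepsilon(\defects)$ alone does not give boundary traces), so state that solutions are understood in $\mathcal{C}^\infty(\overline{\Omega}_\varepsilon(\defects))$ or argue regularity up to the boundary first; and your appeal to elliptic regularity for $\hat v$ is cleanest if phrased, as in the paper, through the Dirichlet problem with the minimizer's own affine boundary data (cf.\ \eqref{eq: 11020241240} and \cite[Theorem~2.20]{Gazzola09}) rather than through general complementing-condition arguments.
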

\begin{proof}
We present here the structure of the proof, that is articulated into several propositions proved in the rest of the paper.
\begin{enumerate}
\item[(I.1)] This is contained in Section \ref{2503091640}: we prove that if $\hat{v}\in \Cnew(\Omega_\varepsilon(\defects))\cap \mathcal{C}^\infty(\overline{\Omega}_\varepsilon(\defects))$ is the unique solution to \eqref{2502131930}, and \eqref{eq_cofattore} holds, then  $\hat\epsilon\in \mathcal{C}^\infty(\Omega_\varepsilon(\defects);\R^{2\times2}_{\sym})$ is the unique solution to \eqref{2502131927}.

In Proposition~\ref{2408291022} we show that the minimum problem \eqref{2502131930} has a unique solution $\hat{v}\in \Cnew(\Omega_\varepsilon(\defects))$ and characterize it in terms of the Euler--Lagrange equations in weak form; the strong form is obtained in Proposition~\ref{cor_strongEL} under the assumption that $\hat{v}\in\Cnew(\Omega_\epsilon(\defects))\cap H^4(\Omega_\epsilon(\defects))$.
In Proposition~\ref{2408301408}, we show that the strain $\hat\epsilon$ defined by \eqref{eq_cofattore} 
is the unique solution to \eqref{2502131927}. The regularity of $\hat{v}$, and therefore of $\hat\epsilon$, is a consequence of the smoothness of the boundary~$\partial\Omega$.

\item[(I.2)] This is contained in Section \ref{2503091644}: we prove that if $\hat\epsilon\in \mathcal{C}^\infty(\Omega_\varepsilon(\defects);\R^{2\times2}_{\sym})$ is the unique solution to \eqref{2502131927}, and \eqref{eq_cofattore} holds, then $\hat{v}\in \Cnew(\Omega_\varepsilon(\defects))\cap \mathcal{C}^\infty(\overline{\Omega}_\varepsilon(\defects))$ is the unique solution to \eqref{2502131930}.

The strategy hinges on the superposition principle thanks to the linearity of the equations. 
From $\alpha\in\ED(\Omega)$ and $\theta\in\WD(\Omega)$, we can define $v^\ii\in \mathcal{C}^\infty(\overline{\Omega}_\varepsilon(D))$ as in \eqref{eq_vp} (the regularity is immediate to verify); from \eqref{eq_Airyoperatorgen}, we can define the corresponding stress tensor $\sigma^\ii\coloneqq \Airy(v^\ii)=\cof(\nabla^2 v^\ii)$; finally, we let $\epsilon^\ii\coloneqq \C^{-1}\sigma^\ii$.
Notice that $\epsilon^\ii,\sigma^\ii\in \mathcal{C}^\infty(\overline{\Omega}_\varepsilon(D);\R^{2\times2}_{\sym})$.
The existence, uniqueness, and regularity of the solution to \eqref{2502131927} are proved in Theorem~\ref{thm_liftabilityE}, where the %elastic
{compatible} part $\hat{\epsilon}^\cc$ is found as the unique solution to \eqref{cauchy_ee}. 
Letting $\hat v$ be the solution to \eqref{2502131930}, we define $\hat v^\cc\coloneqq \hat v-v^\ii$ and construct the corresponding $\tilde \epsilon^\cc$ via \eqref{eq_cofattore}. The proof is concluded by a uniqueness argument, once we show, in Step 3 of the proof of Theorem~\ref{thm_liftabilityE}, that $\tilde\epsilon^\cc$ also solves \eqref{cauchy_ee}.  

\item[(II)] This is contained in Proposition~\ref{prop_regularity}. \qedhere
\end{enumerate}
\end{proof}

\begin{remark}
The proof of Theorem~\ref{2502282100} bears significant mechanical implications, which we highlight here.

We find an important characterization of the unique minimizer $\hat{v}$: the Euler-Lagrange equations for problem \eqref{2502131930} in the set  $\Cnew(\Omega_\varepsilon(\defects))$ are derived in Proposition \ref{cor_strongEL} and presented in \eqref{eq_strong_EL_v_min}. 
In particular, \eqref{eq_strong_EL_v_min_bulk} represents the compatibility of the Airy potential in the open set  $\Omega_{\varepsilon}(\defects)$, in the absence of bulk defects. 
Interestingly, the boundary integrals in \eqref{eq_strong_EL_v_min_BC}, which represent incompatibility conditions formulated in terms of the Airy potential, arise as a direct consequence of the natural minimality conditions.

Furthermore, after proving a useful orthogonal decomposition of the space of competitors $\Cnew(\Omega_\varepsilon(\defects))$, we show that the minimization problem \eqref{2502131930}  can be formulated in terms of a finite-dimensional minimization involving cell formulas, which we present in the next subsection.
\end{remark}

\subsection{Orthogonal decomposition of $\Cnew(\Omega_\varepsilon(\defects))$ and cell formulas} \label{sec:251124}
In this section, we prove structural properties of the space of admissible competitors $\Cnew(\Omega_\varepsilon(\defects))$ introduced in~\eqref{eq_competitorsperforatedfloat}.
We define the bilinear, symmetric, homogeneous, and strictly positive definite form
$(\cdot, \cdot ) \colon$ $\Cnew(\Omega_\varepsilon(\defects)) \times \Cnew(\Omega_\varepsilon(\defects)) \to \mathbb{R}$ by
\begin{equation}\label{eq_formabilin}
(v_1, v_2) \mapsto \frac{1+\nu}{E} \int_{\Omega_{\varepsilon}} \big( (1-\nu)\nabla^2 v_1 : \nabla^2 v_2 - \nu [v_1, v_2] \big) \,\ud x ,
\end{equation}
where $[\cdot, \cdot]$ is the Monge-Ampère operator (see Definition~\ref{sec:24241126922}).
The form $(\cdot, \cdot )$ defines an inner product and induces the norm $\lVert v \lVert \coloneqq \sqrt{(v, v)}$ over $\Cnew(\Omega_\varepsilon(\defects))$ which is equivalent to the $H^2$ norm.
Let us consider now the orthogonal complement to $H^2_0(\Omega_{\varepsilon}(\defects))$ in $\Cnew(\Omega_\varepsilon(\defects))$ with respect to the scalar product just defined:
\begin{equation*}
    (H^2_0(\Omega_{\varepsilon}(\defects)))^{\perp} \coloneqq \big \{ v_{\perp} \in \Cnew(\Omega_\varepsilon(\defects)) : (v_0, v_{\perp}) = 0 \text{ for every $v_0 \in H_0^2(\Omega_{\varepsilon}(\defects))$} \big \}.
\end{equation*}
The generic element $v_{\perp}\in (H^2_0(\Omega_{\varepsilon}(\defects)))^{\perp}$ must have affine boundary conditions, i.e., 
\begin{equation}\label{eq_nonsichiama}
	\begin{cases}
	v_{\perp} = \partial_n v_{\perp} = 0 & \text{on $\partial \Omega$,} \\
	v_{\perp} = a_0^i + a_1^i x_1 + a_2^i x_2 & \text{on $\partial B_{\varepsilon}^i$\,, for every $i = 1, \ldots, N$,} \\
	\partial_n v_{\perp} = a_1^i n_1+a_2^i n_2 & \text{on $\partial B_{\varepsilon}^i$\,, for every $i = 1, \ldots, N$.}
	\end{cases}
\end{equation}
where $A\coloneqq (a_0^1, a_1^1, a_2^1, \ldots, a_0^N, a_1^N, a_2^N)\in\mathbb{R}^{3N}$ is the vector containing the coefficients on the~$N$ affine functions on the inner boundaries $\partial B_\varepsilon^i$ ($i=1,\ldots,N$) of $\Omega_{\varepsilon}(\defects)$.
Given $A\in\mathbb{R}^{3N}$, we will denote by $v_{\perp}^{A}$ the function in $(H^2_0(\Omega_{\varepsilon}(\defects)))^{\perp}$ that satisfies  \eqref{eq_nonsichiama}.
Moreover, for any $v_0 \in H^2_0(\Omega_{\varepsilon})$, by orthogonality
\begin{equation}
\label{eq: 250920241440}
    (v_{\perp}^{A}, v_0) = 0 \quad \Leftrightarrow \quad (1-\nu) \int_{\Omega_{\varepsilon}} \nabla^2 v_0 : \nabla^2 v_{\perp}^{A} \,\ud x - \nu \int_{\Omega_{\varepsilon}} [v_0, v_{\perp}^{A}]\,\ud x = 0.
\end{equation}
By applying Lemma~\ref{Lemma: Ciarlet's lemma} with $\xi = v_0$\,, $\eta = v_{\perp}^{A}$\,, and $\chi = 1$, the  term $\int_{\Omega_{\varepsilon}} [v_0, v_{\perp}^{A}]\,\ud x$ vanishes; therefore, the orthogonality condition~\eqref{eq: 250920241440} reduces to
\begin{equation}\label{eq: 110241726}
(v_{\perp}^A,v_0)=0 \quad\Leftrightarrow\quad \int_{\Omega_{\varepsilon}} \nabla^2 v_0 : \nabla^2 v_{\perp}^{A} \,\ud x = 0, \qquad \text{for every $v_0 \in H^2_0(\Omega_{\varepsilon}(\defects))$.}
\end{equation}
Since the second condition in~\eqref{eq: 110241726} is the weak formulation of the biharmonic equation, it follows that the generic element $v_{\perp}^{A}\in(H^2_0(\Omega_{\varepsilon}(\defects)))^{\perp}$ is the unique weak solution to the following Dirichlet problem
\begin{equation}\label{eq: 11020241240}
\begin{cases}
\Delta^2 v = 0 & \text{in $H^{-2}(\Omega_{\varepsilon}(\defects))$,} \\
v = \partial_n v = 0 & \text{on $\partial \Omega$,} \\
v = a_0^i + a_1^i x_1 + a_2^i x_2 & \text{on $\partial B_{\varepsilon}^i$ for every $i = 1, \ldots, N$,} \\
\partial_n v = a_1^i n_1+ a_2^i n_2 & \text{on $\partial B_{\varepsilon}^i$ for every $i = 1, \ldots, N$.}
\end{cases}
\end{equation} 
For any $A \in \mathbb{R}^{3N}$, the function $v_{\perp}^{A}$ belongs to $H^{m}(\Omega_{\varepsilon}(\defects))$ if $\partial \Omega \in \Ccal^{m}$ for $m \ge 4$ (see \cite[Theorem~2.20]{Gazzola09}). 
Since $\Cnew(\Omega_\varepsilon(\defects)) = H_0^2(\Omega_{\varepsilon}(\defects)) \oplus (H^2_0(\Omega_{\varepsilon}(\defects)))^{\perp} $, any $v \in \Cnew(\Omega_\varepsilon(\defects))$ can then be uniquely decomposed as 
\begin{equation}\label{eq:110241530}
v = v_0 + v_{\perp}^{A}\,,
\end{equation}
where $v_{\perp}^{A}$ which is the unique solution to problem \eqref{eq: 11020241240} (and is determined by the values of $v$ on $\partial\Omega_\varepsilon(\defects)$), and $v_0\in H^2_0(\Omega_{\varepsilon}(\defects))$ is determined by orthogonality.

Owing to the linearity of problem \eqref{eq: 11020241240}, we may deduce that $v_{\perp}^{A}$ depends linearly on the coefficients $A \in \mathbb{R}^{3N}$.
Then it can be expressed as
\begin{equation}\label{260920241218}
v^{A}_{\perp}(x) =  \sum\limits_{i=1}^N \sum\limits_{r=0}^2 a_r^i \, \kappa_r^i(x) =\langle A,\kappa(x)\rangle,\qquad\text{for $x\in\Omega_\varepsilon(\defects)$,}
\end{equation}
where $\kappa \coloneqq (\kappa^1_0, \kappa^1_1, \kappa^1_2, \ldots, \kappa^N_0, \kappa^N_1, \kappa^N_2)\colon\Omega_\varepsilon(\defects)\to\mathbb{R}^{3N}$ is the vector whose elements are obtained from the following cell %formula 
problems: for every $i=1,\ldots,N$, the functions $\kappa^i_0$ and $\kappa^i_r$ ($r=1,2$) are the unique solutions to, respectively,
\begin{equation}\label{eq:2410171611}
\begin{cases}
\Delta^2 \kappa_0 = 0 & \text{in $\Omega_{\varepsilon}(\defects)$,} \\
\kappa_0 = \partial_n \kappa_0 = 0 & \text{on $\partial\Omega$,} \\
\kappa_0 = 1 & \text{on $\partial B_{\varepsilon}^i$\,,} \\
\partial_n \kappa_0 = 0 & \text{on $\partial B_{\varepsilon}^i$\,,} \\
\kappa_0 = \partial_n \kappa_0 = 0 & \text{on $\partial B_{\varepsilon}^I$, if $I \ne i$,} 
\end{cases}
\quad\text{and}\quad
\begin{cases}
\Delta^2 \kappa_r = 0 & \text{in $\Omega_{\varepsilon}(\defects)$,} \\
\kappa_r = \partial_n \kappa_r = 0 & \text{on $\partial\Omega$,} \\
\kappa_r = x_r & \text{on $\partial B_{\varepsilon}^i$\,,} \\
\partial_n \kappa_r = n_r & \text{on $\partial B_{\varepsilon}^i$\,,} \\
\kappa_r = \partial_n \kappa_r = 0 & \text{on $\partial B_{\varepsilon}^I$, if $I \ne i$.}
\end{cases}	
\end{equation}
These cell formulas depend on the geometry and topology of the domain but not
on the mechanical parameters of the problem.
One advantage of these cell formulas is that they  simplify the treatment of the boundary conditions for the stress: by ensuring homogeneous Neumann conditions, they transform the minimization problem in $\Cnew(\Omega_\varepsilon(\defects))$ into a finite-dimensional algebraic minimization, see also \cite{CARBONARA7,PETROLO20042471}.
In particular, $\kappa_0^i$ determines the constant part of the solution $v_\perp^A$ at the boundary $\partial B_\varepsilon^i$\,; similarly, $\kappa_1^i$ and $\kappa_2^i$ determine the $x_1$- and $x_2$ contributions of $v_\perp^A$ at the boundary $\partial B_\varepsilon^i$\,.
We would like to stress that formulas \eqref{eq:2410171611} serve as a preliminary step toward studying the homogenization of a system of regularized dislocations and disclinations as the core radius vanishes.

As a consequence of \eqref{eq:2410171611}, we have the following remark.
\begin{remark}\label{rem_reg}
Regularity results for higher order Dirichlet problems (see again \cite[Theorem~2.20]{Gazzola09}) ensure that for every $i=1,\ldots,N$ and every $r=0,1,2$ the function $\kappa_r^i \in H^{m}(\Omega_{\varepsilon}(\defects))$ if $\partial \Omega \in \Ccal^{m}$ for $m \ge 4$.
By \eqref{260920241218}, the same regularity is inherited by $v_\perp^A$\,.
\end{remark}
We conclude by computing two crucial ingredients for Proposition~\ref{2408291024} below: the norm of $\nabla^2 v_{\perp}^{A}$ and the matrix $\mathcal{M} \in \mathbb{R}^{3N\times3N}_{\sym}$. 
Letting $\mathcal{K}_r^i \coloneqq \nabla^2 \kappa_r^i\colon\Omega_\varepsilon(\defects)\to\mathbb{R}^{2\times2}_{\sym}$, from \eqref{260920241218} we have that
\begin{equation}\label{11020241610}
\nabla^2 v^{A}_{\perp}(x) = \sum\limits_{i=1}^N \sum\limits_{r=0}^2 a_r^i \hspace{0.2em} \mathcal{K}_r^i(x),
\;\; 
\text{for $x\in\Omega_\varepsilon(\defects)$,}
\;\;
\text{and}
\;\;
\lVert \nabla^2 v_{\perp}^{A} \lVert_{L^2(\Omega_{\varepsilon}, \mathbb{R}^{2\times2})}^2 = \langle\mathcal{M} {A},A\rangle,
\end{equation}
where $\mathcal{M}$ is the $(3N)\times(3N)$ block matrix, whose blocks $\mathcal{M}_{ij}$ ($i,j=1,\ldots,N$) are the $3\times3$ matrices defined as
\begin{equation}\label{eq:202411261011}
\big(\mathcal{M}_{ij}\big)_{rs} \coloneqq \int_{\Omega_\varepsilon} \mathcal{K}^i_r(x) : \mathcal{K}^j_s(x)\,\ud x,\qquad \text{for $r,s=0,1,2$;}
\end{equation}
notice that $\mathcal{M}$ is symmetric and, thanks to~\eqref{11020241610}, it is positive definite.

\subsection{Mechanical equilibrium in the Airy potential formulation}\label{2503091640}
\begin{proposition}\label{2408291022}
Let $\Omega\subset\R^2$ be of class $\Ccal^2$ and as in \eqref{eq_Omega}.
Let $\alpha\in\ED(\Omega)$ and $\theta\in\WD(\Omega)$ satisfy \eqref{eq_disj_supp}, let $\defects$ be as in \eqref{eq_defects}, and let $\varepsilon\in(0,\varepsilon_0)$.
Then the minimum problem \eqref{2502131930} has a unique minimizer $\hat{v}$, which is characterized by
\begin{equation}\label{2408291139}
\!\!\!
0=\frac{1+\nu}{E}
\int_{\Omega_{\varepsilon}}\!\!\! \big[\nabla^2 \hat{v}:\nabla^2\phi-\nu\Delta \hat{v}\,\Delta\phi\big]\,\ud x +\sum_{j=1}^J \ave_{\partial B_{\ce}^j} \!\langle\nabla \phi, \Pi(b^j)\rangle\,\ud\Huno +\sum_{k=1}^K \ave_{\partial B_\ce^k} \! s^k\phi\,\ud\Huno\,
\end{equation}
for every $\phi\in\Cnew(\Omega_\varepsilon(\defects))$.
\end{proposition}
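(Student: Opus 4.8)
The plan is to recognize \eqref{2502131930} as the minimization of a strictly convex, coercive quadratic functional over a Hilbert space, so that existence and uniqueness follow from the direct method (equivalently, from Lax--Milgram), while \eqref{2408291139} is simply the Euler--Lagrange equation obtained by annihilating the first variation.

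First I would fix the functional-analytic setting. Since $\Cnew(\Omega_\varepsilon(\defects))$ is defined inside $H^2_0(\Omega)$ by the constraints ``$v$ is affine on each $B_\ce^i$'', and affine functions span a finite-dimensional (hence closed) space, these constraints are preserved under $H^2$-convergence; thus $\Cnew(\Omega_\varepsilon(\defects))$ is a closed subspace of $H^2_0(\Omega)$, in particular a Hilbert space for the $H^2$ inner product. Next I would establish coercivity of the bulk term $\G(\cdot;\Omega_\varepsilon)$: because every $v\in\Cnew(\Omega_\varepsilon(\defects))$ is affine on the cores, $\nabla^2 v\equiv 0$ there, so $\int_{\Omega_\varepsilon}|\nabla^2 v|^2\,\ud x=\int_{\Omega}|\nabla^2 v|^2\,\ud x$ and likewise for $(\Delta v)^2$. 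Using the pointwise bound $(\Delta v)^2\le 2|\nabla^2 v|^2$ valid in two dimensions, one gets $|\nabla^2 v|^2-\nu(\Delta v)^2\ge \min\{1,\,1-2\nu\}\,|\nabla^2 v|^2$, with $\min\{1,\,1-2\nu\}>0$ since $\nu\in(-1,1/2)$. Combined with the Poincar\'e inequality on $H^2_0(\Omega)$ (which bounds $\norma{v}_{H^2(\Omega)}$ by $\norma{\nabla^2 v}_{L^2(\Omega)}$), this yields $\G(v;\Omega_\varepsilon)\ge c\,\norma{v}_{H^2(\Omega)}^2$; equivalently, the associated symmetric bilinear form --- which, after rewriting $\nu(\Delta v)^2$ in Monge--Amp\`ere form, is exactly the product $(\cdot,\cdot)$ of \eqref{eq_formabilin} --- is equivalent to the $H^2$ scalar product on $\Cnew(\Omega_\varepsilon(\defects))$.

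Then I would show that the two families of boundary integrals define a bounded linear functional on $\Cnew(\Omega_\varepsilon(\defects))$: for each hole the trace maps $v\mapsto v|_{\partial B_\ce^i}$ and $v\mapsto\nabla v|_{\partial B_\ce^i}$ are bounded from $H^2(\Omega_\varepsilon)$ into $L^2(\partial B_\ce^i)$ (trace theorem followed by the Sobolev embedding on the circle), so the linear part of $\I$ is controlled by $C\big(\sum_j|b^j|+\sum_k|s^k|\big)\norma{v}_{H^2(\Omega)}$. With these ingredients $\I(\cdot;\Omega_\varepsilon)=\tfrac12 a(v,v)+\ell(v)$ with $a$ bounded, symmetric and coercive and $\ell$ bounded and linear; hence $\I$ is strictly convex, coercive and weakly lower semicontinuous on the Hilbert space $\Cnew(\Omega_\varepsilon(\defects))$, and therefore admits a unique minimizer $\hat v$. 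Finally, $\hat v$ is characterized by $\frac{\ud}{\ud t}\I(\hat v+t\phi;\Omega_\varepsilon)\big|_{t=0}=0$ for every $\phi\in\Cnew(\Omega_\varepsilon(\defects))$: the bulk term differentiates to $\frac{1+\nu}{E}\int_{\Omega_\varepsilon}[\nabla^2\hat v:\nabla^2\phi-\nu\Delta\hat v\,\Delta\phi]\,\ud x$, while each surface integral, being linear in $v$, differentiates to itself, giving precisely \eqref{2408291139} (here I use $\ave_{\partial B_\ce^i}=\tfrac{1}{2\pi\ce}\int_{\partial B_\ce^i}$).

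The main obstacle, although still fairly standard, is the coercivity step: one must carefully exploit that the competitors are affine on the cores (so that the bulk integral over $\Omega_\varepsilon$ in fact captures the full $H^2$-seminorm over $\Omega$) and that the possibly indefinite term $-\nu(\Delta v)^2$ does not destroy positivity, which is where the two-dimensional estimate $(\Delta v)^2\le 2|\nabla^2 v|^2$ together with $\nu<1/2$ enters. Closedness of $\Cnew(\Omega_\varepsilon(\defects))$ and continuity of the boundary traces are then routine.
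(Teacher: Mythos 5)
Your proposal is correct and follows essentially the same route as the paper: the direct method on the weakly closed class $\Cnew(\Omega_\varepsilon(\defects))\subset H^2_0(\Omega)$, coercivity of $\G$ via Friedrichs' inequality, strict convexity for uniqueness, and the vanishing first variation to obtain \eqref{2408291139}. The additional details you spell out (closedness of the constraint, the pointwise bound $(\Delta v)^2\le 2|\nabla^2 v|^2$ together with $\nu<1/2$, and the trace estimate controlling the boundary terms) simply make explicit the coercivity and continuity that the paper's proof invokes without elaboration.
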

\begin{proof}
We start by applying the direct method of the calculus of variations to prove the existence of a minimizer for $\I$ in $\Cnew(\Omega_\varepsilon(\defects))$.
Let $\{v_m\}\subset \Cnew(\Omega_\varepsilon(\defects)$ be a minimizing sequence; by the coercivity of $\G$ and thanks to the Friedrich's inequality, there exists $M>0$ such that $\lVert v_m \rVert^2_{H^2(\Omega)}\leq M$.
Consequently, owing to the fact that $\Cnew(\Omega_\varepsilon(\defects))$ is weakly closed with respect to the $H^2$ convergence, they converge, up to the extraction of a subsequence to a function $\hat{v}\in\Cnew(\Omega_\varepsilon(\defects))$ which is a minimizer of $\I$. 
This minimizer is unique, thanks to the strict convexity of $\I$.
To prove the characterization \eqref{2408291139}, it suffices to impose that the first variation of~$\I$ vanish.
\end{proof}

Relying on the orthogonal decomposition from Section~\ref{sec:251124} and on the structure \eqref{260920241218} of $v_\perp^A$, we now prove that the minimization problem \eqref{2502131930} is equivalent to minimizing the energy $\I(\cdot,\Omega_\varepsilon)$ over a $3N$-dimensional class of competitors. 
\begin{proposition}\label{2408291024}
Let $\Omega\subset\R^2$ be as in \eqref{eq_Omega} and let us assume that $\partial\Omega$ is of class $\Ccal^4$.
Let $\alpha\in\ED(\Omega)$ and $\theta\in\WD(\Omega)$ satisfy \eqref{eq_disj_supp}, let $\defects$ be as in \eqref{eq_defects}, and let $\varepsilon\in(0,\varepsilon_0)$.
Then the unique solution $\hat{v}$ to the minimum problem \eqref{2502131930} is given by
\begin{equation}\label{eq:202411261130}
\hat{v}(x) = - \frac{E}{1-\nu^2} \langle \mathcal{M}^{-1} \kappa(x),\Phi\rangle, \qquad\text{for $x\in\Omega_\epsilon(\defects)$,}
\end{equation}
where $\Phi\coloneqq (s^1, b_2^1,- b_1^1, s^2, b_2^2, -b_1^2, \ldots, s^N, b_2^N,- b_1^N)$.
Moreover, $\hat v$ inherits the regularity of the solutions to problems \eqref{eq:2410171611}.
\end{proposition}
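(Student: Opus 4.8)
The plan is to use the orthogonal decomposition $\Cnew(\Omega_\varepsilon(\defects)) = H_0^2(\Omega_\varepsilon(\defects)) \oplus (H_0^2(\Omega_\varepsilon(\defects)))^\perp$ to reduce the infinite-dimensional minimization \eqref{2502131930} to a $3N$-dimensional quadratic optimization, and then solve the latter explicitly. First I would write the generic competitor as $v = v_0 + v_\perp^A$ with $v_0 \in H_0^2(\Omega_\varepsilon(\defects))$ and $v_\perp^A = \langle A, \kappa\rangle$ as in \eqref{260920241218}. The key observation is that, because the bilinear form \eqref{eq_formabilin} makes the two summands orthogonal, the bulk term splits as $\G(v;\Omega_\varepsilon) = \G(v_0;\Omega_\varepsilon) + \G(v_\perp^A;\Omega_\varepsilon)$ (using Lemma~\ref{Lemma: Ciarlet's lemma} with $\chi = 1$ to kill the Monge--Amp\`ere cross term, exactly as done to obtain \eqref{eq: 110241726}). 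For the surface terms, since $v_0 \in H_0^2$ its trace and normal-trace vanish on every $\partial B_\varepsilon^i$, so $\nabla v_0 = 0$ there in the trace sense; hence the boundary integrals $\ave_{\partial B_\varepsilon^j}\langle\nabla v, \Pi(b^j)\rangle\,\ud\Huno$ and $\ave_{\partial B_\varepsilon^k} s^k v\,\ud\Huno$ depend only on $v_\perp^A$, in fact only on the affine data $A$. Therefore $\I(v;\Omega_\varepsilon) = \G(v_0;\Omega_\varepsilon) + \I(v_\perp^A;\Omega_\varepsilon)$, the first term is minimized (uniquely) by $v_0 = 0$, and the problem collapses to minimizing $A \mapsto \I(v_\perp^A;\Omega_\varepsilon)$ over $\mathbb{R}^{3N}$.

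Next I would compute this finite-dimensional functional explicitly. By \eqref{11020241610}, $\G(v_\perp^A;\Omega_\varepsilon) = \tfrac12\langle \mathcal{M}A, A\rangle$ up to the constant $\tfrac12\tfrac{1+\nu}{E}$ — more precisely one must check that the full quadratic form coming from $\I$, namely $\tfrac12\tfrac{1+\nu}{E}\int_{\Omega_\varepsilon}[|\nabla^2 v_\perp^A|^2 - \nu(\Delta v_\perp^A)^2]$, equals $\tfrac12\tfrac{1+\nu}{E}\langle\mathcal{M}A,A\rangle$; this uses once more that $\int_{\Omega_\varepsilon}[v_\perp^A, v_\perp^A]\,\ud x = 0$ via Lemma~\ref{Lemma: Ciarlet's lemma} (with $\xi = \eta = v_\perp^A$, $\chi = 1$), since the integrand there is a null Lagrangian with vanishing boundary contributions — wait, here one must be slightly careful because $v_\perp^A$ does not vanish on $\partial B_\varepsilon^i$; rather, its boundary data is affine, and the Monge--Amp\`ere integral of an affine function (whose Hessian vanishes) still produces zero, so the reduction to $|\nabla^2 v_\perp^A|^2$ goes through. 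For the linear part, using \eqref{260920241218} on $\partial B_\varepsilon^i$ (where $\kappa_r^i = x_r$, $\kappa_0^i = 1$, and the other components vanish) together with $\Pi(b^j) = (b_2^j, -b_1^j)$, one finds $\sum_j \ave_{\partial B_\varepsilon^j}\langle\nabla v_\perp^A, \Pi(b^j)\rangle\,\ud\Huno + \sum_k \ave_{\partial B_\varepsilon^k} s^k v_\perp^A\,\ud\Huno = \langle A, \Phi\rangle$ with $\Phi = (s^1, b_2^1, -b_1^1, \ldots, s^N, b_2^N, -b_1^N)$; the averaging by $1/(2\pi\varepsilon) = 1/|\partial B_\varepsilon^i|$ is what makes the coefficients of $a_0^i$, $a_r^i$ come out as exactly $s^i$ and $(b_2^i, -b_1^i)$ (here the contribution of the $a_1^i x_1 + a_2^i x_2$ part to $\ave v\,\ud\Huno$ over a circle centered at $y^k$ must be handled by centering, or it cancels against the corresponding contribution—this is the one place a short computation is genuinely needed). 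Thus $\I(v_\perp^A;\Omega_\varepsilon) = \tfrac12\tfrac{1+\nu}{E}\langle\mathcal{M}A,A\rangle + \langle A,\Phi\rangle$.

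Finally, since $\mathcal{M}$ is symmetric positive definite (noted after \eqref{eq:202411261011}), this strictly convex quadratic has the unique minimizer $A = -\tfrac{E}{1+\nu}\mathcal{M}^{-1}\Phi$; substituting into $v_\perp^A = \langle A, \kappa\rangle$ and absorbing the factor $\tfrac{1}{1+\nu}$ together with whatever arises from correctly tracking the $1-\nu$ versus $1+\nu$ normalizations gives $\hat v(x) = -\tfrac{E}{1-\nu^2}\langle\mathcal{M}^{-1}\kappa(x),\Phi\rangle$ (the discrepancy between $\tfrac{E}{1+\nu}$ and $\tfrac{E}{1-\nu^2}$ must be reconciled by being careful about whether $\mathcal{M}$ carries the $(1-\nu)$ weight from \eqref{eq_formabilin} or only the bare $\int \mathcal{K}_r^i : \mathcal{K}_s^j$ as in \eqref{eq:202411261011} — per \eqref{eq:202411261011} it is the bare one, so the full bulk quadratic form is $\tfrac12\tfrac{1+\nu}{E}(1-\nu)\langle\mathcal{M}A,A\rangle + (\text{Monge--Amp\`ere, }=0)$, giving factor $\tfrac{E}{(1+\nu)(1-\nu)} = \tfrac{E}{1-\nu^2}$ as claimed). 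The regularity statement is then immediate: by Remark~\ref{rem_reg} each $\kappa_r^i$ inherits $H^m$ regularity when $\partial\Omega \in \Ccal^m$, hence so does the linear combination $\hat v$.

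The main obstacle I anticipate is the bookkeeping in the second and third paragraphs: correctly verifying that all Monge--Amp\`ere cross terms and self terms vanish (the boundary data on $\partial B_\varepsilon^i$ is affine, not zero, so one cannot simply cite $H_0^2$; one needs that the Monge--Amp\`ere integrand is a null Lagrangian and that the boundary terms it generates involve only first derivatives of affine functions), and pinning down the exact constant — which normalization ($1-\nu$, $1+\nu$, or $1-\nu^2$) is carried by $\mathcal{M}$, and how the circle-averaging $\ave_{\partial B_\varepsilon^i}$ with $|\partial B_\varepsilon^i| = 2\pi\varepsilon$ converts the raw boundary integrals in \eqref{eq_energyperforated} into the clean pairing $\langle A, \Phi\rangle$. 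Everything else is a routine application of orthogonality plus the unconstrained minimization of a positive-definite quadratic form.
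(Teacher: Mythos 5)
Your overall route is the same as the paper's: decompose $v=v_0+v_\perp^A$, split the energy by orthogonality, reduce \eqref{2502131930} to the finite-dimensional quadratic $\frac{1-\nu^2}{2E}\langle\mathcal{M}A,A\rangle+\langle\Phi,A\rangle$, minimize explicitly, and read off regularity from Remark~\ref{rem_reg}; you also reconcile the constant $E/(1-\nu^2)$ correctly. The one step whose justification would not survive scrutiny is the vanishing of the Monge--Amp\`ere self-term $\int_{\Omega_\varepsilon}[v_\perp^A,v_\perp^A]\,\ud x$. As you yourself suspect, Lemma~\ref{Lemma: Ciarlet's lemma} is inapplicable: none of $v_\perp^A$, $v_\perp^A$, $1$ belongs to $H^2_0(\Omega_\varepsilon(\defects))$. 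But your substitute argument --- that ``the Monge--Amp\`ere integral of an affine function (whose Hessian vanishes) still produces zero'' and that the boundary terms ``involve only first derivatives of affine functions'' --- is not correct as stated: $v_\perp^A$ is affine only in trace, not in the bulk, and the boundary term produced by integrating by parts, $\int_{\partial\Omega_\varepsilon}\langle\cof(\nabla^2 v_\perp^A)\,n,\nabla v_\perp^A\rangle\,\ud\Huno$, involves the full Hessian of $v_\perp^A$ on the boundary. What actually kills it is that an $H^4$ function with affine boundary trace satisfies $\nabla^2 v_\perp^A\,t=0$ on $\partial\Omega_\varepsilon$ (\cite[Proposition~A.2]{CDLM2024}), combined with $\cof(M)=\Pi^\top M\Pi$, $\Pi n=-t$, and $\Div(\cof\nabla^2\cdot)=0$; this is precisely Lemma~\ref{Lemma:2410171413}, and it is also why the hypothesis $\partial\Omega\in\Ccal^4$ (giving $v_\perp^A\in H^4(\Omega_\varepsilon(\defects))$ via Remark~\ref{rem_reg}) is needed already at this stage, not only for the final regularity claim.

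On the linear term: your centering worry for the disclination contribution is real and cannot be left as ``a short computation genuinely needed.'' Since $\ave_{\partial B_\varepsilon^k}(a_0^k+a_1^kx_1+a_2^kx_2)\,\ud\Huno=a_0^k+a_1^ky_1^k+a_2^ky_2^k$, the pairing collapses to $\sum_k s^k a_0^k=\langle\Phi_d,A\rangle$ with the stated $\Phi$ only if the affine data are written in coordinates centered at the defects (the dislocation term is exact as you wrote it, because $\nabla v_\perp^A$ is constant on $\partial B_\varepsilon^j$); the paper's proof records the average as $a_0^k$, i.e.\ it uses that convention, and to land exactly on \eqref{eq:202411261130} you must fix it explicitly or adjust $\Phi_d$ accordingly. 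Once these two points are repaired, the remaining steps (the cross term $(v_0,v_\perp^A)=0$ by construction of the orthogonal complement, $v_0\equiv0$ at the minimum, positive definiteness of $\mathcal{M}$, $\widehat A=-\frac{E}{1-\nu^2}\mathcal{M}^{-1}\Phi$, and the regularity statement) coincide with the paper's proof.
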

\begin{proof}
Let us consider $v \in \Cnew(\Omega_\varepsilon(\defects))$; %and substitute Eqn. 
by plugging the additive decomposition~\eqref{eq:110241530} into the energy in \eqref{eq_energyperforated}, we get
\begin{equation}\label{eq:202411261041}
\begin{split}
\I(v;\Omega_{\varepsilon}) =&\, \I(v_0 + v_{\perp}^{A};\Omega_{\varepsilon}) \\
=&\, \G(v_0 + v_{\perp}^{A};\Omega_{\varepsilon}) + \sum_{j=1}^J\frac{1}{2\pi\ce}\int_{\partial B_{\ce}^j} \langle\nabla v_{\perp}^{A}, \Pi(b^j)\rangle\,\ud\Huno
+\sum_{k=1}^K \frac{s^k}{2\pi\ce}\int_{\partial B_\ce^k} v_{\perp}^{A} \,\ud\Huno.
\end{split}
\end{equation}
By the identity $[v,v] = (\Delta v)^2 - |\nabla^2 v|^2 $, holding true a.e.~in $\Omega_\varepsilon(\defects)$, and recalling~\eqref{eq_formabilin}, we have that 
$\mathcal{G}(v;\Omega) = \frac12 \lVert v \lVert^2$; then
\begin{equation}\label{eq_elenG}
\G(v_0 + v_{\perp}^{A};\Omega_{\varepsilon}) = \G(v_0;\Omega_{\varepsilon}) + (v_0, v_{\perp}^{A}) + \G(v_{\perp}^{A};\Omega_{\varepsilon}) = \G(v_0;\Omega_{\varepsilon})+ \G(v_{\perp}^{A};\Omega_{\varepsilon}), 
\end{equation}
by orthogonality. 
Invoking that $\partial \Omega \in \Ccal^4$, we obtain that $v_{\perp}^{A} \in H^4(\Omega_{\varepsilon}(\defects))$ (see Remark~\ref{rem_reg}) and we may employ Lemma~\ref{Lemma:2410171413} with $\xi= \eta=v_{\perp}^{A}$ and $\chi=1$ to conclude that
\begin{equation}
\int_{\Omega_\varepsilon} [v_{\perp}^{A}, v_{\perp}^{A}] \,\ud x = 0, \qquad \text{for every $A \in \mathbb{R}^{3N}$,}
\end{equation}
so that, by~\eqref{11020241610}, the elastic energy in \eqref{eq_elenG} reduces to
\begin{equation}\label{eq:202411261051}
\G(v_0 + v_{\perp}^{A};\Omega_{\varepsilon}) = \G(v_0;\Omega_{\varepsilon}) + \frac{1-\nu^2}{2E} \langle\mathcal{M} {A},A\rangle.
\end{equation}
We now turn our attention to the boundary integrals in \eqref{eq:202411261041}. 
Since membership in $\Cnew(\Omega_\varepsilon(\defects))$ means having an affine trace on $\partial\Omega_\varepsilon(\defects)$, we have
\begin{equation} \label{eq:202411261055}
\begin{split}
\sum_{j=1}^J\frac{1}{2\pi\ce}\int_{\partial B_{\ce}^j} \langle\nabla v, \Pi(b^j)\rangle\,\ud\Huno =&\, \sum_{j=1}^J \langle ({a}^j_1, {a}^j_2)^\top, \Pi(b^j)\rangle = \langle \Phi_D , A \rangle,\\ 
\sum_{k=1}^K \frac{s^k}{2\pi\ce}\int_{\partial B_\ce^k} v\,\ud\Huno = &\, \sum_{k=1}^K s^k a^k_0 = \langle \Phi_d , A \rangle,
\end{split}
\end{equation}
where we set, recalling~\eqref{eq_sb_general},
\begin{equation*}% \label{eq:2410181027}
\begin{split}
\Phi_D \coloneqq &\, (0, b_2^1,- b_1^1, 0, b_2^2, b_1^2, \hdots, 0, b_2^N,- b_1^N)^\top \in \mathbb{R}^{3N}\\
\Phi_d \coloneqq &\, (s^1, 0, 0, s^2, 0, 0, \hdots, s^N, 0, 0)^\top \in \mathbb{R}^{3N}.
\end{split}
\end{equation*}
Combining \eqref{eq:202411261051} and \eqref{eq:202411261055} and defining $\Phi \coloneqq \Phi_D + \Phi_d$, the expression in \eqref{eq:202411261041} becomes
\begin{equation}\label{eq_611}
\begin{split}
\I(v;\Omega_{\varepsilon}) =
\G(v_0;\Omega_{\varepsilon}) + \frac{1-\nu^2}{2E} \langle \mathcal{M} A, A \rangle + \langle \Phi, A \rangle.
\end{split}
\end{equation}
Denoting by $\hat{v}$ the unique minimizer of $\I$ in $\Cnew(\Omega_\varepsilon(\defects))$ (see Proposition \ref{2408291022}), owing to~\eqref{eq: 11020241240}, \eqref{eq:110241530}, and~\eqref{eq_611}, we have
\begin{equation*}
\begin{split}
\I(\hat{v};\Omega_{\varepsilon})= &\, \min \big\{ \I(v;\Omega_{\varepsilon}) : v \in \Cnew(\Omega_\varepsilon(\defects)) \big\} \\
=&\, \min \big\{ \I(v_0 + v^{A}_{\perp};\Omega_{\varepsilon}) : v_0 \in H^2_0(\Omega_\varepsilon(\defects)), A \in \mathbb{R}^{3N} \big\} \\
=&\, \min \bigg\{ \G(v_0;\Omega_{\varepsilon}) + \frac{1-\nu^2}{2E} \langle \mathcal{M} A, A\rangle + \langle \Phi, A \rangle : v_0 \in H^2_0(\Omega_\varepsilon(\defects)), A \in \mathbb{R}^{3N} \bigg\} \\
=&\, \min \big\{ \G(v_0;\Omega_{\varepsilon}) : v_0 \in H^2_0(\Omega_{\varepsilon})\big\} + \min \bigg\{ \frac{1-\nu^2}{2E} \langle \mathcal{M} A, A\rangle + \langle \Phi, A \rangle : {A} \in \mathbb{R}^{3N} \bigg\}\\
=&\, \min \bigg\{ \frac{1-\nu^2}{2E} \langle \mathcal{M} A, A\rangle + \langle \Phi, A \rangle : {A} \in \mathbb{R}^{3N} \bigg\}.
\end{split}
\end{equation*}
Indeed, since we noticed that $\G(\cdot;\Omega_{\varepsilon})$ is a norm in $H^2_0(\Omega_\varepsilon)$, then $\min \{ \G(v_0;\Omega_{\varepsilon}) : v_0 \in H^2_0(\Omega_{\varepsilon})\} = 0$ and $v_0 \equiv 0$. Furthermore, it can be easily verified that
\begin{equation}
\min_{{A} \in \mathbb{R}^3} \bigg\{ \frac{1-\nu^2}{2E} \langle \mathcal{M} A, A\rangle + \langle \Phi, A \rangle  \bigg\} = -\frac{E}{2(1-\nu^2)} \langle \mathcal{M}^{-1} \Phi, \Phi \rangle
\end{equation}
which is attained at 
\begin{equation}
\widehat A \coloneqq \argmin_{A \in \mathbb{R}^{3N}} \bigg\{ \frac{1-\nu^2}{2E} \langle \mathcal{M} A, A\rangle + \langle \Phi, A \rangle \bigg\} = -\frac{E}{1-\nu^2} \mathcal{M}^{-1} \Phi.
\end{equation}
Therefore, recalling \eqref{260920241218}, we have that $\hat v= v_{\perp}^{\widehat{A}} = -\frac{E}{1-\nu^2}\langle\mathcal{M}^{-1} \Phi, \kappa \rangle$,
which is \eqref{eq:202411261130}. 

The regularity of $\hat{v}$ is a consequence of its linear structure and Remark~\ref{rem_reg}.
\end{proof}

We now give  the strong form of \eqref{2408291139}, relying on the improved regularity of $\hat{v}$ from Proposition~\ref{2408291024}.
\begin{proposition}
\label{cor_strongEL}
Let $\Omega\subset\R^2$ be as in \eqref{eq_Omega}, 
let $\alpha\in\ED(\Omega)$ and $\theta\in\WD(\Omega)$ be such that \eqref{eq_disj_supp} holds true, and let $\defects$ be as in \eqref{eq_defects}. 
If the solution $\hat{v}$ to the minimum problem \eqref{2502131930} belongs to $H^4(\Omega_\varepsilon(\defects))\cap\Cnew(\Omega_\varepsilon(\defects))$, then the conditions \eqref{2408291139} read
\begin{subequations}\label{eq_strong_EL_v_min}
\begin{equation}\label{eq_strong_EL_v_min_bulk}
\frac{1-\nu^2}{E}\Delta^2 \hat{v}=0 \qquad\text{in $L^2(\Omega_\varepsilon(\defects))$}
\end{equation}
and 
\begin{equation}\label{eq_strong_EL_v_min_BC}
\begin{cases}
\displaystyle \frac{1-\nu^2}{E} \int_{\partial B_\varepsilon^i} \partial_n(\Delta \hat{v})\,\ud\Huno=\tilde\theta(\Omega^i)\,,\\[3mm]
\displaystyle \frac{1-\nu^2}{E} \int_{\partial B_\varepsilon^i} \bigg(x_1\partial_t(\Delta \hat{v})-x_2\partial_n(\Delta \hat{v})+\frac{(\nabla^2 \hat{v}\,t)_1}{1-\nu}\bigg)\,\ud\Huno=(\tilde\alpha(\Omega^i))_1\,,\\[3mm]
\displaystyle \frac{1-\nu^2}{E} \int_{\partial B_\varepsilon^i} \bigg(x_1\partial_n(\Delta \hat{v})+x_2\partial_t(\Delta \hat{v})+\frac{(\nabla^2 \hat{v}\,t)_2}{1-\nu}\bigg)\,\ud\Huno=(\tilde\alpha(\Omega^i))_2\,,
\end{cases}
 \end{equation}
for every $i=1,\ldots,N$, where $n$ is the outer unit normal to $\Omega_\varepsilon(\defects)$, the boundaries are oriented clockwise, and $\tilde\alpha$ and $\tilde\theta$ are defined in \eqref{eq_extended_measures}.
\end{subequations}
\end{proposition}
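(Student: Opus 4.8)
The plan is to upgrade the weak Euler--Lagrange identity \eqref{2408291139} to its strong form by integration by parts, the hypothesis $\hat v\in H^4(\Omega_\varepsilon(\defects))$ making every step rigorous. The backbone is a Green-type formula for the bilinear form appearing in \eqref{2408291139}: for $w\in H^4(\Omega_\varepsilon(\defects))$ and $\phi\in H^2(\Omega_\varepsilon(\defects))$,
\begin{equation*}
\int_{\Omega_\varepsilon}\big[\nabla^2 w:\nabla^2\phi-\nu\Delta w\,\Delta\phi\big]\,\ud x
=(1-\nu)\int_{\Omega_\varepsilon}\Delta^2 w\,\phi\,\ud x
+\int_{\partial\Omega_\varepsilon}\Big[m_n(w)\,\partial_n\phi-\big((1-\nu)\,\partial_n(\Delta w)+\partial_t m_{nt}(w)\big)\phi\Big]\,\ud\Huno,
\end{equation*}
where $m_n(w)\coloneqq\tfrac{\partial^2 w}{\partial n^2}-\nu\Delta w$ and $m_{nt}(w)\coloneqq\langle\nabla^2 w\,n,t\rangle$ are the usual Kirchhoff plate boundary operators, and where the term carrying $\partial_t\phi$ has already been integrated by parts along the (closed) connected components of $\partial\Omega_\varepsilon(\defects)$, so that no endpoint contributions survive. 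This identity follows from a routine double integration by parts combined with the splitting $\nabla\phi=(\partial_n\phi)\,n+(\partial_t\phi)\,t$ on the boundary.

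First I would test \eqref{2408291139} with $\phi\in\mathcal C^\infty_c(\Omega_\varepsilon(\defects))$: all boundary terms of the Green formula as well as the averaged integrals over $\partial B_\varepsilon^j,\partial B_\varepsilon^k$ vanish, leaving $\tfrac{1-\nu^2}{E}\int_{\Omega_\varepsilon}\Delta^2\hat v\,\phi\,\ud x=0$ for every such $\phi$; since $\hat v\in H^4$ this yields $\tfrac{1-\nu^2}{E}\Delta^2\hat v=0$ in $L^2(\Omega_\varepsilon(\defects))$, which is \eqref{eq_strong_EL_v_min_bulk}. With \eqref{eq_strong_EL_v_min_bulk} in hand I would then insert the Green formula into \eqref{2408291139} for an arbitrary $\phi\in\Cnew(\Omega_\varepsilon(\defects))$: the bulk integral drops, and since $\Cnew(\Omega_\varepsilon(\defects))\subset H^2_0(\Omega)$ the outer boundary $\partial\Omega$ contributes nothing ($\phi=\partial_n\phi=0$ there). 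On each $\partial B_\varepsilon^i$ the traces are the affine data $\phi=a^i=a_0^i+a_1^i x_1+a_2^i x_2$, $\partial_n\phi=a_1^i n_1+a_2^i n_2$, $\nabla\phi=(a_1^i,a_2^i)$, so the averaged terms reduce to $\langle(a_1^i,a_2^i),\Pi(b^i)\rangle$ and $s^i a^i(\xi^i)$ (recall $\ave_{\partial B_\varepsilon^i}a^i\,\ud\Huno=a^i(\xi^i)$ and the convention \eqref{eq_sb_general}). The resulting equation is linear in the $3N$ coefficients $(a_0^i,a_1^i,a_2^i)_{i=1}^N$, which can be prescribed arbitrarily and independently: for a single prescribed affine datum on $\partial B_\varepsilon^i$ one builds a competitor in $\Cnew(\Omega_\varepsilon(\defects))$ by cutting that affine function off away from $B_\varepsilon^i$ and from the remaining holes and from $\partial\Omega$ (equivalently, one may test against the cell functions $\kappa^i_r$ of \eqref{eq:2410171611}). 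Matching the coefficients of $a_0^i$, $a_1^i$, $a_2^i$ for each $i$ produces $1+2$ scalar conditions on each $\partial B_\varepsilon^i$; the $a_0^i$-condition is immediate, since $\int_{\partial B_\varepsilon^i}\partial_t m_{nt}(\hat v)\,\ud\Huno=0$ on the closed curve, leaving $\tfrac{1-\nu^2}{E}\int_{\partial B_\varepsilon^i}\partial_n(\Delta\hat v)\,\ud\Huno=s^i=\tilde\theta(\Omega^i)$, which is the first line of \eqref{eq_strong_EL_v_min_BC}.

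The main work, and the step I expect to be the obstacle, is recasting the two $a_r^i$-conditions ($r=1,2$) into the Michell form of \eqref{eq_strong_EL_v_min_BC}. This requires: integrating $\partial_t m_{nt}(\hat v)$ against $x_r$ by parts along $\partial B_\varepsilon^i$ (a closed curve), which produces a term involving $\int_{\partial B_\varepsilon^i}m_{nt}(\hat v)\,t_r\,\ud\Huno$; decomposing $\nabla^2\hat v$ in the orthonormal frame $(n,t)$ and using the action of $\Pi$ on that frame together with \eqref{eq_202409051159} to reorganize the second-order boundary terms into the quantities $\tfrac{1}{1-\nu}(\nabla^2\hat v\,t)_r$; invoking the harmonicity of $\Delta\hat v$ (a consequence of \eqref{eq_strong_EL_v_min_bulk}) and the conjugate-harmonic (Cauchy--Riemann) relations to trade the $x_r\,\partial_n(\Delta\hat v)$ terms for the mixed combinations $x_1\partial_t(\Delta\hat v)-x_2\partial_n(\Delta\hat v)$ and $x_1\partial_n(\Delta\hat v)+x_2\partial_t(\Delta\hat v)$; and absorbing the constants $s^i\xi^i_r$ coming from the disclination term by writing $x_r=(x_r-\xi^i_r)+\xi^i_r$ and using the already-established $a_0^i$-condition. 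Throughout, the clockwise orientation of $\partial B_\varepsilon^i$ and the fact that the outer unit normal of $\Omega_\varepsilon(\defects)$ points into the hole pin down all signs and deliver exactly \eqref{eq_strong_EL_v_min_BC} with right-hand sides $(\tilde\alpha(\Omega^i))_r$. The converse implication, that \eqref{eq_strong_EL_v_min} implies \eqref{2408291139}, follows by reading the same Green formula backwards. The argument of Proposition~\ref{prop_202409051553}, which relates the Yavari-type and Michell-type boundary integrals through precisely this kind of manipulation, serves as a template for these computations.
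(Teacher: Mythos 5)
Your overall strategy is the paper's: your Green formula is exactly Lemma~\ref{lemma_ibp} after one extra tangential integration by parts (indeed $m_n(\hat v)=(1-\nu)\Delta\hat v-\langle\nabla^2\hat v\,t,t\rangle$ and $\int_{\partial B_\varepsilon^i}m_{nt}(\hat v)\,\partial_t\phi\,\ud\Huno=-\int_{\partial B_\varepsilon^i}\partial_t m_{nt}(\hat v)\,\phi\,\ud\Huno$ on a closed curve), the localization argument for \eqref{eq_strong_EL_v_min_bulk} is the same, and testing boundary-by-boundary with the affine data $a_0^i,a_1^i,a_2^i$ is precisely the paper's use of \eqref{eq_integratedEL_reg_BC} with $\phi=a_0$, $a_1x_1$, $a_2x_2$. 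Two points, however, do not hold up.

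First, the step you yourself call ``the main work'' is not carried out, and the tools you name for it are the wrong ones. No harmonicity of $\Delta\hat v$ and no Cauchy--Riemann trade is needed, and the latter is actually unsafe here: the harmonic conjugate of $\Delta\hat v$ is multivalued on $\Omega_\varepsilon(\defects)$ precisely when $\tilde\theta(\Omega^i)\neq0$, its period around $\partial B_\varepsilon^i$ being (up to the elastic constant) the first integral in \eqref{eq_strong_EL_v_min_BC}. What closes the computation is elementary: undo your tangential integration by parts, use $m_n(\hat v)n_r+m_{nt}(\hat v)t_r=(\nabla^2\hat v\,n)_r-\nu\Delta\hat v\,n_r$ together with $(\nabla^2\hat v\,n)_2-(\nabla^2\hat v\,t)_1=\Delta\hat v\,n_2$ (and its analogue for $r=1$), and then convert $\int_{\partial B_\varepsilon^i}\Delta\hat v\,n_2\,\ud\Huno$ into $\int_{\partial B_\varepsilon^i}x_1\partial_t(\Delta\hat v)\,\ud\Huno$ via the single-valuedness identity of Lemma~\ref{202308142312} (i.e.\ $n_2\,\ud\Huno=-\ud x_1$ and $\int_\gamma f\,\ud x_r=-\int_\gamma x_r\partial_t f\,\ud\Huno$). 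The paper sidesteps all of this because Lemma~\ref{lemma_ibp} keeps the second-order boundary term in the form $\langle\nabla^2\hat v\,t,\Pi(\nabla\phi)\rangle$, which for affine $\phi$ is already $(\nabla^2\hat v\,t)_r$; your passage to Kirchhoff operators creates exactly the detour you then leave unresolved.

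Second, and more seriously, the ``absorption'' of the disclination average is a step that fails. With $\phi=a_2x_2$ the last term of \eqref{eq_integratedEL_reg_BC} equals $s^ia_2\xi_2^i$ (you correctly note the average of an affine function is its value at the center $\xi^i$), and writing $x_2=(x_2-\xi_2^i)+\xi_2^i$ does not remove it: testing with $a_2(x_2-\xi_2^i)$ kills the disclination average, but the term $-\phi\,\partial_n(\Delta\hat v)$ then produces $+\xi_2^i\int_{\partial B_\varepsilon^i}\partial_n(\Delta\hat v)\,\ud\Huno$, which by the already-proved first line equals $\tfrac{E}{1-\nu^2}s^i\xi_2^i$ again; the two contributions do not cancel, and a careful execution of your plan lands on
\begin{equation*}
\frac{1-\nu^2}{E}\int_{\partial B_\varepsilon^i}\Big(x_1\partial_t(\Delta\hat v)-x_2\partial_n(\Delta\hat v)+\frac{(\nabla^2\hat v\,t)_1}{1-\nu}\Big)\,\ud\Huno=b_1^i-s^i\xi_2^i
\end{equation*}
(and $b_2^i+s^i\xi_1^i$ for the third line), not on the stated right-hand sides $(\tilde\alpha(\Omega^i))_r$. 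Be aware that in the paper's own evaluation of \eqref{eq_integratedEL_reg_BC} with $\phi=a_rx_r$ the term $\tfrac{s^i}{2\pi\varepsilon}\int_{\partial B_\varepsilon^i}a_rx_r\,\ud\Huno$ does not appear (the average $\ave_{\partial B_\varepsilon^i}x_r\,\ud\Huno=\xi_r^i$ is in effect taken to vanish, i.e.\ the computation is done as if each $\partial B_\varepsilon^i$ were centred at the origin); since your setup keeps this term, you must either make that defect-centred normalization explicit or accept the modified right-hand sides above. In either case, the claim that the constants $s^i\xi_r^i$ are absorbed ``using the already-established $a_0^i$-condition'' is incorrect as written, so the proposal does not yet prove \eqref{eq_strong_EL_v_min_BC} as stated.
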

\begin{proof}
Equation \eqref{2408291139}, $\hat{v}\in H^4(\Omega_\varepsilon(\defects))$,  
$\phi\in \Cnew(\Omega_{\varepsilon}(\defects))$, and integration by parts (the details of which can be found in Lemma~\ref{lemma_ibp}) imply that 
\begin{equation*}%\label{2408282214}
\begin{aligned}
0=&\, \frac{1-\nu^2}{E} \int_{\Omega_{\varepsilon}} \Delta^2 \hat{v}\,\phi\,\ud x +\frac{1-\nu^2}{E} \sum_{i=1}^N \int_{\partial B_{\varepsilon}^i}  \bigg(\Delta \hat{v}\, \partial_n\phi-\phi\partial_n(\Delta \hat{v})+\frac{\langle \nabla^2\hat{v}\,t,\Pi(\nabla \phi)\rangle}{1-\nu}\bigg)\,\ud\Huno\\ 
&\, +\sum_{j=1}^J\frac{1}{2\pi\ce}\int_{\partial B_{\ce}^j} \langle \nabla \phi, \Pi(b^j)\rangle\,\ud\Huno +\sum_{k=1}^K \frac{s^k}{2\pi\ce}\int_{\partial B_\ce^k} \phi\,\ud\Huno\,.
\end{aligned}
\end{equation*}
We can now apply the fundamental lemma of the calculus of variation to obtain \eqref{eq_strong_EL_v_min_bulk}. 
Moreover, recalling that any $\phi\in\Cnew(\Omega_\varepsilon(\defects))$ is such that $\phi|_{\partial B_\varepsilon^i}=a^i|_{\partial B_\varepsilon^i}$ and 
and $\partial_n\phi=\nabla a^i\cdot n$ on $\partial B_\ce^i$ for some affine functions $a^i$, for every $i=1,\ldots,N$, we can choose specific test functions that are zero on all but one of these boundaries, so that the boundary terms above yield 
\begin{equation}\label{eq_integratedEL_reg_BC}
\begin{split}
0=&\, \frac{1-\nu^2}{E}\int_{\partial B_\varepsilon^i} \bigg(\Delta \hat{v}\,\partial_n\phi-\phi\partial_n(\Delta \hat{v})+\frac{\langle \nabla^2\hat{v}\,t,\Pi(\nabla\phi) \rangle}{1-\nu}\bigg)\,\ud\Huno \\
&\, + \frac{1}{2\pi\ce}\int_{\partial B_{\ce}^i} \langle  \nabla \phi, \Pi(b^i)\rangle\,\ud\Huno +\frac{s^i}{2\pi\ce}\int_{\partial B_\ce^i} \phi\,\ud\Huno\,,
\end{split}
\end{equation}
for all $i=1,\dots,N$, and for all $\phi\in\Cnew(\Omega_\varepsilon(\defects))$. 
By choosing $\phi(x_1,x_2)=a_0$ and plugging it in \eqref{eq_integratedEL_reg_BC}, we obtain
$$0=-\frac{1-\nu^2}{E} \int_{\partial B_\varepsilon^i} a_0\partial_n(\Delta \hat{v})\,\ud\Huno + \frac{s^i}{2\pi\varepsilon}\int_{\partial B_\varepsilon^i} a_0\,\ud\Huno\,, $$ 
which gives the first equation in \eqref{eq_strong_EL_v_min_BC}.
By choosing $\phi(x_1,x_2)=a_2x_2$ and plugging it in \eqref{eq_integratedEL_reg_BC}, we obtain
\[\begin{split}
0=&\, \frac{1-\nu^2}{E} \int_{\partial B_\varepsilon^i} a_2 \bigg(\Delta\hat{v}\,n_2-x_2\partial_n(\Delta\hat{v})+\frac{(\nabla^2\hat{v}\,t)_1}{1-\nu}\bigg)\ud\Huno-\frac{1}{2\pi\varepsilon} \int_{\partial B_\varepsilon^i} a_2b_1^i\,\ud\Huno \\
=&\, a_2\frac{1-\nu^2}{E} \int_{\partial B_\varepsilon^i} \bigg( x_1\partial_t(\Delta\hat{v})- x_2\partial_n(\Delta\hat{v})+ \frac{(\nabla^2\hat{v}\,t)_1}{1-\nu}\bigg) \ud\Huno - a_2b_1^i\,,
\end{split}\]
where we have used that $n_2=-\ud x_1$ (see~\eqref{202308152252}) and~\eqref{202308142315} with $r=1$. 
Rearranging the terms, this gives the second equation in~\eqref{eq_strong_EL_v_min_BC}.
Finally, by choosing $\phi(x_1,x_2)=a_1x_1$ and plugging it in \eqref{eq_integratedEL_reg_BC}, and by using that $n_1=\ud x_2$ (see~\eqref{202308152252}) and~\eqref{202308142315} with $r=2$, we obtain the third equation in~\eqref{eq_strong_EL_v_min_BC}.
The proposition is proved.
\end{proof}

In the following proposition we translate the results of Proposition~\ref{cor_strongEL} in the stress-strain formulation.
\begin{proposition}\label{2408301408}
Let $\Omega$ be as in \eqref{eq_Omega} and let us assume that $\partial \Omega\in \mathcal{C}^\infty$. 
Let $\alpha\in\ED(\Omega)$ and $\theta\in\WD(\Omega)$ satisfy \eqref{eq_disj_supp}, let $v^\ii$ be defined as in \eqref{eq_vp} and let $\defects$ be as in \eqref{eq_defects}.
Let $\hat{v}\in\Cnew(\Omega_\varepsilon(\defects))$ be the unique solution to the minimum problem in \eqref{2502131930}.
Let $\hat{\sigma}=\Airy(\hat{v})=\cof(\nabla^2 \hat{v})$, as in \eqref{eq_Airyoperatorgen}, and let $\hat{\epsilon}\coloneqq\C^{-1}\hat{\sigma}$.
Then the following conditions are satisfied 
\begin{equation}\label{2408291029}
\begin{cases}
\Dive\hat{\sigma} = 0 & \text{in $\Omega_\varepsilon$,}\\
\hat{\sigma}n = 0 & \text{on $\partial\Omega_\varepsilon$} \\[2mm]
\curl\Curl\hat{\epsilon} = 0 & \text{in $\Omega_\varepsilon$,} \\
\displaystyle \int_{\partial B_\varepsilon^i} \sum_{q=1}^2 (\hat\epsilon_{1q,2}-\hat\epsilon_{q2,1})\,\ud x_q = \tilde\theta(\Omega^i) & \text{for $i = 1,\ldots, N$,} \\[4mm]
\displaystyle \int_{\partial B_\varepsilon^i} \sum_{c,q=1}^2 [(\hat\epsilon_{rc}-x_q(\hat\epsilon_{rc,q}-\hat\epsilon_{cq,r})]\,\ud x_c = \big(\tilde\alpha(\Omega^i)\big)_r & \text{for $i = 1,\ldots, N$, and $r=1,2$,}
\end{cases}
\end{equation}
where the boundaries are oriented counter-clockwise, and $\hat\alpha$ and $\hat\theta$ are defined in \eqref{eq_extended_measures}. 
\end{proposition}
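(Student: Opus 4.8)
The plan is to assemble \eqref{2408291029} from results already in place; the argument runs parallel to the proof of Proposition~\ref{prop_properties_of_eps_p}, with the plastic Airy function $v^p$ replaced by the minimizer $\hat v$ and Proposition~\ref{prop_boundary_cond_incomp} replaced by Proposition~\ref{cor_strongEL}. First I would record the regularity needed to invoke the strong Euler--Lagrange equations: by Proposition~\ref{2408291024} together with Remark~\ref{rem_reg}, the minimizer $\hat v$ inherits the regularity of the cell functions $\kappa_r^i$, so that $\hat v\in\mathcal{C}^\infty(\overline\Omega_\varepsilon(\defects))\cap\Cnew(\Omega_\varepsilon(\defects))$ because $\partial\Omega\in\mathcal{C}^\infty$; in particular $\hat v\in H^4(\Omega_\varepsilon(\defects))$, which is exactly the hypothesis of Proposition~\ref{cor_strongEL}. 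It follows at once that $\hat\sigma=\cof(\nabla^2\hat v)$ and $\hat\epsilon=\C^{-1}\hat\sigma$ are smooth up to $\partial\Omega_\varepsilon$.

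The first two lines of \eqref{2408291029} do not use minimality. Since $\hat\sigma=\cof(\nabla^2\hat v)$, the algebraic identity \eqref{eq_Div_sigma} gives $\Dive\hat\sigma=0$ pointwise in $\Omega_\varepsilon$. For the traction-free boundary condition, membership $\hat v\in\Cnew(\Omega_\varepsilon(\defects))$ forces $\nabla^2\hat v\,t=0$ on $\partial\Omega_\varepsilon$ (this is \eqref{eq_hesstang0}; see \cite[Proposition~A.2]{CDLM2024}, applicable because $\partial\Omega$ is smooth), and then the second relation in \eqref{eq_formaltrans}, equivalently \eqref{eq_202409051159}, yields $\hat\sigma n=\Pi(\nabla^2\hat v\,t)=0$ on $\partial\Omega_\varepsilon$.

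For the bulk compatibility I would apply the formal transformation \eqref{eq_formaltrans}: $\curl\Curl\hat\epsilon=\inc(\C^{-1}\Airy(\hat v))=\frac{1-\nu^2}{E}\Delta^2\hat v$, which vanishes in $L^2(\Omega_\varepsilon(\defects))$ by \eqref{eq_strong_EL_v_min_bulk}. The remaining two families of boundary conditions follow by combining the Michell-type identities \eqref{eq_strong_EL_v_min_BC} of Proposition~\ref{cor_strongEL} with Proposition~\ref{prop_202409051553} applied to $v=\hat v$ and $\epsilon=\hat\epsilon=\C^{-1}\Airy(\hat v)$: Proposition~\ref{prop_202409051553} identifies each $\epsilon$-type line integral appearing in \eqref{2408291029} with $-\tfrac{1-\nu^2}{E}$ times the corresponding Michell line integral, while \eqref{eq_strong_EL_v_min_BC} equates $\tfrac{1-\nu^2}{E}$ times that Michell integral with $\tilde\theta(\Omega^i)$ or $(\tilde\alpha(\Omega^i))_r$, so that composing the two gives precisely the stated identities.

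The one step that requires care is the sign and orientation bookkeeping, exactly as in the proof of Proposition~\ref{prop_properties_of_eps_p}: Proposition~\ref{cor_strongEL} is stated with $\partial B_\varepsilon^i$ oriented clockwise, whereas \eqref{2408291029} uses the counter-clockwise orientation, and Proposition~\ref{prop_202409051553} carries the prefactor $-E/(1-\nu^2)$. Reversing the orientation flips the sign of each line integral, and the three sign contributions compose to $(-1)\cdot(-1)\cdot(+1)=+1$, so no spurious sign survives and the right-hand sides come out as $\tilde\theta(\Omega^i)$ and $(\tilde\alpha(\Omega^i))_r$. Apart from this check, the argument is a direct substitution and re-uses, rather than re-derives, the Euler--Lagrange analysis of Propositions~\ref{2408291024} and~\ref{cor_strongEL}.
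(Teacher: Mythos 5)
Your proposal is correct and follows essentially the same route as the paper's proof: regularity of $\hat v$ via Proposition~\ref{2408291024}, $\Dive\hat\sigma=0$ from the Airy structure \eqref{eq_Div_sigma}, $\hat\sigma n=0$ from \eqref{eq_hesstang0} together with \cite[Proposition~A.2]{CDLM2024} and \eqref{eq_202409051159}, the bulk condition from \eqref{eq_strong_EL_v_min_bulk} via \eqref{eq_formaltrans}, and the boundary conditions by composing \eqref{eq_strong_EL_v_min_BC} with Proposition~\ref{prop_202409051553}. Your explicit check that the prefactor $-E/(1-\nu^2)$ and the clockwise/counter-clockwise switch cancel is a welcome refinement of a point the paper leaves implicit, and your bookkeeping reaches the correct conclusion.
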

\begin{proof}
Thanks to the regularity of the boundary $\partial\Omega_\varepsilon(\defects)$, by Proposition~\ref{2408291024}, $\hat{v}\in \mathcal{C}^\infty(\overline{\Omega}_\varepsilon(\defects))$. 
The first condition in \eqref{2408291029} is immediate from the definition of $\hat{\sigma}$ via the Airy operator \eqref{eq_Airyoperator}, whereas the third condition comes from \eqref{eq_strong_EL_v_min_bulk} thanks to the first identity in \eqref{eq_formaltrans}.
The second identity in \eqref{eq_formaltrans}, the smoothness of the boundary $\partial\Omega_\varepsilon$, and \eqref{eq_hesstang0} put us in a position to apply \cite[Proposition~A.2]{CDLM2024} which implies, by \eqref{eq_202409051159}, the second condition in \eqref{2408291029}.
Finally, conditions~\eqref{eq_strong_EL_v_min_BC} and Proposition~\ref{prop_202409051553} yield the last two conditions in~\eqref{2408291029}.
\end{proof}

As a direct consequence of the previous proposition, we are now in a position to clarify the {role of the compatible} %of the elastic 
part of the strain, upon the introduction of an additive %elasto-plastic
{compatible-incompatible} decomposition.
\begin{proposition}\label{prop_regularity}
Let $\Omega$ be as in \eqref{eq_Omega} and let us assume that $\partial \Omega\in \mathcal{C}^\infty$. 
Let $\alpha\in\ED(\Omega)$ and $\theta\in\WD(\Omega)$ satisfy \eqref{eq_disj_supp}, let $v^\ii$ be defined as in \eqref{eq_vp} and let $\defects$ be as in \eqref{eq_defects}.
Let $\hat{v}\in\Cnew(\Omega_\varepsilon(\defects))$ be the unique solution to the minimum problem in \eqref{2502131930}.
Then there exists a unique (up to rigid motions) function $\hat{u}^\cc\in \mathcal{C}^\infty(\overline{\Omega}_\varepsilon(\defects);\R^{2})$ such that $\hat{\epsilon}^\cc=\nabla^\sym \hat{u}^\cc$, where 
\begin{equation}\label{eq_epsilon_from_sigma}
\hat{\epsilon}^\cc\coloneqq \C^{-1}[\Airy(\hat{v})-\sigma^\ii]=\C^{-1}[\cof(\nabla^2 \hat{v})-\sigma^\ii],
\end{equation}
where $\sigma^\ii=\Airy(v^\ii)=\cof(\nabla^2 v^\ii)$, as in \eqref{eq_Airyoperatorgen}.
\end{proposition}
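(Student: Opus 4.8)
The proof rests on the linearity of the map $v\mapsto\C^{-1}\Airy(v)$ together with the non-simply-connected Saint-Venant compatibility theorem, in the form of Yavari's conditions (i)--(ii), i.e.\ \eqref{202502151537}--\eqref{202308101409} with vanishing right-hand sides. The plan is to show that the strain $\hat\epsilon^e$ satisfies exactly those conditions on the perforated domain $\Omega_\varepsilon(\defects)$, which is precisely of the form \eqref{eq_Omega_nsc}, and then to invoke the theorem to produce the displacement $\hat u^e$.

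Set $\hat\epsilon\coloneqq\C^{-1}\Airy(\hat v)=\C^{-1}\cof(\nabla^2\hat v)$ and $\epsilon^p\coloneqq\C^{-1}\sigma^p=\C^{-1}\cof(\nabla^2 v^p)$, so that, by linearity of $\C^{-1}$, one has $\hat\epsilon^e=\hat\epsilon-\epsilon^p$. First I would record the bulk incompatibility: Proposition~\ref{2408301408} gives $\curl\Curl\hat\epsilon=0$ in $\Omega_\varepsilon(\defects)$, and Proposition~\ref{prop_properties_of_eps_p}(ii) gives $\curl\Curl\epsilon^p=0$ in $\Omega_\varepsilon(\defects)$; subtracting, $\inc\hat\epsilon^e=0$ in $\Omega_\varepsilon(\defects)$, which is \eqref{202502151537}. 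Next I would record the boundary circulations on each internal circle $\partial B_\varepsilon^i$: the last two families of identities in \eqref{2408291029} state that the line integrals of $\hat\epsilon$ over $\partial B_\varepsilon^i$ equal $\tilde\theta(\Omega^i)$ and $(\tilde\alpha(\Omega^i))_r$, while the identities \eqref{eq_necessary_incompatible_boundary_epsilon} state that the same line integrals of $\epsilon^p$ equal the very same quantities; since in both statements $\partial B_\varepsilon^i$ is oriented counter-clockwise, subtracting yields that every line integral in \eqref{202308101409} vanishes for $\hat\epsilon^e$, on every $\partial B_\varepsilon^i$. Thus $\hat\epsilon^e$ satisfies the complete set of Yavari compatibility conditions on $\Omega_\varepsilon(\defects)$ with zero data, and \cite[Proposition~2.8]{Yavari2013} produces a single-valued displacement $\hat u^e$ with $\hat\epsilon^e=\nabla_\sym\hat u^e$. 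Uniqueness up to infinitesimal rigid motions follows from the connectedness of $\Omega_\varepsilon(\defects)$: two such displacements differ by a field with vanishing symmetric gradient (cf.\ Proposition~\ref{prop_weakSV}).

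For the smoothness, since $\partial\Omega\in\mathcal{C}^\infty$, Proposition~\ref{2408291024} yields $\hat v\in\mathcal{C}^\infty(\overline{\Omega}_\varepsilon(\defects))$, while the explicit formulas \eqref{eq_vp} and \eqref{eq_v_dD} show that $v^p$ is $\mathcal{C}^\infty$ away from the defect points $\xi^i$, hence $v^p\in\mathcal{C}^\infty(\overline{\Omega}_\varepsilon(\defects))$ because the $\xi^i$ lie in the excised disks; therefore $\hat\epsilon^e=\C^{-1}[\cof(\nabla^2\hat v)-\cof(\nabla^2 v^p)]\in\mathcal{C}^\infty(\overline{\Omega}_\varepsilon(\defects);\R^{2\times2}_{\sym})$. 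Recovering $\hat u^e$ from $\hat\epsilon^e$ by the Ces\`aro--Volterra path-integral formula --- which is well defined on $\Omega_\varepsilon(\defects)$ exactly because the circulations around the holes vanish --- then gives $\hat u^e\in\mathcal{C}^\infty(\overline{\Omega}_\varepsilon(\defects);\R^2)$.

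The step I expect to require the most care is not conceptual but a verification of conventions: one must check that the orientations of the internal boundaries and the signs in the defining line integrals of Propositions~\ref{2408301408} and \ref{prop_properties_of_eps_p} coincide, so that the subtraction $\hat\epsilon-\epsilon^p$ genuinely annihilates the data $\tilde\theta(\Omega^i)$ and $(\tilde\alpha(\Omega^i))_r$ on each $\partial B_\varepsilon^i$. Once this alignment is confirmed, the existence of $\hat u^e$ is an immediate application of the compatibility theorem, and its regularity is inherited from that of $\hat v$ and $v^p$.
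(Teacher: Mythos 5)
Your argument is correct, and it reaches the conclusion by a route that is parallel to, but not identical with, the paper's. The paper performs the subtraction at the level of the Airy potential: it sets $\hat v^e\coloneqq\hat v-v^p$, checks that $\Delta^2\hat v^e=0$ in $\Omega_\varepsilon(\defects)$ and that Michell's boundary integrals vanish on each $\partial B_\varepsilon^i$ (by combining \eqref{eq_strong_EL_v_min_bulk}--\eqref{eq_strong_EL_v_min_BC} with \eqref{eq_necessary_incompatible_bulk}--\eqref{eq_necessary_incompatible_boundary}), and then invokes \cite[Theorem~8.8]{Selvadurai} to lift $\hat\epsilon^e=\C^{-1}\Airy(\hat v^e)$ to a single-valued displacement, with smoothness inherited from $\hat v^e$. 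You instead subtract at the strain level, $\hat\epsilon^e=\hat\epsilon-\epsilon^p$, use the already-derived strain-form statements (Proposition~\ref{2408301408} for $\hat\epsilon$ and Proposition~\ref{prop_properties_of_eps_p} for $\epsilon^p$, both with the same counter-clockwise orientation, so the data $\tilde\theta(\Omega^i)$ and $(\tilde\alpha(\Omega^i))_r$ indeed cancel), and invoke the sufficiency direction of \cite[Proposition~2.8]{Yavari2013}; regularity then comes from the smoothness of $\hat\epsilon^e$ plus the Ces\`aro--Volterra representation. The two routes are equivalent precisely because of Proposition~\ref{prop_202409051553}, which is how the paper converts Airy-level conditions into strain-level ones; what your version buys is that it reuses the strain-form conclusions verbatim and avoids restating the Airy boundary conditions for $\hat v^e$, at the cost of appealing to Yavari's lifting theorem (and a separate path-integral argument for $\mathcal{C}^\infty$ regularity up to $\partial\Omega_\varepsilon$) instead of staying within the Airy framework, where the paper gets existence and regularity of $\hat u^e$ in one stroke from \cite[Theorem~8.8]{Selvadurai}. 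Your closing caution about orientations is exactly the right point to verify, and it is consistent here since both propositions in the paper state the counter-clockwise convention.
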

\begin{proof}
Thanks to the regularity of the boundary $\partial\Omega_\varepsilon(\defects)$, by Proposition~\ref{2408291024}, $\hat{v}\in \mathcal{C}^\infty(\overline{\Omega}_\varepsilon(\defects))$. 
The function
$\hat{v}^\cc \coloneqq \hat{v}-v^\ii\in \mathcal{C}^\infty(\overline{\Omega}_\varepsilon(\defects))$
satisfies the following conditions
\begin{equation*}%\label{eq_v^e_min_bulk}
\frac{1-\nu^2}{E}\Delta^2 \hat{v}^\cc=0\qquad \text{in $\Omega_\varepsilon(\defects)$,}
\end{equation*}
which follows from \eqref{eq_strong_EL_v_min_bulk} and \eqref{eq_necessary_incompatible_bulk}, and, for every $i=1,\ldots,N$,
\begin{equation*}%\label{eq_v^e_min_BC1}
\begin{cases}
\displaystyle \frac{1-\nu^2}{E} \int_{\partial B_\varepsilon^i} \partial_n(\Delta \hat{v}^\cc)\,\ud\Huno=0\,,\\[3mm]
\displaystyle \frac{1-\nu^2}{E} \int_{\partial B_\varepsilon^i} \bigg(x_1\partial_t(\Delta \hat{v}^\cc)-x_2\partial_n(\Delta \hat{v}^\cc)+\frac{(\nabla^2 \hat{v}^\cc\,t)_1}{1-\nu}\bigg)\,\ud\Huno=0\,,\\[3mm]
\displaystyle \frac{1-\nu^2}{E} \int_{\partial B_\varepsilon^i} \bigg(x_1\partial_n(\Delta \hat{v}^\cc)+x_2\partial_t(\Delta \hat{v}^\cc)+\frac{(\nabla^2 \hat{v}^\cc\,t)_2}{1-\nu}\bigg)\,\ud\Huno=0\,,
\end{cases}
\end{equation*}
which follow from \eqref{eq_strong_EL_v_min_BC} and \eqref{eq_necessary_incompatible_boundary}. 
Notice that $\hat{v}^\cc$ is subject to the boundary condition $\nabla^2 \hat{v}^\cc\,t=-\nabla^2 v^\ii\,t$ on $\partial\Omega_\varepsilon(\defects)$.
Let us define $\hat{\sigma}^\cc\coloneqq\Airy(\hat{v}^\cc)=\cof(\nabla^2 \hat{v}^\cc)$, so that $\hat{\epsilon}^\cc\coloneqq \C^{-1}\hat{\sigma}^\cc$ can be represented as in \eqref{eq_epsilon_from_sigma}.
By \cite[Theorem~8.8]{Selvadurai} there exists $\hat{u}^\cc\in \mathcal{C}^\infty(\overline{\Omega}_\varepsilon(\defects);\R^{2})$ (the regularity follows from that of $\hat{v}^\cc$) such that $\hat{\epsilon}^\cc=\nabla^\sym\hat{u}^\cc$, and this concludes the proof. 
\end{proof} 

\subsection{Mechanical equilibrium in the stress-strain formulation}\label{2503091644}
We start by recalling a theorem by T.~W.~Ting on a weak formulation of the Saint-Venant principle (see Proposition~\ref{prop_weakSV}) in three dimensions.
We use this to show the existence of solutions to the mechanical equilibrium problem in the stress-strain formulation.
We do so by adapting Ting's Theorem to our planar setting.
\begin{theorem}[{Ting \cite{Ting1974}; see \cite[Theorem~3]{GK2005} for this formulation}]\label{thm_Ting}
Let $Y\subset\mathbb{R}^3$ be a bounded, connected, open set with Lipschitz-continuous boundary $\partial Y$.
Let $E\in L^2(Y;\mathbb{R}^{3\times3}_{\sym})$ be such that 
\begin{equation}\label{eq_orthogonalityE}
\int_{Y} E(x):S(x)\,\ud x=0\qquad\text{for every $S\in \Sigma_{\ad}$\,,}
\end{equation}
where $\Sigma_{\ad}$ is the closure in $L^2(Y;\mathbb{R}^{3\times3}_{\sym})$ of the linear space
\begin{equation}\label{eq_V}
\mathcal{V}\coloneqq\big\{ S\in\mathcal{D}(Y;\R^{3\times3}_{\sym}): \Dive S=0\text{ in }Y\big\}.
\end{equation}
Then there exists a vector field $U\in H^1(Y;\mathbb{R}^3)$ that satisfies $E=\nabla^{\sym} U$.
\end{theorem}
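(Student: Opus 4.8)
The plan is to read the statement as a Hilbert-space orthogonality assertion and to prove it via the closed-range theorem applied to the symmetrized-gradient operator. Set $\nabla^{\sym}\colon H^1(Y;\R^3)\to L^2(Y;\R^{3\times3}_{\sym})$ and let $\mathcal R$ denote its range. First I would show that $\mathcal R$ is closed: by the second Korn inequality on bounded Lipschitz domains, $\|v\|_{H^1(Y)}\le C(\|v\|_{L^2(Y)}+\|\nabla^{\sym}v\|_{L^2(Y)})$, and combining this with the compactness of $H^1(Y)\hookrightarrow L^2(Y)$ and the finite-dimensionality of $\ker\nabla^{\sym}$ (the rigid motions), a Peetre-type argument yields closedness. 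Then $L^2(Y;\R^{3\times3}_{\sym})=\mathcal R\oplus\mathcal R^{\perp}$, and the theorem reduces to the identity $\mathcal R^{\perp}=\Sigma_{\ad}$: granting it, hypothesis \eqref{eq_orthogonalityE} reads $E\in\Sigma_{\ad}^{\perp}=(\mathcal R^{\perp})^{\perp}=\mathcal R$, i.e.\ $E=\nabla^{\sym}U$ for some $U\in H^1(Y;\R^3)$.

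Next I would identify $\mathcal R^{\perp}$ explicitly. For $T\in L^2(Y;\R^{3\times3}_{\sym})$ and $v\in H^1(Y;\R^3)$, symmetry of $T$ and integration by parts give $\int_Y T:\nabla^{\sym}v\,\ud x=\int_Y T:\nabla v\,\ud x=-\int_Y(\Dive T)\cdot v\,\ud x+\langle Tn,v\rangle_{\partial Y}$. Testing against $v\in\mathcal D(Y;\R^3)$ forces $\Dive T=0$ in $\mathcal D'(Y)$; this makes $Tn$ a well-defined element of $H^{-1/2}(\partial Y;\R^3)$, and testing against arbitrary $v\in H^1(Y;\R^3)$ then forces $Tn=0$. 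Hence $\mathcal R^{\perp}=\mathcal N\coloneqq\{T\in L^2(Y;\R^{3\times3}_{\sym}):\Dive T=0\text{ in }Y,\ Tn=0\text{ on }\partial Y\}$. The inclusion $\Sigma_{\ad}\subseteq\mathcal N$ is immediate, since every member of $\mathcal V$ is smooth, compactly supported, symmetric and divergence-free, and $\mathcal N$ is $L^2$-closed.

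The hard part will be the reverse inclusion $\mathcal N\subseteq\Sigma_{\ad}$, i.e.\ the density in $\mathcal N$ of smooth, compactly supported, divergence-free symmetric tensor fields. My approach would be: given $T\in\mathcal N$, mollify $T$ and multiply by a cutoff vanishing in a shrinking collar of $\partial Y$, obtaining $T_\delta\in\mathcal D(Y;\R^{3\times3}_{\sym})$ with $T_\delta\to T$ in $L^2$ but with $g_\delta\coloneqq\Dive T_\delta$ a small error supported near $\partial Y$; the condition $Tn=0$ is exactly what secures the solvability condition needed to write $g_\delta$ as the symmetric-tensor divergence of a field $R_\delta$ supported in the collar and small in $L^2$, via a Bogovskii-type right inverse of $\Dive$ on a finite cover of the collar by star-shaped Lipschitz pieces; then $S_\delta\coloneqq T_\delta-R_\delta\in\mathcal V$ and $S_\delta\to T$. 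A second route, closer to Ting's, avoids proving the density directly: solve $\int_Y\nabla^{\sym}U:\nabla^{\sym}v\,\ud x=\int_Y E:\nabla^{\sym}v\,\ud x$ for all $v\in H^1$ (Lax--Milgram together with Korn), put $z\coloneqq E-\nabla^{\sym}U\in\mathcal R^{\perp}=\mathcal N$, and observe that $z\perp\Sigma_{\ad}$ as well; testing $z\perp\Sigma_{\ad}$ against $\inc W$ with $W\in\mathcal D(Y;\R^{3\times3}_{\sym})$ (these lie in $\mathcal V$, being smooth, compactly supported, symmetric and divergence-free) gives $\inc z=0$ in $Y$, whence a possibly multivalued displacement via the distributional Ces\`aro--Volterra path integral; orthogonality of $z$ to the topologically nontrivial divergence-free tensors kills all of its periods, so the displacement is single-valued and $z\in\mathcal R$, and then $z\in\mathcal R\cap\mathcal R^{\perp}=\{0\}$, i.e.\ $E=\nabla^{\sym}U$. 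In either route the genuinely delicate points are the low regularity ($T$ merely $L^2$, $\partial Y$ merely Lipschitz) — forcing the use of $H^{-1/2}$ normal traces and a weak Ces\`aro--Volterra formula — and, for the first route, the construction of the tensorial Bogovskii operator with prescribed collar support; these are the technical core of \cite{Ting1974}, with the formulation used here recorded in \cite{GK2005}.
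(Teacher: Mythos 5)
You should first note that the paper itself does not prove this statement: it is imported verbatim from Ting \cite{Ting1974}, in the formulation of \cite[Theorem~3]{GK2005}, and is used as a black box in the proof of Theorem~\ref{thm_liftabilityE}. So there is no in-paper argument to compare against; your proposal can only be judged on its own. Its functional-analytic skeleton is correct and is the standard dual reading of the result: $\mathcal{R}\coloneqq\nabla^{\sym}H^1(Y;\R^3)$ is closed in $L^2(Y;\R^{3\times3}_{\sym})$ (second Korn inequality, compact embedding, finite-dimensional kernel of rigid motions), $\mathcal{R}^{\perp}=\mathcal{N}\coloneqq\{T:\Dive T=0 \text{ in } Y,\ Tn=0 \text{ in } H^{-1/2}(\partial Y)\}$, $\Sigma_{\ad}\subseteq\mathcal{N}$, and the theorem is then exactly equivalent to the nontrivial inclusion $\mathcal{N}\subseteq\Sigma_{\ad}$. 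Up to that point everything you write is sound.

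The genuine gap is that neither of your two routes actually establishes that inclusion, which is the entire content of Ting's theorem. In Route 1, the corrector $R_\delta$ must be a \emph{symmetric} tensor field: applying a Bogovskii right inverse row-wise to $\Dive R_\delta=g_\delta$ destroys symmetry, and solving with a symmetric, collar-supported $R_\delta$ carries compatibility conditions beyond zero mean --- if $R$ is symmetric and compactly supported, then necessarily $\int g\,\ud x=0$ \emph{and} $\int (x_ig_j-x_jg_i)\,\ud x=0$ --- which hold only globally (and only approximately after mollification), not on the individual star-shaped pieces of your cover; distributing resultant and moment compatibly over the cover while preserving symmetry and smallness is precisely where the real work lies, and ``Bogovskii-type right inverse'' does not supply it. In Route 2, passing from $z\perp\{\inc W: W\in\mathcal{D}(Y;\R^{3\times3}_{\sym})\}$ to $\inc z=0$ is fine, but the remaining steps are the theorem itself on a non-simply-connected Lipschitz domain: you need a weak Ces\`aro--Volterra/Saint-Venant statement for merely $L^2$ strains, and you need to know that inside $\mathcal{V}$ there exist divergence-free, compactly supported, symmetric fields whose pairings with $z$ compute the finitely many periods, so that orthogonality ``kills'' them --- a duality statement for the $\inc$ complex that you assert but do not prove, and indeed explicitly defer to \cite{Ting1974}. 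As written, the proposal reduces the statement to its own hard core and then cites the literature for that core; that is a correct reduction, but not a proof.
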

We remark that the domain $Y$ need not be simply connected for the result to hold true. 
For sets $Y\subset\mathbb{R}^3$ as in the hypothesis of Theorem~\ref{thm_Ting} and $\Omega\subset\R^{2}$ bounded, connected, and open with Lipschitz boundary, we define the spaces
\[\begin{split}
K(Y)\coloneqq &\, \{E\in L^2(Y;\mathbb{R}^{3\times3}_{\sym}):\text{\eqref{eq_orthogonalityE} holds true}\},\\
K'(\Omega)\coloneqq &\, \{e\in L^2(\Omega;\R^{2\times2}_{\sym}):\underline{e}\in K(\Omega\times(0,1))\},
\end{split}\]
and notice that (i) the constraint \eqref{eq_orthogonalityE} is weakly closed in $L^2(Y;\R^{3\times 3}_{\sym})$, so that $K(Y)$ is a weakly closed subspace of $L^2(Y;\R^{3\times 3}_{\sym})$; (ii) $K'(\Omega)$ is a linear subspace of $K(\Omega\times(0,1))$, so that $K'(\Omega)$ is a weakly closed subspace of $L^2(\Omega\times(0,1);\R^{3\times 3}_{\sym})$.

\begin{theorem}\label{thm_liftabilityE}
Let $\Omega$ be as in \eqref{eq_Omega} and let us assume that $\partial \Omega\in\mathcal{C}^\infty$.
Let $\alpha\in\ED(\Omega)$ and $\theta\in\WD(\Omega)$ satisfy \eqref{eq_disj_supp}, let $v^\ii$ be defined as in \eqref{eq_vp} and let $\defects$ be as in \eqref{eq_defects}.
Let $\sigma^\ii\coloneqq \Airy(v^\ii)=\cof(\nabla^2 v^\ii)$, as in \eqref{eq_Airyoperatorgen}, and let $\epsilon^\ii\coloneqq \C^{-1}\sigma^\ii$.
For every $\epsilon\in L^2(\Omega_\epsilon(\defects);\mathbb{R}^{2\times2}_{\sym})$, let us define the energy functional
\begin{equation}\label{eq_E}
\mathcal{E}(\epsilon)\coloneqq \frac12\int_{\Omega_\varepsilon(\defects)} (\C\epsilon(x)+\sigma^\ii(x)):(\epsilon(x)+\epsilon^\ii(x))\,\ud x.
\end{equation}
Then there exists a unique solution $\epsilon^\cc$ to the minimum problem
\begin{equation}\label{eq_min_3d}
\min\{\mathcal{E}(\epsilon): \epsilon\in K'(\Omega_\varepsilon)\},
\end{equation}
which satisfies the following necessary conditions 
\begin{equation}\label{cauchy_ee}
\begin{cases}
\Dive\C\epsilon^\cc=0&\text{in $\Omega_{\varepsilon}$\,,}\\
\C\epsilon^\cc\,n=-\C\epsilon^\ii\,n&\textrm{on $\partial\Omega_{\varepsilon}$\,,}\\[2mm]
\curl\Curl \epsilon^\cc=0&\text{in $\Omega_{\varepsilon}$\,,}\\
\displaystyle \int_{\partial B_\varepsilon^i} \big[(\epsilon^\cc)_{rq,c}-(\epsilon^\cc)_{qc,r}\big]\,\ud x_q = 0  & \text{for $i = 1,\ldots, N$} \\[3mm]
\displaystyle \int_{\partial B_\varepsilon^i} \big[(\epsilon^\cc)_{rc}-x_q\big((\epsilon^\cc)_{rc,q}-(\epsilon^\cc)_{cq,r}\big)\big]\,\ud x_c = 0 & \text{for $i = 1,\ldots, N$ and $r=1,2$,}
\end{cases}
\end{equation}
where the boundaries are oriented counter-clockwise.
Moreover, $\underline{\epsilon}^\cc$ is the symmetrized gradient of a field $u^\cc\in H^1(\Omega_\varepsilon;\R^{3})$ of the form
\begin{equation}\label{eq_explicit_ue}
u^\cc(x_1,x_2,x_3)=
\begin{pmatrix}
u_1^\cc(x_1,x_2)-c_{13}x_3\\
u_2^\cc(x_1,x_2)-c_{23}x_3\\
c_{13}x_1+c_{23}x_2
\end{pmatrix};
\end{equation}
Finally, the field $\epsilon\coloneqq\epsilon^\cc+\epsilon^\ii$ satisfies the following conditions 
\begin{equation}\label{eq_sistema6.7}
\begin{cases}
\Dive\C\epsilon = 0 & \text{in $\Omega_{\varepsilon}$\,,}\\			
\C\epsilon n = 0 & \text{on $\partial\Omega_{\varepsilon}$\,,} \\[2mm]
\curl\Curl\epsilon = 0 & \text{in $\Omega_{\varepsilon}$\,,} \\\displaystyle \int_{\partial B_\varepsilon^i} (\epsilon_{rq,c}-\epsilon_{qc,r})\,\ud x_q = \tilde\theta(B_\varepsilon^i) & \text{for $i = 1,\ldots, N$,}\\[3mm]
\displaystyle \int_{\partial B_\varepsilon^i} [\epsilon_{rc}-x_q(\epsilon_{rc,q}-\epsilon_{cq,r})]\,\ud x_c = (\tilde\alpha(B_\varepsilon^i))_r & \text{for $i = 1,\ldots, N$ and $r=1,2$,} 
\end{cases}
\end{equation}
where the boundaries are oriented counter-clockwise,
where $\tilde\alpha\in\ED(\Omega)$ and $\tilde\theta\in\WD(\Omega)$ are defined in \eqref{eq_extended_measures}.
\end{theorem}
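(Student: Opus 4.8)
The plan is to combine the direct method of the calculus of variations with T.~W.~Ting's reduction to three dimensions (Theorem~\ref{thm_Ting}) and Yavari's characterization of strains deriving from a single-valued displacement. Since $\sigma^p=\C\epsilon^p$, the integrand in \eqref{eq_E} equals $\C(\epsilon+\epsilon^p):(\epsilon+\epsilon^p)$, so that $\mathcal{E}(\epsilon)=\frac12\int_{\Omega_\varepsilon}\C(\epsilon+\epsilon^p):(\epsilon+\epsilon^p)\,\ud x$ is strictly convex and, $\C$ being positive definite, coercive on $L^2(\Omega_\varepsilon;\R^{2\times2}_{\sym})$. The set $K'(\Omega_\varepsilon)$ is a weakly closed linear subspace (an intersection of kernels of bounded linear functionals), so the direct method gives a unique minimizer $\epsilon^e$, characterized by the Euler--Lagrange identity $\int_{\Omega_\varepsilon}(\C\epsilon^e+\sigma^p):\eta\,\ud x=0$ for every $\eta\in K'(\Omega_\varepsilon)$.

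The next step is to turn this identity into the first two lines of \eqref{cauchy_ee}. I would first prove the auxiliary fact that $\nabla^{\sym}\phi\in K'(\Omega_\varepsilon)$ for every $\phi\in H^1(\Omega_\varepsilon;\R^2)$: the $x_3$-independent extension of $\nabla^{\sym}\phi$ is the three-dimensional symmetrized gradient of $\underline{\phi}\in H^1(\Omega_\varepsilon\times(0,1);\R^3)$, and pairing it against any $S\in\mathcal{V}$ and integrating by parts yields $0$ (no boundary term, $S$ being compactly supported), a vanishing inherited by all of $\Sigma_{\ad}=\overline{\mathcal{V}}$ by $L^2$-continuity. Testing the Euler--Lagrange identity against $\phi\in\mathcal{D}(\Omega_\varepsilon;\R^2)$ then gives $\Dive(\C\epsilon^e+\sigma^p)=0$ in $\Omega_\varepsilon$; since $\Dive\sigma^p=\Dive\cof(\nabla^2 v^p)=0$ by \eqref{eq_Div_sigma}, this is $\Dive\C\epsilon^e=0$. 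Hence $\C\epsilon^e+\sigma^p$ belongs to $H(\Dive;\Omega_\varepsilon)$ and has a normal trace in $H^{-1/2}(\partial\Omega_\varepsilon)$; testing the identity against arbitrary $\phi\in H^1(\Omega_\varepsilon;\R^2)$ via Green's formula then forces $(\C\epsilon^e+\sigma^p)n=0$ on $\partial\Omega_\varepsilon$, that is $\C\epsilon^e n=-\C\epsilon^p n$.

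For the remaining three lines of \eqref{cauchy_ee} and for the lifting, I would apply Theorem~\ref{thm_Ting} to $Y=\Omega_\varepsilon\times(0,1)$ and $E=\underline{\epsilon}^e\in K(Y)$, obtaining $U^e\in H^1(Y;\R^3)$ with $\underline{\epsilon}^e=\nabla^{\sym}U^e$. Because $\underline{\epsilon}^e$ is independent of $x_3$ and has vanishing $13$-, $23$- and $33$-entries, integrating the corresponding first-order relations in $x_3$ forces $U^e_3$ to be affine in $(x_1,x_2)$ and $U^e$ to have precisely the structure \eqref{eq_explicit_ue}; in particular $\epsilon^e=\nabla^{\sym}u^e$ for a single-valued $u^e\in H^1(\Omega_\varepsilon;\R^2)$, unique up to rigid motions. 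Then $u^e$ weakly solves $\Dive\C\nabla^{\sym}u^e=0$ in $\Omega_\varepsilon$ with traction $\C\nabla^{\sym}u^e\,n=-\sigma^p n$ on $\partial\Omega_\varepsilon$; as $\sigma^p$ is smooth on $\overline{\Omega}_\varepsilon$ and the boundary (consisting of $\partial\Omega$ and of the circles $\partial B_\varepsilon^i$) is smooth, elliptic regularity for the Lam\'e system upgrades $u^e$, and thus $\epsilon^e$, to $\mathcal{C}^\infty(\overline{\Omega}_\varepsilon)$. Carrying a smooth single-valued displacement on the non-simply connected $\Omega_\varepsilon$, the strain $\epsilon^e$ satisfies the Saint-Venant condition $\curl\Curl\epsilon^e=0$ together with Yavari's boundary-integral conditions \eqref{202308101409} with vanishing right-hand sides, by \cite[Proposition~2.8]{Yavari2013}; these are the last three lines of \eqref{cauchy_ee}.

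Finally, \eqref{eq_sistema6.7} follows by superposition from \eqref{cauchy_ee} and the properties of $\epsilon^p$ in Proposition~\ref{prop_properties_of_eps_p} (recalling $\Omega^i=B_\varepsilon^i$): $\Dive\C(\epsilon^e+\epsilon^p)=\Dive\C\epsilon^e+\Dive\sigma^p=0$ in $\Omega_\varepsilon$; $\C(\epsilon^e+\epsilon^p)n=-\sigma^p n+\sigma^p n=0$ on $\partial\Omega_\varepsilon$; $\curl\Curl(\epsilon^e+\epsilon^p)=0$ by \eqref{eq_necessary_incompatible_bulk_epsilon}; and the two families of line integrals add to $0+\tilde\theta(\Omega^i)=\tilde\theta(B_\varepsilon^i)$ and $0+(\tilde\alpha(\Omega^i))_r=(\tilde\alpha(B_\varepsilon^i))_r$ by \eqref{eq_necessary_incompatible_boundary_epsilon}. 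I expect the main obstacle to be the third step: transporting the abstract three-dimensional Ting lifting back to a genuine planar, single-valued displacement and, above all, recognizing the necessary conditions it produces as exactly Yavari's compatibility conditions — all while keeping careful track of the Poisson component $\sigma_{33}$ and of the interplay between $2\times2$ and $3\times3$ contractions that runs through the computations. The verification in the second step that symmetrized gradients with nonzero boundary trace belong to $K'(\Omega_\varepsilon)$ is the other point requiring care.
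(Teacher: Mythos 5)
Your proposal is correct, and its skeleton coincides with the paper's: direct method for existence/uniqueness, Ting's Theorem~\ref{thm_Ting} on the cylinder $\Omega_\varepsilon\times(0,1)$ to lift $\underline{\epsilon}^e$ and obtain the structure \eqref{eq_explicit_ue} (your $x_3$-integration argument is exactly Lemma~\ref{lem_special_form}), Yavari's compatibility conditions for the single-valued $u^e$ to get the last three lines of \eqref{cauchy_ee}, and superposition with $\epsilon^p$ via Proposition~\ref{prop_properties_of_eps_p} for \eqref{eq_sistema6.7}. The genuine difference is in how the first two lines of \eqref{cauchy_ee} are produced. The paper sets up the auxiliary three-dimensional problem \eqref{eq_full3d} over $K(\Omega_\varepsilon\times(0,1))$, represents its admissible variations as $\nabla^{\sym}V$ through Ting's theorem, derives the Euler--Lagrange system \eqref{eq_EL_E_Y} (including the conditions on $\Omega_\varepsilon\times\{0\}$ and $\Omega_\varepsilon\times\{1\}$ involving $\sigma^p_{33}$), and then identifies the planar minimizer with the cylindrical one through $\underline{\mathcal{E}}(\underline{e};\Omega_\varepsilon\times(0,1))=\mathcal{E}(e)$ and the comparison of minima \eqref{eq_min_prob_ineq}. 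You instead remain in the plane: you observe that $\nabla^{\sym}\phi\in K'(\Omega_\varepsilon)$ for every $\phi\in H^1(\Omega_\varepsilon;\R^2)$ (checked by pairing the trivial extension against compactly supported divergence-free tensors and passing to the $L^2$-closure), and then extract $\Dive\C\epsilon^e=0$ distributionally and the traction condition $\C\epsilon^e n=-\C\epsilon^p n$ from the first variation via the $H(\Dive;\Omega_\varepsilon)$ normal trace and the surjectivity of the trace onto $H^{1/2}(\partial\Omega_\varepsilon)$. Your route is leaner (no auxiliary 3D Euler--Lagrange system, no equality-of-minima step) and you make explicit the elliptic regularity for the traction problem that is needed to read the boundary integrals in \eqref{cauchy_ee} classically, a point the paper treats more tersely through the smoothness of $\partial\Omega_\varepsilon$; the paper's detour, on the other hand, yields as a by-product that the planar and cylindrical minimization problems have the same value and the same minimizer (up to trivial extension), and it channels all uses of Ting's theorem through a single three-dimensional problem, whereas you invoke Ting only for the lifting, which is equally legitimate since $\underline{\epsilon}^e\in K(\Omega_\varepsilon\times(0,1))$ by the very definition of $K'(\Omega_\varepsilon)$.
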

\begin{proof}
The proof will rely on three ingredients: (i) a suitable extension to the three-dimensio\-nal setting by considering the cylindrical domain $Y=\Omega_\varepsilon\times(0,1)\subset\R^{3}$, in which we will be able to apply Ting's Theorem~\ref{thm_Ting}, (ii) the structure of the energy, which will be the same for fields $\epsilon\in L^2(\Omega_\varepsilon;\R^{2\times2}_{\sym})$ and for their extensions $\underline{\epsilon}\in L^2(\Omega_\varepsilon;\R^{3\times3}_{\sym})$, and (iii) the properties of $\epsilon^\ii$ (see Proposition~\ref{prop_properties_of_eps_p}).
We break the proof down into four steps.

\noindent \emph{Step 1 (solution to a minimum problem in $K(\Omega_\varepsilon\times(0,1))$).} Let us consider the energy functional
\begin{equation}\label{eq_energy3dTing}
\underline{\mathcal{E}}(E;\Omega_\varepsilon\times(0,1))\coloneqq 
\frac12\int_{\Omega_\varepsilon\times(0,1)} (\C E(x)+\underline{\sigma}^\ii(x)):(E(x)+\underline{\epsilon}^\ii(x))\,\ud x\,;
\end{equation}
by applying the direct method   of the calculus of variations, the minimum problem
\begin{equation}\label{eq_full3d}
\min\{\underline{\E}(E;\Omega_\varepsilon\times(0,1)):E\in K(\Omega_\varepsilon\times(0,1))\}
\end{equation}
has a unique minimizer $\widehat{E}\in K(\Omega_\varepsilon\times(0,1))$.
For every $H\in K(\Omega_\varepsilon\times(0,1))$, we compute
\begin{equation}\label{eq_ELH}
\begin{split}
0=&\, \frac{\ud}{\ud t}\bigg|_{t=0} \underline{\E}(\widehat{E}+tH;\Omega_\varepsilon\times(0,1)) 
= \int_{\Omega_\varepsilon\times(0,1)} \big(\C\widehat{E}+\underline{\sigma}^\ii\big):H\,\ud x \\
=& \int_{\Omega_\varepsilon\times(0,1)} \big(\C\widehat{E}+\underline{\sigma}^\ii\big):\nabla V\,\ud x \\
=&\, \int_{\partial(\Omega_\varepsilon\times(0,1))} \big\langle\big(\C\widehat{E}+\underline{\sigma}^\ii\big) n, V\big\rangle\,\ud\mathcal{H}^2
%\\&\, 
- \int_{\Omega_\varepsilon\times(0,1)} \big\langle\Dive\big(\C\widehat{E}\big), V\big\rangle\,\ud x\,,
\end{split}
\end{equation}
where we have applied Ting's Theorem~\ref{thm_Ting} with $Y=\Omega_\varepsilon\times(0,1)$ to $H$ (obtaining $V\in H^1(\Omega_\varepsilon\times(0,1);\R^{3})$ such that $H=\nabla^{\sym}V$) and used the symmetry of $\C\widehat{E}+\underline{\sigma}^\ii$ to substitute $\nabla^{\sym}V$ with $\nabla V$; to obtain the last line, we have integrated by parts and used that $\Dive\underline{\sigma}^\ii=0$ (by construction of $\sigma^\ii$, see \eqref{eq_Airyoperatorgen}, and by the fact that it is independent of the $x_3$-variable).
This yields the Euler--Lagrange equations
\begin{equation}\label{eq_EL_E_Y}
\begin{cases}
\Dive(\C \widehat{E})=0 & \text{in $\Omega_\varepsilon\times(0,1)$,} \\
\C \widehat{E}\,n=-\C \underline{\epsilon}^\ii\,n & \text{on $(\partial\Omega_\varepsilon)\times(0,1)$,} \\
\C \widehat{E}\,n=\sigma_{33}^\ii e_3 & \text{on $\Omega_\varepsilon\times\{0\}$,}\\
\C \widehat{E}\,n=-\sigma_{33}^\ii e_3 & \text{on $\Omega_\varepsilon\times\{1\}$,}
\end{cases}
\end{equation}
which characterize the unique minimizer $\widehat{E}\in K(\Omega_\varepsilon\times(0,1))$ to \eqref{eq_full3d}.
Notice that in the second equation the outer unit normal $n$ to $(\partial\Omega_\varepsilon)\times(0,1)$ has vanishing third component, so that only the upper left $2\times2$ block $\hat{e}$ of $\widehat{E}$ and $\epsilon^\ii$ contribute to the boundary condition, which can be rewritten as
$$\C \hat{e}(n_1,n_2)^\top=-\C\epsilon^\ii(n_1,n_2)^\top\qquad\text{on $\partial\Omega_\varepsilon$.}$$
Moreover, by Ting's Theorem~\ref{thm_Ting}, there exists a vector field $\widehat{U}\in H^1(\Omega_\varepsilon\times(0,1);\R^{3})$ such that $\widehat{E}=\nabla^{\sym}\widehat{U}$; this implies that 
$$\CURL\CURL\widehat{E}=0\qquad\text{in $H^{-2}(\Omega_\varepsilon\times(0,1);\R^{3\times3}_{\sym})$.}$$

\noindent\emph{Step 2 (solution to the minimum problem \eqref{eq_min_3d}).}
Given $\epsilon\in L^2(\Omega_\varepsilon(D);\R^{2\times2}_{\sym})$,
%and $Y=\Omega_\varepsilon(\defects)\times(0,1)$ (which satisfies the hypotheses of Theorem~\ref{thm_Ting}), 
by applying the direct method of the calculus of variations and by the weak closedness of $K'(\Omega_\varepsilon)$ in $L^2(\Omega_\varepsilon\times(0,1))$, we obtain the existence and uniqueness of a minimizer $\hat{\epsilon}\in K'(\Omega_\varepsilon)$ for the minimum problem \eqref{eq_min_3d}. 
In order to find the necessary conditions for minimality, we first notice that an immediate computation yields the equality, for every $e\in K'(\Omega_\varepsilon)$,
\[\begin{split}
\underline{\E}(\underline{e};\Omega_\varepsilon\times(0,1))=&\, \frac12\int_{\Omega_\varepsilon\times(0,1)} (\C\underline{e}(x_1,x_2)+\underline{\sigma}^\ii(x_1,x_2)):(\underline{e}(x_1,x_2)+\underline{\epsilon}^\ii(x_1,x_2))\,\ud x_1\ud x_2\ud x_3 \\
=&\, \frac12\int_{\Omega_\varepsilon} (\C e(x_1,x_2)+\sigma^\ii(x_1,x_2)):(e(x_1,x_2)+\epsilon^\ii(x_1,x_2))\,\ud x_1\ud x_2 = \E(e),
\end{split}\]
where $\E$ is the energy defined in \eqref{eq_E},
from which we obtain that 
\begin{equation}\label{eq_min_prob_ineq}
\begin{split}
\min\{\E(\epsilon):\epsilon\in K'(\Omega_\varepsilon)\} = &\, \min\{\underline{\E}(\underline{\epsilon};\Omega_\varepsilon\times(0,1)):\epsilon\in K'(\Omega_\varepsilon)\}, \\ %\label{eq_min_prob_same} \\
\geq &\, \min\{\underline{\E}(E;\Omega_\varepsilon\times(0,1)):E\in K(\Omega_\varepsilon\times(0,1))\}; 
\end{split}
\end{equation}
inequality \eqref{eq_min_prob_ineq} is a consequence of the monotonicity of the minimum with respect to the size of competitors.
Then, for every $h\in K'(\Omega_\varepsilon)$, we can compute
\[\begin{split}
0=&\, \frac{\ud}{\ud t}\bigg|_{t=0} \E(\hat{\epsilon}+th)= \frac{\ud}{\ud t}\bigg|_{t=0} \underline{\E}(\hat{\underline{\epsilon}}+t\underline{h};\Omega_\varepsilon\times(0,1)),
\end{split}\]
which, since $\underline{h}\in K(\Omega_\varepsilon\times(0,1))$ is an admissible variation $H$ for \eqref{eq_ELH}, yields that $\hat{\underline{\epsilon}}$ satisfies the Euler--Lagrange equations \eqref{eq_EL_E_Y}.
Therefore, we have proved that the unique minimizer $\hat{\epsilon}$ of the minimum problem \eqref{eq_min_3d} is such that $\hat{\underline{\epsilon}}$ is the unique minimizer \eqref{eq_full3d}, and this makes the inequality in \eqref{eq_min_prob_ineq} an equality.

From now on, we call $\epsilon^\cc$ this minimizer. 
By Ting's Theorem~\ref{thm_Ting}, there exists a vector field $u^\cc\in H^1(\Omega_\varepsilon\times(0,1);\R^{3})$
 %(we prove the structure \eqref{eq_explicit_ue} in Lemma~\ref{lem_special_form})
  such that $\underline{\epsilon}^\cc=\nabla^{\sym} u^\cc$ in $\Omega_\varepsilon\times(0,1)$; it follows that 
$\INC\underline{\epsilon}^\cc=\CURL\CURL\underline{\epsilon}^\cc=0$ in $H^{-2}(\Omega_\varepsilon;\R^{3\times3}_{\sym})$,
which amounts to 
\begin{equation}\label{eq_curlCurl_epsilon^e}
\inc\epsilon^\cc=\curl\Curl\epsilon^\cc=0\qquad\text{in $H^{-2}(\Omega_\varepsilon)$.}
\end{equation}
It is a matter of straightforward calculations to verify that such $u^\cc$ has the structure as in \eqref{eq_explicit_ue}.

\noindent\emph{Step 3 (necessary conditions of the minimality of $\epsilon^\cc$).} 
The first two lines in \eqref{eq_EL_E_Y} and \eqref{eq_curlCurl_epsilon^e} are the first three lines of \eqref{cauchy_ee}.
The existence of $u^\cc$ obtained from Ting's Theorem~\ref{thm_Ting} means that the strain $\epsilon^\cc$ is compatible, so that, by the smoothness of $\partial\Omega$ and by \cite[Proposition 2.8]{Yavari2013} also the conditions on the boundary integrals in \eqref{cauchy_ee} hold true.

\noindent\emph{Step 4 (necessary conditions for $\epsilon=\epsilon^\cc+\epsilon^\ii$).}
The linearity of the differential operators involved and the properties of $\epsilon^\ii$ from Proposition~\ref{prop_properties_of_eps_p} give \eqref{eq_sistema6.7}; the proof is complete.
\end{proof} 

\section{Conclusions}
{We have presented a general variational framework for characterizing the equilibrium of mechanical systems in the presence of kinematic incompatibility constraints, under the assumptions of linearized kinematics in the plane strain regime.
For non-simply connected domains, our theory models the solutions that arise from violations of kinematic compatibility conditions along internal boundaries.
Our study is motivated by the core-radius approach to the study of Volterra wedge disclinations and edge dislocations, where small disks of fixed radius $\varepsilon>0$ around the defects are removed, therefore creating a non-simply connected domain.
%as regularized Volterra-type wedge disclinations and edge dislocations, following the core radius modeling approach.
%As an initial study, we focused on analyzing regularized models with a fixed core radius. 
The parameter $\varepsilon$ characterizing the regularized solutions predicted by this approach 
%are characterized by a fixed length scale $\varepsilon>0$, which 
represents the minimal scale above which the continuum hypothesis remains valid. 
This length scale, typically of the order of nanometers, can be estimated from experimental observations.
%In addition, we characterized the mechanical equilibrium using cell formulas.

We showed that the infinite-dimensional minimization problem can be reduced to a finite-dimensional one via an orthogonal decomposition of the admissible function space. This reduction allowed us to derive a closed-form expression for the Airy stress function in terms of a linear combination of solutions to cell formula problems. An important application of these formulas lies in the numerical simulation of linearized plane strain elasticity. By solving the cell formula problems, which consist of a set of Dirichlet boundary value problems that can be easily implemented in a Finite Element solver, one can reconstruct the Airy stress function and thereby recover the associated strain and stress fields.

Future work will address the rigorous analysis of the regularized system in the asymptotic limit 
$\varepsilon\to 0$, the homogenization of systems of disclinations and dislocations, and the extension of this planar modeling framework to more general geometries such as plates and shells.

%\blu{commentare sulla scelta delle \eqref{eq_v_dD} e dire, sostanzialmente, che isoliamo le sorgenti essenziali di $\Delta^2 v=\de$ e non consideriamo tutto il testo. questo è coerente con il nostro lavoro \cite{CDLM2024}, tuttavia la scelta non è unica, perché: a meno di funzioni affini non altera la PDE; a meno di funzioni biarmoniche non altera l'equazione di campo (ma potrebbe alterare le condizioni al bordo). Se si cambiasse $v^\ii$ con un'altra che ha pezzi nel nucleo di $\Delta^2$, cambierebbe di conseguenza anche la $v^\cc$. Dovremo capire quali impatti ha questa cosa (o semplicemente dire che alcuni campi cambiano e amen).}

\appendix
\section{Some useful results}
We start by stating and proving a simple integration lemma which will be useful in the paper.
For a curve $\gamma\colon[0,L]\to\R^2$ parameterized by arch-length and oriented counter-clockwise, the unit tangent and the unit normal vectors are 
\begin{equation}\label{202308152252}
t(\lambda)=\gamma'(\lambda)=
\!\begin{pmatrix}
x_1'(\lambda)\\
x_2'(\lambda)
\end{pmatrix}\!=\!\begin{pmatrix}
\ud x_1\\
\ud x_2
\end{pmatrix}
\quad\text{and}\quad
n(\lambda)=\Pi t(\lambda)=
\!\begin{pmatrix}
x_2'(\lambda)\\
-x_1'(\lambda)
\end{pmatrix}\!=\!\begin{pmatrix}
\ud x_2\\
-\ud x_1
\end{pmatrix}.
\end{equation}
\begin{lemma}\label{202308142312}
Let $\gamma\colon[0,L]\to\R^2$ be a Jordan curve parameterized by arc length, and let $f\colon\R^2\to\R$ be a (single-valued) function.
Then we have 
\begin{equation}\label{202308142315}
\int_\gamma f\,\ud x_r =  -\int_\gamma x_r\partial_t f\,\ud \Huno \qquad\text{for $r=1,2$.} 
\end{equation}
\end{lemma}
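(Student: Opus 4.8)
The statement to prove is the integration-by-parts identity
\[
\int_\gamma f\,\ud x_r = -\int_\gamma x_r\,\partial_t f\,\ud\Huno,\qquad r=1,2,
\]
for a Jordan curve $\gamma\colon[0,L]\to\R^2$ parameterized by arc length and a single-valued function $f$. The plan is to reduce everything to the ordinary one-dimensional integration-by-parts formula on the interval $[0,L]$, exploiting the fact that $\gamma$ is closed, so no boundary terms survive.

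\textbf{Step 1: Rewrite both sides as integrals over $[0,L]$.} By the definition of the line integral against the coordinate differential, $\int_\gamma f\,\ud x_r = \int_0^L f(\gamma(\lambda))\,x_r'(\lambda)\,\ud\lambda$, where we write $x_r(\lambda)$ for the $r$-th component of $\gamma(\lambda)$. On the other side, by \eqref{202308152252} the tangential derivative is $\partial_t f = \langle\nabla f,t\rangle$ along $\gamma$, which by the chain rule equals $\frac{\ud}{\ud\lambda}\big(f(\gamma(\lambda))\big)$; hence $\int_\gamma x_r\,\partial_t f\,\ud\Huno = \int_0^L x_r(\lambda)\,\frac{\ud}{\ud\lambda}\big(f(\gamma(\lambda))\big)\,\ud\lambda$, using that $\ud\Huno = |\gamma'(\lambda)|\,\ud\lambda = \ud\lambda$ since $\gamma$ is parameterized by arc length.

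\textbf{Step 2: Apply one-dimensional integration by parts.} Set $g(\lambda)\coloneqq f(\gamma(\lambda))$. Then
\[
\int_0^L x_r(\lambda)\,g'(\lambda)\,\ud\lambda = \big[x_r(\lambda)g(\lambda)\big]_0^L - \int_0^L x_r'(\lambda)\,g(\lambda)\,\ud\lambda.
\]
Since $\gamma$ is a Jordan curve, $\gamma(0)=\gamma(L)$, so $x_r(0)=x_r(L)$ and (by single-valuedness of $f$) $g(0)=g(L)$; thus the boundary term $\big[x_r g\big]_0^L$ vanishes. Comparing with Step 1, this is exactly $\int_\gamma x_r\,\partial_t f\,\ud\Huno = -\int_\gamma f\,\ud x_r$, which rearranges to \eqref{202308142315}.

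\textbf{Main obstacle.} There is essentially no deep obstacle; the only points requiring a little care are the regularity assumptions needed to justify the chain rule $\partial_t f = \frac{\ud}{\ud\lambda}(f\circ\gamma)$ and the one-dimensional integration by parts (e.g.\ $f\in C^1$ near $\gamma$, or a density argument if $f$ is merely Sobolev), and the careful bookkeeping that $\ud x_r$ under the line-integral sign means $x_r'(\lambda)\,\ud\lambda$ with the counter-clockwise orientation fixed in \eqref{202308152252}. Everything else is the standard closed-curve integration-by-parts identity.
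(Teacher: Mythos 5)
Your proof is correct and is essentially the paper's own argument: the paper integrates $\frac{\ud}{\ud\lambda}(x_r f)$ around the closed curve (which vanishes by single-valuedness) and expands by the product rule, which is exactly your one-dimensional integration by parts on $[0,L]$ with the boundary term killed by $\gamma(0)=\gamma(L)$ and $f(\gamma(0))=f(\gamma(L))$.
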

\begin{proof}
Let $\gamma(\lambda)=(x_1(\lambda),x_2(\lambda))$ for every $\lambda\in[0,L]$. 
Equalities \eqref{202308142315} follow from the fact that, thanks to the single-valuedness of the maps $(x_1,x_2)\mapsto x_r\,f(x_1,x_2)$ (for $r=1,2$), we have (recalling that $\partial_t=\ud/\ud\lambda$)
\begin{equation*}
0=\int_\gamma \frac{\ud}{\ud \lambda} (x_rf)\,\ud \lambda=\int_\gamma f\, x_r'\,\ud \lambda +\int_\gamma x_r\frac{\ud f}{\ud\lambda}\,\ud \lambda = \int_\gamma f\,\ud x_r + \int_\gamma x_r \partial_t f\,\ud  \Huno\,. \qedhere
\end{equation*}
\end{proof}

We prove here a technical lemma about integration by parts.
\begin{lemma}\label{lemma_ibp}
Let $U\subset\R^{2}$ be an open set and let $v\in\mathcal{C}^4(U)$. 
If $\phi\colon U\to\R{}$ is a regular enough function, we have
\begin{equation}\label{eq_ibp}
\begin{split}
&\, \frac{1+\nu}{E}\int_{U} (\nabla^2v:\nabla^2\phi-\nu\Delta v\Delta \phi)\,\ud x\\
=&\, \frac{1-\nu^2}{E}\bigg[\int_{U} \phi\Delta^2 v\,\ud x+\int_{\partial U} \bigg(\Delta v\, \partial_n\phi-\phi\partial_n(\Delta v)+\frac{\langle \nabla^2v\,t,\Pi(\nabla \phi)\rangle}{1-\nu}\bigg)\,\ud\Huno\bigg].
\end{split}
\end{equation}
\end{lemma}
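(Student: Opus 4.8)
The plan is to prove \eqref{eq_ibp} by two successive integrations by parts that move every derivative onto $v$, and then to reconcile the boundary terms so produced with the right-hand side via a normal--tangential splitting along $\partial U$. Since $\tfrac{1-\nu^2}{E}=(1-\nu)\,\tfrac{1+\nu}{E}$, dividing through by $\tfrac{1+\nu}{E}$ reduces \eqref{eq_ibp} to the identity
\[
\int_U \big(\nabla^2 v:\nabla^2\phi-\nu\,\Delta v\,\Delta\phi\big)\,\ud x=(1-\nu)\!\int_U \phi\,\Delta^2 v\,\ud x+\int_{\partial U}\!\Big((1-\nu)\,\Delta v\,\partial_n\phi-(1-\nu)\,\phi\,\partial_n(\Delta v)+\langle\nabla^2 v\,t,\Pi(\nabla\phi)\rangle\Big)\,\ud\Huno,
\]
which I would treat one summand at a time.

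For the Hessian term, one integration by parts (performed in each Cartesian direction, using $\sum_j v_{ijj}=\partial_i\Delta v$) gives $\int_U \nabla^2 v:\nabla^2\phi\,\ud x=-\int_U \nabla(\Delta v)\cdot\nabla\phi\,\ud x+\int_{\partial U}\langle\nabla^2 v\,n,\nabla\phi\rangle\,\ud\Huno$, and a second one turns $-\int_U \nabla(\Delta v)\cdot\nabla\phi$ into $\int_U\phi\,\Delta^2 v\,\ud x-\int_{\partial U}\phi\,\partial_n(\Delta v)\,\ud\Huno$. For the Laplacian term, two integrations by parts analogously yield $\int_U \Delta v\,\Delta\phi\,\ud x=\int_U\phi\,\Delta^2 v\,\ud x+\int_{\partial U}\big(\Delta v\,\partial_n\phi-\phi\,\partial_n(\Delta v)\big)\,\ud\Huno$. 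Forming the combination $\nabla^2 v:\nabla^2\phi-\nu\,\Delta v\,\Delta\phi$ and collecting terms produces exactly the factor $(1-\nu)$ in front of both $\int_U\phi\,\Delta^2 v$ and $\int_{\partial U}\phi\,\partial_n(\Delta v)$, and leaves the residual boundary term $\int_{\partial U}\big(\langle\nabla^2 v\,n,\nabla\phi\rangle-\nu\,\Delta v\,\partial_n\phi\big)\,\ud\Huno$.

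It then remains to verify the \emph{pointwise} boundary identity $\langle\nabla^2 v\,n,\nabla\phi\rangle=\Delta v\,\partial_n\phi+\langle\nabla^2 v\,t,\Pi(\nabla\phi)\rangle$ on $\partial U$, which together with the previous step gives the claim. I would decompose $\nabla\phi=(\partial_n\phi)\,n+(\partial_t\phi)\,t$; recalling from \eqref{202308152252} that $n=\Pi t$, and hence $\Pi n=\Pi^2 t=-t$ (since $\Pi$ is a rotation by $-\pi/2$), one gets $\Pi(\nabla\phi)=-(\partial_n\phi)\,t+(\partial_t\phi)\,n$. Writing $v_{nn}:=\langle\nabla^2 v\,n,n\rangle$, $v_{tt}:=\langle\nabla^2 v\,t,t\rangle$, and $v_{nt}:=\langle\nabla^2 v\,n,t\rangle=\langle\nabla^2 v\,t,n\rangle$ (the last equality by the symmetry of $\nabla^2 v$), the left-hand side equals $v_{nn}\,\partial_n\phi+v_{nt}\,\partial_t\phi$, while the right-hand side equals $(v_{nn}+v_{tt})\,\partial_n\phi-v_{tt}\,\partial_n\phi+v_{nt}\,\partial_t\phi$, since the trace is invariant so $\Delta v=v_{nn}+v_{tt}$; the two sides coincide.

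The only slightly delicate point is the careful bookkeeping of signs and of normal versus tangential components in the last step; everything else is textbook integration by parts, valid under the stated regularity (e.g.\ $v\in\mathcal C^4(\overline U)$, $\phi\in\mathcal C^2(\overline U)$, with $\partial U$ of class $\mathcal C^1$, which is what ``regular enough'' is meant to cover), and no connectedness or simple-connectedness hypothesis on $U$ is needed.
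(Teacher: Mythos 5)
Your proof is correct and follows essentially the same route as the paper: two integrations by parts pushing all derivatives onto $v$, followed by a rearrangement of the residual boundary term into $\Delta v\,\partial_n\phi$ plus $\langle\nabla^2 v\,t,\Pi(\nabla\phi)\rangle$. The only (cosmetic) difference is that you package that rearrangement as the pointwise frame identity $\langle\nabla^2 v\,n,\nabla\phi\rangle=\Delta v\,\partial_n\phi+\langle\nabla^2 v\,t,\Pi(\nabla\phi)\rangle$ using the decomposition $\nabla\phi=(\partial_n\phi)n+(\partial_t\phi)t$ and $n=\Pi t$, whereas the paper integrates the six Cartesian terms separately and adds and subtracts $v_{yy}\phi_x n_x$ and $v_{xx}\phi_y n_y$ to reach the same expression.
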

\begin{proof}
The proof is a matter of a simple computation, using the Gauss--Green formula a few times. 
Recalling that 
\[\begin{split}
\nabla^2 v:\nabla^2\phi = &\, v_{xx}\phi_{xx}+v_{xy}\phi_{xy}+v_{yx}\phi_{yx}+v_{yy}\phi_{yy}\,,\\
\Delta v\Delta\phi= &\, v_{xx}\phi_{xx}+v_{xx}\phi_{yy}+v_{yy}\phi_{xx}+v_{yy}\phi_{yy}\,,
\end{split}\]
we can integrate by parts each of the terms above and obtain
\begin{subequations}
\begin{eqnarray}
\int_{U} v_{xx}\phi_{xx}\,\ud x=&\displaystyle\int_{\partial U} (v_{xx}\phi_xn_x-v_{xxx}n_x\phi)\,\ud\Huno+\int_U v_{xxxx}\phi\,\ud x, \label{eq_ibp1}\\
\int_{U} v_{yy}\phi_{yy}\,\ud x=&\displaystyle\int_{\partial U} (v_{yy}\phi_yn_y-v_{yyy}n_y\phi)\,\ud\Huno+\int_U v_{yyyy}\phi\,\ud x, \label{eq_ibp2}\\
\int_{U} v_{xy}\phi_{xy}\,\ud x=&\displaystyle\int_{\partial U} (v_{xy}\phi_xn_y-v_{xyy}n_x\phi)\,\ud\Huno+\int_U v_{xxyy}\phi\,\ud x, \label{eq_ibp3}\\
\int_{U} v_{yx}\phi_{yx}\,\ud x=&\displaystyle\int_{\partial U} (v_{xy}\phi_yn_x-v_{xxy}n_y\phi)\,\ud\Huno+\int_U v_{xxyy}\phi\,\ud x, \label{eq_ibp4}\\
\int_{U} v_{xx}\phi_{yy}\,\ud x=&\displaystyle\int_{\partial U} (v_{xx}\phi_yn_y-v_{xxy}n_y\phi)\,\ud\Huno+\int_U v_{xxyy}\phi\,\ud x, \label{eq_ibp5}\\
\int_{U} v_{yy}\phi_{xx}\,\ud x=&\displaystyle\int_{\partial U} (v_{yy}\phi_xn_x-v_{yyx}n_x\phi)\,\ud\Huno+\int_U v_{xxyy}\phi\,\ud x. \label{eq_ibp6}
\end{eqnarray}
\end{subequations}
By using \eqref{eq_ibp1}--\eqref{eq_ibp4} and by adding and subtracting $v_{yy}\phi_xn_x$ and $v_{xx}\phi_yn_y$\,, we have
\[\begin{split}
\int_{U} \nabla^2v:\nabla^2\phi\,\ud x=& \int_{U} (v_{xx}\phi_{xx}+v_{xy}\phi_{xy}+v_{yx}\phi_{yx}+v_{yy}\phi_{yy})\,\ud x\\
=& \int_U \Delta^2v\,\phi\,\ud x+\int_{\partial U} \big(\Delta v\,\partial_n\phi - \phi\partial_n(\Delta v)\big)\,\ud\Huno\\
&\,+\int_{\partial U} (v_{xy}n_y\phi_x+v_{xy}n_x\phi_y-v_{xx}n_y\phi_y-v_{yy}n_x\phi_x)\,\ud\Huno\\
=& \int_U \Delta^2v\,\phi\,\ud x+\int_{\partial U} \big(\Delta v\,\partial_n\phi - \phi\partial_n(\Delta v)+\langle \nabla^2v\,t,\Pi(\nabla\phi)\rangle \big)\, \ud\Huno;\\
\end{split}\]
by using \eqref{eq_ibp1}, \eqref{eq_ibp2}, \eqref{eq_ibp5}, and \eqref{eq_ibp6}, and by adding and subtracting $v_{yy}\phi_xn_x$ and $v_{xx}\phi_yn_y$\,, we have
\[\begin{split}
\int_{U} \Delta v\Delta\phi\,\ud x=& \int_U (v_{xx}\phi_{xx}+v_{xx}\phi_{yy}+v_{yy}\phi_{xx}+v_{yy}\phi_{yy})\,\ud x \\
=& \int_{U} \Delta^2v\,\phi\,\ud x+\int_{\partial U} \big(\Delta v\,\partial_n\phi-\phi\partial_n(\Delta v)\big)\,\ud\Huno.
\end{split}\]
Formula \eqref{eq_ibp} follows.
\end{proof}

\section{Proof of Proposition \ref{prop_boundary_cond_incomp}}\label{app_B}
Before proving the proposition, we present a technical lemma which contains the essential details of the proof of \eqref{eq_necessary_incompatible_boundary}. 
We will consider a sample domain with the topology of an annulus.
We will suppose, without loss of generality, that $\Omega=B_1(0)$, that $\rho<1$, and that $\defects=\{0\}$, so that $\Omega^{\nsc}=B_1(0)\setminus\overline{B}_\rho(0)$\,; finally, we let $\Gamma\coloneqq\partial B_\rho(0)$.
\begin{lemma}\label{lemma_technical_defects}
Let $A\subset\R^2$ be as above. 
For $s\in\R\setminus\{0\}$ and $b\in\R^2\setminus\{0\}$, let $\theta=s\de_0\in\WD(\Omega)$ and $\alpha=b\de_0\in\ED(\Omega)$; moreover, let $v_d^s\coloneqq -sv_d \colon\R^2\to\R$ and $v_D^b\coloneqq -b\times v_D \colon\R^2\to\R$, respectively, where $v_d$ and $v_D$ are defined in \eqref{eq_v_dD}, that is
\begin{equation}\label{eq_vdvDbar}
v_d^s(x)=-\frac{sE}{1-\nu^2}\frac{|x|^2}{16\pi}\log|x|^2
\qquad\text{and}\qquad
v_D^b(x)=-\frac{E}{1-\nu^2}\frac{b\times x}{8\pi}(\log|x|^2+1).
\end{equation}
Then we have 
\begin{equation}\label{eq_202409051843}
\begin{cases}
\displaystyle \frac{1-\nu^2}{E}\int_{\Gamma} \partial_n(\Delta v_d^s)\,\ud\Huno=s=\theta(\Omega)\,,\\[3mm]
\displaystyle \frac{1-\nu^2}{E}\int_{\Gamma} \bigg( x_1\partial_t(\Delta v_d^s)-x_2\partial_n(\Delta v_d^s) +\frac{(\nabla^2 v_d^s\, t)_1}{1-\nu}\bigg)\,\ud\Huno=0\,,\\[3mm]
\displaystyle \frac{1-\nu^2}{E}\int_{\Gamma} \bigg( x_1\partial_n(\Delta v_d^s)+x_2\partial_t(\Delta v_d^s) +\frac{(\nabla^2 v_d^s\, t)_2}{1-\nu}\bigg)\,\ud\Huno=0\,,
\end{cases}
\end{equation}
and
\begin{equation}\label{eq_2024090518431}
\begin{cases}
\displaystyle \frac{1-\nu^2}{E} \int_{\Gamma} \partial_n(\Delta  v_D^b)\,\ud\Huno=0\,,\\[3mm]
\displaystyle \frac{1-\nu^2}{E}\int_{\Gamma} \bigg( x_1\partial_t(\Delta v_D^b)-x_2\partial_n(\Delta v_D^b) +\frac{(\nabla^2 v_D^b\, t)_1}{1-\nu}\bigg)\,\ud\Huno=b_1=(\alpha(\Omega))_1\,,\\[3mm]
\displaystyle \frac{1-\nu^2}{E}\int_{\Gamma} \bigg( x_1\partial_n(\Delta v_D^b)+x_2\partial_t(\Delta v_D^b) +\frac{(\nabla^2 v_D^b\, t)_2}{1-\nu}\bigg)\,\ud\Huno=b_2=(\alpha(\Omega))_2\,,
\end{cases}
\end{equation}
where $n$ is the outer unit normal to $\partial\Omega^\nsc$ and $t$ points clockwise.
\end{lemma}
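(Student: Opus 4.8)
The plan is to prove both families of identities by an explicit coordinate computation on the circle $\Gamma=\partial B_\rho(0)$; this is the model case from which, after a translation and the superposition principle, Proposition~\ref{prop_boundary_cond_incomp} will follow. I fix the orientations once and for all: on $\Gamma$ the outward unit normal to $\Omega^{\nsc}=B_1(0)\setminus\overline{B}_\rho(0)$ is $n=-x/\rho$ and the clockwise unit tangent is $t=\Pi(x/\rho)=(x_2/\rho,-x_1/\rho)$, so that $x\cdot t=0$ on $\Gamma$. Beyond this I use only the elementary circle integrals $\int_\Gamma 1\,\ud\Huno=2\pi\rho$, $\int_\Gamma x_r\,\ud\Huno=\int_\Gamma x_1x_2\,\ud\Huno=0$, and $\int_\Gamma x_1^2\,\ud\Huno=\int_\Gamma x_2^2\,\ud\Huno=\pi\rho^3$; for the integrands containing $\partial_t(\Delta(\cdot))$ one could alternatively integrate by parts along $\Gamma$ via Lemma~\ref{202308142312}.

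\emph{Disclination part, \eqref{eq_202409051843}.} From $\Delta(|x|^2\log|x|^2)=4\log|x|^2+8$ $(x\neq0)$ one sees that $\Delta v_d^s=-\tfrac{sE}{4\pi(1-\nu^2)}(\log|x|^2+2)$ is \emph{radial}, hence $\partial_t(\Delta v_d^s)=0$ on $\Gamma$ and $\partial_n(\Delta v_d^s)=-\partial_r(\Delta v_d^s)$ equals the constant $\tfrac{sE}{2\pi(1-\nu^2)\rho}$ on $\Gamma$. Writing $v_d^s=\kappa|x|^2\log|x|^2$ with $\kappa=-\tfrac{sE}{16\pi(1-\nu^2)}$, one computes $\nabla^2 v_d^s=2\kappa\bigl[(\log|x|^2+1)I+\tfrac{2}{|x|^2}\,x\otimes x\bigr]$, so $\nabla^2 v_d^s\,t=2\kappa(\log|x|^2+1)\,t$ on $\Gamma$ (because $x\cdot t=0$), i.e.\ a fixed scalar times $t$. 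Substituting, the first integral in \eqref{eq_202409051843} collapses to $\tfrac{1-\nu^2}{E}\cdot\tfrac{sE}{2\pi(1-\nu^2)\rho}\cdot2\pi\rho=s=\theta(\Omega)$, while in the second and third integrals every summand is a scalar multiple of $x_1$ or of $x_2$, hence integrates to $0$.

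\emph{Dislocation part, \eqref{eq_2024090518431}.} Write $v_D^b(x)=\mu(b\times x)(\log|x|^2+1)$ with $\mu=-\tfrac{E}{8\pi(1-\nu^2)}$ and $b^\perp\coloneqq\nabla(b\times x)=(-b_2,b_1)$. Since $\Delta(b\times x)=0$, $\Delta\log|x|^2=0$, and $b^\perp\cdot\nabla\log|x|^2=\tfrac{2(b\times x)}{|x|^2}$, we get $\Delta v_D^b=\tfrac{4\mu(b\times x)}{|x|^2}$; differentiating it and using on $\Gamma$ the identities $b^\perp\cdot x=b\times x$ and $b^\perp\cdot t=-\tfrac{b\cdot x}{\rho}$ gives $\partial_n(\Delta v_D^b)=\tfrac{4\mu(b\times x)}{\rho^3}$ and $\partial_t(\Delta v_D^b)=-\tfrac{4\mu(b\cdot x)}{\rho^3}$. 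From $\nabla v_D^b=\mu\bigl[b^\perp(\log|x|^2+1)+(b\times x)\tfrac{2x}{|x|^2}\bigr]$ one computes the Hessian and finds, on $\Gamma$, $\nabla^2 v_D^b\,t=\tfrac{2\mu}{\rho^2}\bigl[(b\times x)\,t-\tfrac{b\cdot x}{\rho}\,x\bigr]$, whose components are linear combinations of $x_1x_2$ and of $x_1^2-x_2^2$. Substituting into \eqref{eq_2024090518431}: the first identity reduces to a constant multiple of $\int_\Gamma(b\times x)\,\ud\Huno=0$; in the second, the term $x_1\partial_t(\Delta v_D^b)$ and the term $-x_2\partial_n(\Delta v_D^b)$ each contribute $\tfrac12 b_1$ (using $\int_\Gamma x_1^2\,\ud\Huno=\int_\Gamma x_2^2\,\ud\Huno=\pi\rho^3$ and $\int_\Gamma x_1x_2\,\ud\Huno=0$), while the Hessian--tangent term integrates to $0$ since $\int_\Gamma(x_1^2-x_2^2)\,\ud\Huno=\int_\Gamma x_1x_2\,\ud\Huno=0$; hence the left-hand side equals $b_1=(\alpha(\Omega))_1$. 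The third identity follows in the same way with the two indices interchanged, giving $b_2=(\alpha(\Omega))_2$.

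The argument has no real conceptual difficulty; it is essentially bookkeeping. The two points that deserve care are keeping the orientation conventions consistent throughout (the whole normalisation hinges on $n=-x/\rho$ and on $t$ being clockwise), and noticing that the \emph{a priori} nontrivial Hessian--tangent contributions cancel precisely because the second moments $\int_\Gamma x_1x_2\,\ud\Huno$ and $\int_\Gamma(x_1^2-x_2^2)\,\ud\Huno$ over a centred circle vanish — this is exactly what makes the two halves $\tfrac12 b_r+\tfrac12 b_r$ combine into the clean value $b_r$.
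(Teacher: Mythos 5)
Your proof is correct, but it takes a genuinely different route from the paper's. You compute everything explicitly on $\Gamma$: the Laplacians $\Delta v_d^s$ and $\Delta v_D^b$, their normal and tangential derivatives in the stated orientations ($n=-x/\rho$, $t=\Pi(x/\rho)$, consistent with \eqref{202308152252}), and the Hessian--tangent products, and you conclude with the elementary circle moments $\int_\Gamma x_r\,\ud\Huno=\int_\Gamma x_1x_2\,\ud\Huno=\int_\Gamma (x_1^2-x_2^2)\,\ud\Huno=0$ and $\int_\Gamma x_r^2\,\ud\Huno=\pi\rho^3$. I verified your formulas ($\Delta v_d^s$ radial with $\partial_n(\Delta v_d^s)=\tfrac{sE}{2\pi(1-\nu^2)\rho}$ on $\Gamma$; $\partial_n(\Delta v_D^b)=\tfrac{4\mu(b\times x)}{\rho^3}$, $\partial_t(\Delta v_D^b)=-\tfrac{4\mu(b\cdot x)}{\rho^3}$, $\nabla^2 v_D^b\,t=\tfrac{2\mu}{\rho^2}\bigl[(b\times x)\,t-\tfrac{b\cdot x}{\rho}\,x\bigr]$) and the resulting values $s,0,0$ and $0,b_1,b_2$: all correct. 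The paper instead obtains the $\partial_n/\partial_t$ identities by duality: it pairs the distributional equations $\tfrac{1-\nu^2}{E}\Delta^2 v_d^s=-s\de_0$ and $\tfrac{1-\nu^2}{E}\Delta^2 v_D^b=-(b\times\nabla)\de_0$ with test functions that are affine in $B_\rho(0)$, integrates by parts so that everything lands on $\Gamma$, and reads off the three relations by choosing $\varphi=a_0$, $a_1x_1$, $a_2x_2$; only the Hessian--tangent integrals are computed explicitly there, as in your argument. The paper's route is more structural (it explains, without differentiating the Green's functions, why the boundary integrals carry exactly the defect charges, and it is the same mechanism used later for the Euler--Lagrange boundary conditions in Proposition~\ref{cor_strongEL}), while yours is more elementary and makes every sign verifiable by hand; indeed it doubles as a useful cross-check, since in the paper's intermediate item for the test function $a_2x_2$ in the dislocation case the right-hand side is displayed as $-b_1$, whereas the duality pairing, like your computation, gives $+b_1$, which is what the final statement \eqref{eq_2024090518431} requires. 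The only structural limitation of your argument is that the vanishing of the mixed second moments uses that $\Gamma$ is a circle centred at the defect, but that is precisely the model situation of the lemma, and the general case follows by translation and superposition as in Proposition~\ref{prop_boundary_cond_incomp}.
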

\begin{proof}
We start by noting that the function $v_d^s$ and $v_D^s$ defined in the statement of the lemma solve
\begin{equation}\label{eq_202409051736}
\frac{1-\nu^2}{E}\Delta^2 v_d^s=-s\de_0
\qquad\text{and}\qquad
\frac{1-\nu^2}{E}\Delta^2 v_D^b=-(b\times\nabla)\de_0=b_2\partial_{x_1}\de_0-b_1\partial_{x_2}\de_0\,,
\end{equation}
in $H^{-2}(B_1(0))$ and in $\mathcal{D}'(B_1(0))$, respectively.
In order to draw information on the contribution on $\Gamma$, the internal boundary of the annular domain $\Omega^{\nsc}$, we can test the equations in \eqref{eq_202409051736} against functions in $\Cnew(\Omega(\{0\}))$, namely we consider test functions $\varphi\in H^2_0(B_1(0))$ such that $\varphi=a$ in $B_\rho(0)$, for a certain affine function $a$.
These test functions are such that 
$$
\begin{array}{rl}
\varphi|_{\Gamma}=a|_{\Gamma}= & (a_0+a_1x_1+a_2x_2)|_{\Gamma}\\[1mm]
= & a_0+ a_1\rho\cos\frac\lambda\rho+ a_2\rho\sin\frac\lambda\rho
\end{array}
\quad\text{and}\quad
\begin{array}{rl}
\partial_n \varphi|_{\Gamma}=\partial_n a|_{\Gamma}= & -a_1n_{\rho,1}-a_2n_{\rho,2}\\[1mm]
=& -a_1\cos\frac\lambda\rho - a_2\sin\frac\lambda\rho 
\end{array}$$
for $\lambda\in[0,2\pi\rho]$, where $n_\rho=(n_{\rho,1},n_{\rho,2})$ is the outer unit normal to $\partial B_\rho(0)$.

For $v=v_d^s,v_D^b$ and for any $\varphi\in\Cnew(\Omega_\varepsilon(\{0\}))$, up to the factor $(1-\nu^2)/E$, we have, by integration by parts (here we must choose the normal to $\Gamma$ pointing outwards of $\Omega^{\nsc}$, that is towards the origin, that is $-n_\rho$),
\begin{equation}\label{eq_202409051842}
\begin{split}
\int_\Omega \Delta^2 v\,\varphi\,\ud x= & \int_{\Omega^{\nsc}} \Delta^2 v\,\varphi\,\ud x+ \int_{B_\rho(0)} \Delta^2 v\,\varphi\,\ud x\\
= & \int_{\Gamma} \partial_{n_\rho}(\Delta v)\varphi\,\ud\Huno-\int_{B_\rho(0)} \langle \nabla(\Delta v),\nabla \varphi\rangle\,\ud x \\
= & \int_{\Gamma} \partial_{n_\rho}(\Delta v)\varphi\,\ud\Huno-\int_{\Gamma} \Delta v\partial_{n_\rho}\varphi\,\ud\Huno \\ 
= & \int_{\Gamma} \partial_{n_\rho}(\Delta v)[a_0+a_1x_1+a_2x_2]\,\ud\Huno - \int_{\Gamma} \Delta v[a_1n_{\rho,1}+a_2n_{\rho,2}]\,\ud\Huno \\
= & \int_{\Gamma} \partial_{n_\rho}(\Delta v)[a_0+a_1x_1+a_2x_2]\,\ud\Huno + \int_{\Gamma} \partial_{t_\rho}(\Delta v) (a_1x_2- a_2x_1)\,\ud\Huno,
\end{split}
\end{equation}
since $\Delta^2 v$ vanishes in $\Omega^{\nsc}$ and since (being $\varphi$ affine in $B_\rho(0)$) $\Delta \varphi=0$; also, we have used \eqref{202308152252} and \eqref{202308142315} with $f=\Delta v$ in the last equality.
In the case of a disclination, $v= v_d^s$\,, by choosing alternatively only one coefficient of $\varphi$ not to vanish, from the first equation in \eqref{eq_202409051736} and from \eqref{eq_202409051842} we obtain that 
\begin{enumerate}
\item if $a_0\neq0=a_1=a_2$, then $\displaystyle -sa_0=-s\varphi(0)=\int_{\Gamma} a_0\partial_{n_\rho}(\Delta v_d^s)\,\ud\Huno\,$;
\item if $a_1\neq0=a_0=a_2$, then $\displaystyle 0=-s\varphi(0)=\int_{\Gamma} [x_1\partial_{\rho,n}(\Delta v_d^s)+x_2\partial_t(\Delta v_d^s)]\,\ud\Huno\,$;
\item if $a_2\neq0=a_0=a_1$, then $\displaystyle 0=s\varphi(0)=\int_{\Gamma} [x_2\partial_{n_\rho}(\Delta v_d^s)-x_1\partial_{t_\rho}(\Delta v_d^s)]\,\ud\Huno\,$.
\end{enumerate}
Moreover, a straightforward computation yields
$$\nabla^2 v_d^s\,t_\rho|_{\Gamma}=-\frac{sE}{16\pi(1-\nu^2)}
\begin{pmatrix}
2x_2(1+\log|x|^2)\\
-2x_1(1+\log|x|^2)
\end{pmatrix},$$
so that
\begin{equation}\label{eq_Hdt}
\int_{\Gamma} (\nabla^2 v_d^s\,t_\rho)_r\, \ud\Huno=0,\quad\text{for $r=1,2$.}
\end{equation}
The three relations above and \eqref{eq_Hdt} (reinstating the factor $(1-\nu^2)/E$ and recalling that $n=-n_\rho$ and $t=-t_\rho$) yield the necessary conditions \eqref{eq_202409051843}.

Let us now turn to the case of the dislocation, for which $v= v_D^b$\,. 
By testing the right-hand side of the second equation in \eqref{eq_202409051736} with a test function $\varphi$ which is affine on $B_\rho(0)$, we obtain
$$\langle -(b\times\nabla)\de_0 ,\varphi\rangle = \langle b_2\partial_{x_1}\de_0-b_1\partial_{x_2}\de_0, \varphi \rangle= b_1\partial_{x_2}\varphi(0)-b_2\partial_{x_1}\varphi(0)=b_1a_2-b_2a_1\,.$$
By choosing alternatively only one coefficient of $\varphi$ not to vanish, from \eqref{eq_202409051842}  and the chain of equalities above, we obtain that
\begin{enumerate}
\item if $a_0\neq0=a_1=a_2$, then $\displaystyle 0=\int_{\Gamma} \partial_{n_\rho}(\Delta v_D^b)\,\ud\Huno\,$;
\item if $a_1\neq0=a_0=a_2$, then $\displaystyle -b_2=\int_{\Gamma} [x_1\partial_{n_\rho}(\Delta  v_D^b)+x_2\partial_{t_\rho}(\Delta v_D^b)]\,\ud\Huno\,$;
\item if $a_2\neq0=a_0=a_1$, then $\displaystyle -b_1= \int_{\Gamma} [x_2\partial_{n_\rho}(\Delta  v_D^b)-x_1\partial_{t_\rho}(\Delta v_D^b)]\,\ud\Huno\,$.
\end{enumerate}
Moreover, a straightforward computation yields
$$\nabla^2 v_D^b\,t_{\rho}|_{\Gamma}=-\frac{E}{1-\nu^2}\begin{pmatrix}
\displaystyle\frac{b_1(x_2^2-x_1^2)-2b_2x_1x_2}{x^2+y^2}\\
\displaystyle\frac{b_2(x_1^2-x_2^2)+2b_1x_1x_2}{x^2+y^2}
\end{pmatrix},$$
so that 
\begin{equation}\label{eq_HDt}
\int_{\Gamma} (\nabla^2 v_D^b\,t_\rho)_r\, \ud\Huno=0,\quad\text{for $r=1,2$.}
\end{equation}
The three relations above and \eqref{eq_HDt} (reinstating the factor $(1-\nu^2)/E$ and recalling that $n=-n_\rho$ and $t=-t_\rho$) yield the necessary conditions \eqref{eq_2024090518431}.
\end{proof}
\begin{proof}[Proof of Proposition~\ref{prop_boundary_cond_incomp}]
From the discussion in Section~\ref{sec_prel}, in particular from \eqref{eq_incexplicit}, \eqref{eq_formaltrans}, and \eqref{eq_202409041452}, we know that the function $v^\ii$ verifies \eqref{eq_bilaplacianappendix}.
Moreover, \eqref{eq_necessary_incompatible_bulk} follows immediately from \eqref{eq_inc} and \eqref{eq_formaltrans}, since $\spt(\alpha)\cup\spt(\theta)=\defects\cap\Omega_\varepsilon(\defects)=\emptyset$ by construction of $\Omega_\varepsilon(\defects)$.

To prove \eqref{eq_necessary_incompatible_boundary}, we notice that, by linearity, it suffices to look at one defect site $\xi\in\defects$ alone, where we place once a disclination of Frank angle $s$ and once a dislocation of Burgers vector $b$. 
The proof in the prototypical situation $\xi=0$ is dealt with in Lemma \ref{lemma_technical_defects}, and the result in the general case can be obtained by superposition and translations, as in \eqref{eq_vp}.
The only thing that needs to be verified is that 
\begin{equation}\label{eq_cross_terms}
\begin{cases}
\displaystyle \frac{1-\nu^2}{E} \int_{\Gamma^{i_1}} \partial_n(\Delta v^{i_2})\,\ud\Huno=0,\\[3mm] %-\hat\theta(\Omega^{i_1})\,,\\[3mm]
\displaystyle \frac{1-\nu^2}{E} \int_{\Gamma^{i_1}} \bigg(x_1\partial_t(\Delta v^{i_2})-x_2\partial_n(\Delta v^{i_2})+\frac{(\nabla^2 v^{i_2}\,t)_1}{1-\nu}\bigg)\,\ud\Huno=0, %-(\hat\alpha(\Omega^{i_1}))_1\,.
\\[3mm]
\displaystyle \frac{1-\nu^2}{E} \int_{\Gamma^{i_1}} \bigg(x_1\partial_n(\Delta v^{i_2})+x_2\partial_t(\Delta v^{i_2})+\frac{(\nabla^2 v^{i_2}\,t)_2}{1-\nu}\bigg)\,\ud\Huno=0,\\ %-(\hat\alpha(\Omega^{i_1}))_2\,,
\end{cases}
\end{equation}
whenever $i_1\neq i_2$, and where we denote by $v^i$ either one of the functions $v_D^j$ or $v_d^k$ defined in \eqref{eq_vp}, but this is an immediate consequence of the compatibility of $v^{i_2}$ away from~$\xi^{i_1}$.
The proof is concluded.
\end{proof}

\section{Useful properties of the Monge-Amp\`{e}re operator}
We recall here the definition of the Monge-Amp\`{e}re operator and present two properties of interest.
\begin{definition}[Monge-Amp\`{e}re operator]\label{sec:24241126922}
Let $\Omega\subset\mathbb{R}^2$ be an open set; for any $\xi,\eta \in H^2(\Omega)$, the Monge-Amp\`{e}re operator is defined as
\begin{equation}
[\xi, \eta](x)\coloneqq \cof(\nabla^2 \xi(x)) : \nabla^2 \eta(x) \qquad \text{for a.e.~$x \in \Omega$.}
\end{equation}
\end{definition}
\begin{lemma}[{\cite[Theorem~5.8-2]{ciarlet97}}] 
\label{Lemma: Ciarlet's lemma}
Let $\Omega\subset\mathbb{R}^2$ be a bounded, connected, open set with Lipschitz boundary. 
Let $\xi,\eta,\chi\in H^2(\Omega)$ and assume that at least one of them belongs to $H^2_0(\Omega)$. 
Then $\int_{\Omega} [\xi, \eta](x) \chi(x)\, \ud x$ is symmetric, that is
\begin{equation}\label{eq_sympropCiarlet}
\int_{\Omega} [\xi, \eta](x) \chi(x)\,\ud x = \int_{\Omega} [\chi, \xi](x) \eta(x)\,\ud x = 
\int_{\Omega} [\eta, \chi](x) \xi(x)\,\ud x.
\end{equation} 
\end{lemma}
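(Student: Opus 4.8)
The plan is to prove the identity first for smooth data in which one of the three functions is compactly supported — where it reduces to two integrations by parts — and then to extend it by density, the only dimension‑sensitive ingredient being the embedding $H^2(\Omega)\hookrightarrow L^\infty(\Omega)$, valid precisely because $\Omega\subset\R^2$.

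First I would set $T(\xi,\eta,\chi):=\int_\Omega [\xi,\eta]\chi\,\ud x$ and check that $T$ is a bounded trilinear form on $H^2(\Omega)^3$: one has $[\xi,\eta]=\cof(\nabla^2\xi):\nabla^2\eta\in L^1(\Omega)$ with $\|[\xi,\eta]\|_{L^1}\le C\|\xi\|_{H^2}\|\eta\|_{H^2}$, while $\chi\in H^2(\Omega)\hookrightarrow L^\infty(\Omega)$ since $\dim\Omega=2$, so $|T(\xi,\eta,\chi)|\le C\|\xi\|_{H^2}\|\eta\|_{H^2}\|\chi\|_{H^2}$. In particular $T$ is continuous in each argument. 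The algebraic input is the elementary cofactor identity $\Dive\big(\cof(\nabla^2 w)\big)=0$ (a direct computation in two dimensions, valid for $w\in\mathcal{C}^\infty(\overline\Omega)$ and, distributionally, for $w\in H^2(\Omega)$), which yields $[\xi,\eta]=\cof(\nabla^2\xi):\nabla^2\eta=\Dive\big(\cof(\nabla^2\xi)\nabla\eta\big)$.

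Now take $\xi,\eta\in\mathcal{C}^\infty(\overline\Omega)$ and $\chi\in\mathcal{C}_c^\infty(\Omega)$. Integrating by parts, the boundary term $\int_{\partial\Omega}\chi\,(\cof(\nabla^2\xi)\nabla\eta)\cdot n\,\ud\Huno$ vanishes because $\chi$ has compact support, so $T(\xi,\eta,\chi)=-\int_\Omega \cof(\nabla^2\xi)\nabla\eta\cdot\nabla\chi\,\ud x$. Since $\cof(\nabla^2\xi)$ is a \emph{symmetric} matrix field, the right–hand side is unchanged under $\eta\leftrightarrow\chi$; integrating by parts back (the boundary term now carries the factor $\nabla\chi$, again $0$ by compact support, and $\Dive(\cof(\nabla^2\xi)\nabla\chi)=[\xi,\chi]$ by the cofactor identity) gives $-\int_\Omega\cof(\nabla^2\xi)\nabla\chi\cdot\nabla\eta\,\ud x=T(\xi,\chi,\eta)$. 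Hence $T(\xi,\eta,\chi)=T(\xi,\chi,\eta)$ whenever $\chi\in\mathcal{C}_c^\infty(\Omega)$. Combined with the trivial $T(\xi,\eta,\chi)=T(\eta,\xi,\chi)$, and noting that in each comparison the compactly supported function can be kept in the third slot, these two transpositions generate $S_3$, so all six permutations of $T(\xi,\eta,\chi)$ coincide as soon as one of $\xi,\eta,\chi$ belongs to $\mathcal{C}_c^\infty(\Omega)$; in particular $\int_\Omega[\xi,\eta]\chi=\int_\Omega[\chi,\xi]\eta=\int_\Omega[\eta,\chi]\xi$.

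Finally I would pass to the limit. Given $\xi,\eta\in H^2(\Omega)$ and (say) $\chi\in H_0^2(\Omega)$, pick $\xi_n,\eta_n\in\mathcal{C}^\infty(\overline\Omega)$ and $\chi_n\in\mathcal{C}_c^\infty(\Omega)$ converging to $\xi,\eta,\chi$ in $H^2(\Omega)$; this is possible because $\Omega$ is bounded Lipschitz, so $\mathcal{C}^\infty(\overline\Omega)$ is dense in $H^2(\Omega)$, and by the very definition of $H_0^2(\Omega)$. Applying the permutation symmetry to $(\xi_n,\eta_n,\chi_n)$ and using the continuity of $T$ established above, one obtains \eqref{eq_sympropCiarlet}; the cases in which the $H_0^2$‑function is $\xi$ or $\eta$ are handled identically, approximating that argument in $\mathcal{C}_c^\infty(\Omega)$. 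The only points requiring care are the bookkeeping of boundary terms in the two integrations by parts (all annihilated by compact support) and the use of $H^2\hookrightarrow L^\infty$ in dimension two to make $T$ continuous — which is exactly why the statement is planar — and neither presents a genuine obstacle.
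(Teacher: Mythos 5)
Your argument is correct and complete. Note, however, that the paper does not prove this lemma at all: it is imported verbatim from Ciarlet \cite{ciarlet97}, so there is no in-paper proof to compare against. Your route is the standard one and is essentially Ciarlet's: write $[\xi,\eta]=\Dive\big(\cof(\nabla^2\xi)\nabla\eta\big)$ via the planar Piola identity $\Dive(\cof(\nabla^2 w))=0$, integrate by parts twice for smooth data with the third function compactly supported, use the pointwise symmetry $[\xi,\eta]=[\eta,\xi]$ to generate all permutations (your bookkeeping that each $(2,3)$-swap can be arranged so that the compactly supported function sits in slot $2$ or $3$ does check out), and then conclude by density, with the trilinear form made continuous on $H^2(\Omega)^3$ through $\|[\xi,\eta]\|_{L^1}\le\|\nabla^2\xi\|_{L^2}\|\nabla^2\eta\|_{L^2}$ and the two-dimensional embedding $H^2(\Omega)\hookrightarrow L^\infty(\Omega)$ on a bounded Lipschitz domain. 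This is also the same mechanism the authors use in their proof of the companion Lemma~\ref{Lemma:2410171413}, where the identity \eqref{eq_Div_sigma} replaces your cofactor identity and the vanishing of the boundary terms is obtained from the affine trace of $\xi$ rather than from compact support; so your proof fits seamlessly with the paper's toolkit, and I see no gap in it.
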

We prove that a slightly weaker version of symmetry property \eqref{eq_sympropCiarlet} holds when one gives up the request that one of the functions be in $H^2_0(\Omega)$, provided that it is in $H^4(\Omega)$ and has affine trace on the boundary $\partial\Omega$.
\begin{lemma}\label{Lemma:2410171413}
Let $\Omega \subset \mathbb{R}^2$ be a bounded, connected, open set with $\Ccal^2$ boundary $\partial\Omega$. 
Let $\eta,\chi\in H^2(\Omega)$ and let $\xi \in H^{4}(\Omega)$ be such that $\xi$ is affine on $\partial \Omega$. %, with possibly different affine constants on the different connected components of $\partial \Omega$.
Then %the following symmetry property holds
\begin{equation}\label{eq_C3}
\int_{\Omega} [\xi, \eta](x)\chi(x)\,\ud x= 
\int_{\Omega} [\xi, \chi](x)\eta(x)\,\ud x.
\end{equation}
\end{lemma}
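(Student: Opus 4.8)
The plan is to prove the identity by a density and integration-by-parts argument, exploiting the extra regularity $\xi\in H^4(\Omega)$ together with the affine boundary condition to neutralize the boundary contributions that would otherwise obstruct symmetry. First I would recall the classical identity (a consequence of the divergence structure of the cofactor; see e.g.~\cite[Theorem~5.8-2]{ciarlet97}) that for smooth functions
$$
[\xi,\eta]\,\chi-[\xi,\chi]\,\eta=\operatorname{div}\big(\text{(bilinear first-order expression in }\nabla\eta,\nabla\chi\text{ with coefficients in }\nabla^2\xi)\big),
$$
more precisely that $[\xi,\eta]=\partial_i\big((\cof\nabla^2\xi)_{ij}\partial_j\eta\big)$ since $\partial_i(\cof\nabla^2\xi)_{ij}=0$ (the Piola identity / null-Lagrangian property), valid pointwise when $\xi\in\mathcal{C}^3$. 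Multiplying by $\chi$, integrating, and moving one derivative off $\eta$ onto $\chi$ produces
$$
\int_\Omega [\xi,\eta]\chi\,\ud x=\int_{\partial\Omega}\chi\,(\cof\nabla^2\xi)_{ij}\partial_j\eta\,n_i\,\ud\Huno-\int_\Omega (\cof\nabla^2\xi)_{ij}\partial_j\eta\,\partial_i\chi\,\ud x,
$$
and the bulk term on the right is manifestly symmetric in $\eta$ and $\chi$. Hence the whole content of \eqref{eq_C3} is to show that the boundary term vanishes, or at least is symmetric in $\eta,\chi$.

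The key step is therefore the boundary analysis. Since $\xi$ is affine on $\partial\Omega$, its tangential second derivatives along $\partial\Omega$ vanish, i.e.~$\nabla^2\xi\,t=0$ on $\partial\Omega$ in the sense already used in the paper (cf.~\eqref{eq_hesstang0} and \cite[Proposition~A.2]{CDLM2024}). I would decompose $\nabla\eta$ on $\partial\Omega$ into its normal and tangential parts, $\nabla\eta=(\partial_n\eta)\,n+(\partial_t\eta)\,t$, and similarly note that the boundary integrand is a contraction of $\cof\nabla^2\xi$ against $\nabla\eta\otimes n$ (up to the factor $\chi$). The structural fact I need is that the bilinear form $(a,b)\mapsto a^\top(\cof\nabla^2\xi)\,b$ restricted to $\partial\Omega$, with one slot equal to $n$, only ``sees'' the tangential component of $\nabla^2\xi$ — concretely, using $\cof M\,n\cdot n=(M t)\cdot t$ and $\cof M\,n\cdot t=-(M t)\cdot n$ for $2\times2$ symmetric $M$ (the same algebraic identities underlying \eqref{eq_202409051159}), the condition $\nabla^2\xi\,t=0$ forces $(\cof\nabla^2\xi)\,n=0$ on $\partial\Omega$. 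Consequently the boundary term vanishes identically, and \eqref{eq_C3} follows. I expect this algebraic reduction — carefully tracking which components of $\nabla^2\xi$ survive on $\partial\Omega$ — to be the main (though not deep) obstacle; the rest is bookkeeping.

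Finally, I would handle the regularity/density issue: the pointwise identity $[\xi,\eta]=\partial_i((\cof\nabla^2\xi)_{ij}\partial_j\eta)$ and the integration by parts above are justified when $\xi\in\mathcal{C}^3(\overline\Omega)$, $\eta,\chi\in\mathcal{C}^2(\overline\Omega)$. For the stated regularity I would approximate: since $\xi\in H^4(\Omega)$ with $\partial\Omega\in\mathcal{C}^2$, one has $\nabla^2\xi\in H^2(\Omega)\hookrightarrow\mathcal{C}^0(\overline\Omega)$ and in particular the boundary trace $\nabla^2\xi|_{\partial\Omega}$ is well-defined and continuous, so the condition $\nabla^2\xi\,t=0$ on $\partial\Omega$ makes classical sense; approximating $\eta,\chi$ in $H^2(\Omega)$ by smooth functions and $\xi$ in $H^4(\Omega)$ by smooth functions affine on $\partial\Omega$ (e.g.~by mollifying after subtracting a fixed affine extension of the boundary trace, or invoking the regularity-theory density already used in the paper), and passing to the limit using the continuity of all the bilinear pairings involved in $L^2$ and $H^{1/2}(\partial\Omega)$, yields \eqref{eq_C3} in full generality. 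Alternatively, one can avoid density entirely by running the integration by parts directly in the Sobolev setting, using that $(\cof\nabla^2\xi)_{ij}\in H^2\subset L^\infty$, $\partial_j\eta\in H^1$, and that $\operatorname{div}(\cof\nabla^2\xi)=0$ holds in $L^2$; I would present whichever route is shorter once the boundary vanishing is in hand.
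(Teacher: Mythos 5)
Your proposal is correct and follows essentially the same route as the paper: integrate by parts using the divergence-free structure of $\cof(\nabla^2\xi)$, observe that the remaining bulk term $-\int_\Omega\langle\cof(\nabla^2\xi)\nabla\chi,\nabla\eta\rangle\,\ud x$ is symmetric in $\eta,\chi$, and kill the boundary term via the affine boundary data, which (through \cite[Proposition~A.2]{CDLM2024} and the algebra $\cof(M)=\Pi^\top M\Pi$, $\Pi n=-t$) gives $\cof(\nabla^2\xi)\,n=0$ on $\partial\Omega$. Your extra remarks on density and on $\nabla^2\xi\in H^2\hookrightarrow\mathcal{C}^0(\overline\Omega)$ only make explicit the regularity bookkeeping the paper leaves implicit.
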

\begin{proof} 
By definition of the Monge-Amp\`{e}re operator,
\begin{equation}\label{eq_MAsym}
\begin{split}
\int_{\Omega} [\xi, \eta]\chi\,\ud x=& \int_{\Omega} \chi\, \cof(\nabla^2 \xi) : \nabla^2 \eta\,\ud x 
=\int_{\partial \Omega} \chi \langle\cof(\nabla^2 \xi)n, \nabla \eta\rangle \,\ud\Huno \\
&\, - \int_{\Omega} \big\langle \Div\big(\chi \cof(\nabla^2 \xi)\big), \nabla \eta\big\rangle \,\ud x 
=-\int_{\Omega} \langle \cof(\nabla^2\xi) \nabla\chi,\nabla \eta\rangle\,\ud x,
\end{split}
\end{equation}
where the second line follows from integration by parts, and the third line is a consequence of \cite[Proposition~A.2]{CDLM2024} applied to $\xi$ (upon noticing that $\cof(M)=\Pi^\top M\Pi$ for any $M\in\mathbb{R}^{2\times2}_{\sym}$ and that $\Pi n=-t$) and \eqref{eq_Div_sigma}.
Realizing that the right-hand side of \eqref{eq_MAsym} is a symmetric expression in $\eta$ and $\chi$, \eqref{eq_C3} follows.
\end{proof}

\enlargethispage{20pt}

\noindent\textbf{Acknowledgments.} PC's work is supported by JSPS KAKENHI
Grant-in-Aid for Scientific Research (C) JP24K06797.
PC holds an honorary appointment at La Trobe University.
EF is supported by JST SPRING, Grant Number JPMJSP2136.
MM warmly thanks   the Institute of Mathematics for Industry, an International Joint Usage and Research Center located in Kyushu University, where part of the work contained in this paper was carried out.
PC and MM are members of the \emph{Gruppo Nazionale per l'Analisi Matematica, la Probabilità e le loro Applicazioni (GNAMPA)} of the \emph{Istituto Nazionale di Alta Matematica (INdAM)}. MM acknowledges partial support from the MUR grant \emph{Geometric Analytic Methods for PDEs and Applications} (2022SLTHCE cup E53D23005880006). This manuscript reflects only the authors’ views and opinions and the Italian Ministry cannot be considered responsible for them.

\noindent\textbf{Author Contributions.} All authors developed the model, wrote the main manuscript text, and reviewed the manuscript.

\noindent\textbf{Data Availability.} No datasets were generated or analyzed during the current study.

\noindent\textbf{Competing Interests.} The authors declare no competing interests.

\vskip2pc

\bibliographystyle{siam}
\bibliography{refsPatrick}

%\noindent {\bf Please follow the coding for references as shown below.}
%
%\begin{thebibliography}{9}
%
%\bibitem{1} Allwood JM, Cullen JM. 2011 \textit{Sustainable materials:  with both eyes open}.
%Cambridge, UK: UIT Cambridge. See \href{http://www.withbotheyesopen.com}{http://www.withbotheyesopen.com}.
%
%\bibitem{2}  MacKay DJC. 2008  \textit{Sustainable energy:  without the hot air}.
% Cambridge, UK: UIT Cambridge. See \href{http://www.withouthotair.com}{http://www.withouthotair.com}.
%
%\bibitem{3} Gallman PG. 2011  \textit{Green alternatives and national energy strategy: the facts
% behind the headlines}.  Baltimore,\ MD: Johns Hopkins University Press.
%
%\bibitem{4} MacKay DJC. 2013.  Solar energy in the context of energy use, energy transportation, and
% energy storage. \textit{Proc. R. Soc. A} \textbf{371}.
%
%\end{thebibliography}
%
%\noindent If maintaining .bib file for references, then please use "RS.bst" to generate the references.
%
%\noindent Example:
%
%\verb+\bibliographystyle{RS}+ %%%% .BST file
%
%\verb+\bibliography{sample}+ %%%%% .Bib file

\end{document}